\newtheorem*{ack*}{Acknowledgement}
\title{Algebraic subdivision in simplicially controlled categories}
\author{Spiros Adams-Florou}
\begin{document}

\begin{abstract}
We generalise the notion of subdivision of a finite-dimensional locally finite simplicial complex $X$ to geometric algebra, namely to the simplicially controlled categories $\A^*(X)$, $\A_*(X)$ of Ranicki and Weiss. We prove a squeezing result: a bounded chain equivalence of sufficiently algebraically subdivided chain complexes can be squeezed to a simplicially controlled chain equivalence of the unsubdivided chain complexes. Giving $X\times\R$ a bounded triangulation measured in the open cone $O(X_+)$ we use algebraic subdivision to define a functor $\mathrm{``}-\otimes\Z\mathrm{''}:\BB(\A(X))\to \BB(\A(X\times\R))$ that corresponds to tensoring with the simplicial chain complex of $\Z$ and algebraically subdividing to be bounded over $O(X_+)$. We show that $C\simeq 0 \in \BB(\A(X))$ if and only if $\mathrm{``}C\otimes\Z\mathrm{''}$ is boundedly chain contractible over $O(X_+)$. These results have applications to Poincar\'{e} duality and homology manifold detection as a finite-dimensional locally finite simplicial complex $X$ is a homology manifold if and only if it has $X$-controlled Poincar\'{e} duality. We prove a Poincar\'e duality squeezing theorem that such a space $X$ with sufficiently controlled Poincar\'{e} duality must have $X$-controlled Poincar\'{e} duality and we prove a Poincar\'{e} duality splitting theorem with the consequence that $X$ is a homology manifold if and only if $X\times\R$ has bounded Poincar\'{e} duality over $O(X_+)$.
\end{abstract}

\maketitle 

\section*{Introduction}
It is well known that a global homotopy equivalence need not be local - not all homotopy equivalences are hereditary. A classical theorem of Vietoris' can be stated as 
\begin{thm}[Vietoris '27 \cite{Vietmapthm}]
Let $f:Y\to X$ be a surjective map between compact metric spaces. If $f$ has acyclic point inverses, then $f$ induces isomorphism on homology.
\end{thm}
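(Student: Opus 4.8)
The plan is to prove the statement via \v{C}ech homology. For the compact metric spaces at hand there is a natural transformation from singular to \v{C}ech homology, and the point is that the failure of $f_*$ to be an isomorphism on singular homology is a non-local phenomenon that evaporates once one passes to a theory built from \emph{finite} open covers, which only ``sees'' $f$ at finite scales --- scales at which an acyclic point inverse is indistinguishable from a point. I would first record the easy half: for a finite open cover $\mathcal U$ of $X$ let $f^{*}\mathcal U = \{\,f^{-1}(U) : U \in \mathcal U\,\}$, a finite open cover of $Y$; since $f$ is onto, $\bigcap_i f^{-1}(U_i) = f^{-1}\!\big(\bigcap_i U_i\big)$ is nonempty precisely when $\bigcap_i U_i$ is, and $f^{-1}(U)=f^{-1}(U')$ forces $U=U'$, so the nerves $N(f^{*}\mathcal U)$ and $N(\mathcal U)$ are canonically isomorphic with $f$ inducing this isomorphism. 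Passing to the inverse limit over $\mathcal U$ yields an isomorphism $\varprojlim_{\mathcal U} H_*\big(N(f^{*}\mathcal U)\big) \xrightarrow{\ \cong\ } \check H_*(X)$ factoring $f_*$.

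The real content is that the pulled-back covers are cofinal, up to homology, among \emph{all} finite open covers of $Y$, i.e.\ that $\varprojlim_{\mathcal U} H_*\big(N(f^{*}\mathcal U)\big) \to \check H_*(Y)$ is also an isomorphism; this is where acyclicity of the fibres is used. A given finite open cover $\mathcal W$ of $Y$ cannot in general be refined by a pulled-back cover, so I would compare the two cofinal systems by an \emph{acyclic carrier} argument: for a sufficiently fine $\mathcal U$ the set $f^{-1}(U)$, $U\in\mathcal U$, has $\check H_{>0}=0$ and is connected, because $f^{-1}(x)$ is acyclic (hence connected), $\bigcap_{U\ni x} f^{-1}(U) = f^{-1}(x)$, and \v{C}ech cohomology is continuous. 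Assembling this over the simplices of $N(\mathcal U)$ produces acyclic carriers between the Vietoris chain complex of $(Y, f^{*}\mathcal U)$, suitably refined past $\mathcal W$, and that of $(X,\mathcal U)$, in both directions, giving the isomorphism in the limit. Equivalently, and more economically, I would invoke the Leray spectral sequence of the closed (hence proper) surjection $f$: base change identifies the stalk $\big(R^q f_* \Z\big)_x$ with $\check H^q(f^{-1}(x);\Z)$, which is $\Z$ for $q=0$ and $0$ for $q>0$ by acyclicity and connectedness of the fibres, so $E_2^{p,q} = \check H^p(X; R^q f_*\Z)$ is concentrated on the line $q=0$, equal there to $\check H^p(X;\Z)$; the spectral sequence collapses and its edge homomorphism $f^*$ is an isomorphism. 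The homology assertion then follows from this together with the relevant universal-coefficient/duality comparison for the theory in which the theorem is stated.

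The step I expect to be the main obstacle is the passage to the limit, concretely the claim that $f^{-1}(U)$ becomes acyclic and connected as $U$ shrinks. This rests on (i) continuity of \v{C}ech cohomology, $\check H^*\big(\bigcap_\alpha A_\alpha\big) = \varinjlim_\alpha \check H^*(A_\alpha)$ for a decreasing net of compacta --- where compactness and metrizability of $X$ and $Y$ are genuinely used --- and (ii) closedness of $f$, which licenses the base-change identification of the stalks. One must also fix the meaning of ``acyclic point inverse'' as $\check H_*(f^{-1}(x)) \cong \check H_*(\mathrm{pt})$, so that in particular $f^{-1}(x)$ is nonempty and connected, and, if the singular-homology conclusion is really wanted, add a local-contractibility hypothesis ensuring that singular and \v{C}ech homology agree on $X$ and $Y$ (automatic in the finite-dimensional simplicial setting of this paper). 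Granted these standard but indispensable point-set inputs, the spectral-sequence collapse --- and hence the theorem --- is immediate.
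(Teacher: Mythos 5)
The paper does not prove this theorem: it is quoted as a classical result of Vietoris with a citation, so there is no in-paper argument to compare yours against. Judged on its own, your proposal is a correct outline of the standard \v{C}ech-theoretic proof (essentially Begle's argument, or equivalently the Leray spectral sequence formulation of the Vietoris--Begle mapping theorem), and you have correctly located where the hypotheses of compactness, metrizability and closedness of $f$ enter. Two points deserve sharpening. First, the assertion that for sufficiently fine $\mathcal U$ the sets $f^{-1}(U)$ themselves satisfy $\check H_{>0}=0$ is stronger than what continuity of \v{C}ech theory delivers: continuity gives $\varinjlim_{U\ni x}\check H^{*}(f^{-1}(\overline U))=\check H^{*}(f^{-1}(x))$, i.e.\ every positive-dimensional class dies after restricting to a \emph{smaller} $f^{-1}(V)$, not that the groups vanish at any finite stage. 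The carrier argument therefore has to be run in the ``movability'' form --- classes are killed by inclusions, iterated once per dimension, with compactness of $X$ supplying uniform moduli --- exactly as in Begle's proof; you flag this as the main obstacle, which is the right diagnosis, but the statement as written would need this repair. Second, the Leray spectral sequence route proves the \emph{cohomology} isomorphism, whereas the theorem as stated concerns homology; since \v{C}ech homology with $\Z$ coefficients is an inverse limit and fails exactness, the passage back is not a routine universal-coefficient step. The cleaner way to get the homology statement is your first route: the carrier comparison yields a pro-isomorphism of the inverse systems $\{H_*(N(f^{*}\mathcal U))\}$ and $\{H_*(N(\mathcal W))\}$, and applying $\varprojlim$ to a pro-isomorphism gives the \v{C}ech (Vietoris) homology isomorphism directly. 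Finally, your remark that the singular-homology version requires extra hypotheses is correct --- the theorem as Vietoris proved it is a statement about Vietoris/\v{C}ech homology, which is how it should be read here.
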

Since Vietoris, many people have studied surjective maps with point inverses that are well behaved in some sense, whether they be contractible, acyclic, cell-like etc. The idea is that we weaken the condition of a map being a homeomorphism where all the point inverses are precisely points to the condition where they merely have the homotopy or homology of points.

The approach of controlled topology, developed by T. Chapman, S. Ferry and F. Quinn, is to have each space equipped with a control map to a metric space with which we are able to measure distances. Typical theorems involve a concept called squeezing, where one shows that if the size of some geometric obstruction is sufficiently small, then it can be `squeezed' arbitrarily small.

The approach of bounded topology is again to have a control map, this time necessarily to an unbounded metric space $M$, but rather than focus on the arbitrarily small to focus instead on things that are bounded over $M$. An advantage of this perspective is the functoriality obtained from not having to count epsilons. 

Since the advent of controlled and bounded topology people have been studying the relationship between the two. An idea of Pedersen and Weibel is to use the open cone construction to characterise when a map $f:X\to Y$ is an $\ep$-controlled homotopy equivalence over $M$. For $M$ a subset of $S^N$ for some $N$, the open cone $O(M_+)$ is the infinite open cone in $\R^{N+1}$ obtained by taking all the rays out from the origin through points $m\in M_+ = M\cup \{\mathrm{pt}\}$ where we have added a point to $M$ for technical reasons. 

In this document we work in the setting of finite-dimensional locally finite simplicial complexes. In \cite{toppaper} a proof of the following is presented:

\begin{thm}
Let $f:X\to Y$ be a simplicial map of finite-dimensional locally finite simplicial complexes. Then the following are equivalent:
\begin{enumerate}
 \item $f$ has contractible point inverses,
 \item $f$ is a an $\ep$-controlled homotopy equivalence measured in $Y$, for all $\ep>0$,
 \item $f\times\id_\R: X\times\R \to Y\times\R$ is a bounded homotopy equivalence measured in the open cone $O(Y_+)$.
\end{enumerate}
\end{thm}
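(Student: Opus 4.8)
The plan is to prove the cycle $(1)\Rightarrow(2)\Rightarrow(3)\Rightarrow(1)$, with the genuinely geometric content in the first arrow, the open cone bookkeeping of Pedersen and Weibel in the second, and a squeezing argument at infinity in the third. For $(1)\Rightarrow(2)$ I would show that a simplicial map with contractible point inverses is cell-like and then manufacture, for each prescribed $\ep>0$, an $\ep$-controlled homotopy inverse out of the simplicial data alone. First iterate barycentric subdivision of $Y$, pulling the subdivision back to $X$, until the mesh of $Y$ falls below $\ep$; the subdivided map is still simplicial with contractible point inverses. Next observe that for a simplicial map the preimage $f^{-1}(\mathrm{st}(\sigma))$ of the open star of a simplex $\sigma$ deformation retracts onto a point inverse and is hence contractible. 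Now build a section up to homotopy $g\colon Y\to X$ by induction over the skeleta of $Y$: send each vertex $v$ into $f^{-1}(v)$, and extend over a simplex $\sigma$ using that the obstruction --- a map of $\partial\sigma$ into $f^{-1}(\mathrm{st}(\sigma))$ --- is null because that space is contractible. Running the same contractibility over the skeleta of $X$ produces homotopies $fg\simeq\id_Y$ and $gf\simeq\id_X$ whose tracks lie in preimages of stars and so have diameter $O(\ep)$ measured in $Y$; since $\ep$ was arbitrary this is $(2)$.

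For $(2)\Rightarrow(3)$ I would use that a subset of $O(Y_+)$ at cone radius $t$ whose radial projection to $Y$ has diameter $\delta$ has diameter comparable to $t\delta$ in $O(Y_+)$, so that the $\R$-coordinate of $X\times\R$ plays the role of the cone radius. Choose a sequence $\ep_n\to 0$, $\ep_n$-controlled homotopy inverses $g_n$ to $f$, and $\ep_n$-controlled homotopies between consecutive $g_n$ --- available because a homotopy inverse is unique up to controlled homotopy --- and then patch the $g_n$ together along $X\times\R$, using $g_n$ near the slab $X\times\{n\}$ and interpolating by the chosen homotopies in between. Because $t\cdot\ep_{n(t)}$ stays bounded as $t\to\infty$, the resulting homotopy inverse to $f\times\id_\R$, together with the homotopies to the two identities, is bounded when measured in $O(Y_+)$, which is $(3)$.

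For $(3)\Rightarrow(1)$ I would reverse the open cone correspondence to squeeze: restricting the given bounded homotopy equivalence over $O(Y_+)$ to a slab $X\times[t,t+1]$ and pushing the bounded homotopies down to $Y$ scales their tracks by $O(1/t)$, so $f$ is an $\ep$-controlled homotopy equivalence over $Y$ for every $\ep>0$; thus $(2)$ holds, and incidentally $(3)\Rightarrow(2)$ is established on its own. To conclude $(2)\Rightarrow(1)$, fix a point $y\in Y$ and a small simplicial neighbourhood $N$ of $f^{-1}(y)$ that deformation retracts onto $f^{-1}(y)$, and apply $(2)$ with $\ep$ small enough that the $\ep$-controlled homotopy $g_\ep f\simeq\id_X$ has all tracks issuing from $f^{-1}(y)$ contained in $N$. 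Since $g_\ep f$ is constant on $f^{-1}(y)$, this homotopy exhibits the inclusion $f^{-1}(y)\hookrightarrow N$ as null-homotopic in $N$; composing with the retraction $N\to f^{-1}(y)$ then shows $\id_{f^{-1}(y)}$ is null-homotopic, so $f^{-1}(y)$ is contractible.

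I expect $(1)\Rightarrow(2)$ to be the main obstacle. The homotopy inverse and all the homotopies have to be produced from the simplicial structure, and one must check that iterated barycentric subdivision genuinely shrinks every relevant track --- with no residual additive constant --- so that the control really is uniform in $\ep$; getting this estimate clean, rather than merely ``small for fine subdivisions'', is the crux. A secondary difficulty is the infinite patching in $(2)\Rightarrow(3)$: the countably many controlled homotopy inverses and the interpolating homotopies must be assembled compatibly and then shown to remain bounded over $O(Y_+)$, and keeping careful track of the uniqueness-up-to-controlled-homotopy estimates along the way is what makes that step go through.
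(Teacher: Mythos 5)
This theorem is not proved in the paper at all: the text explicitly defers it to \cite{toppaper} (and to \cite{plmf} for $(1)\Leftrightarrow(2)$), so there is no in-paper argument to compare against. Your overall architecture --- the cycle $(1)\Rightarrow(2)\Rightarrow(3)\Rightarrow(1)$, the key lemma that $f^{-1}(\mathrm{st}(\sigma))$ deformation retracts onto a point inverse, the patching of $\ep_n$-controlled inverses along $X\times\R$ with $t\cdot\ep_{n(t)}$ bounded, and the slab-restriction/rescaling argument for $(3)\Rightarrow(2)\Rightarrow(1)$ --- is the standard and essentially correct route, and mirrors the ``translate towards $-\infty$ to rescale the bound'' philosophy the paper uses algebraically in Sections 6--7.

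There is, however, a concrete gap in the step you yourself identify as the crux. In the skeletal induction for $(1)\Rightarrow(2)$ you extend $g$ over a simplex $\sigma$ by claiming the obstruction is a map $\partial\sigma\to f^{-1}(\mathrm{st}(\sigma))$. The inclusion goes the wrong way: for $\tau\leqslant\sigma$ one has $\mathrm{st}(\sigma)\subset\mathrm{st}(\tau)$, not the reverse, so by your inductive hypothesis $g(\partial\sigma)$ lies only in $\bigcup_{\tau<\sigma}f^{-1}(\mathrm{st}(\tau))$; indeed a vertex $v<\sigma$ has $g(v)\in f^{-1}(v)$ and $v\notin\mathrm{st}(\sigma)$, so $g(\partial\sigma)$ is \emph{never} contained in $f^{-1}(\mathrm{st}(\sigma))$ when $|\sigma|\geqslant 1$, and the union it does lie in need not be contractible. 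The standard repair is to run the induction over the dual cells of $Y$ in order of increasing dimension (equivalently, descending induction on $|\sigma|$): since $\partial D(\sigma,Y)=\bigcup_{\rho>\sigma}D(\rho,Y)$ and $\mathrm{st}(\rho)\subset\mathrm{st}(\sigma)$ for $\rho\geqslant\sigma$, the map $g|_{\partial D(\sigma,Y)}$ genuinely lands in the contractible set $f^{-1}(\mathrm{st}(\sigma))$ and extends over the cone $D(\sigma,Y)=\widehat{\sigma}*\partial D(\sigma,Y)$; the same device is needed for the homotopies $gf\simeq\id_X$ and $fg\simeq\id_Y$. With that replacement the control estimate (tracks contained in preimages of stars, hence of diameter a bounded multiple of $\mesh(Y)$, killed by subdividing) goes through as you intend; the rest of your outline is sound modulo routine bookkeeping.
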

\noindent Conditions $(1)$ and $(2)$ being equivalent is essentially well known, particularly for the case of finite simplicial complexes. (see \cite{plmf} 2.18 for example.)

The goal of this paper is to develop algebra to relate the three notions of the above theorem, with an application to Poincar\'{e} duality in mind. Using the simplicial algebraic categories $\brc{\A^*(X)}{\A_*(X)}$ of Ranicki (c.f. \cite{bluebk}) defined for a simplicial complex $X$, we prove an algebraic generalisation of this theorem:

\begin{thm}\label{above}
Let $X$ be a finite-dimensional locally finite simplicial complex. Let $C\in \BB(\A(X))$ where $\A(X)$ denotes either $\A^*(X)$ or $\A_*(X)$ and $\BB(\A)$ denotes the category of finite chain complexes in $\A$. Then the following are equivalent:
\begin{enumerate}
 \item $C(\sigma)\simeq 0 \in \A$ for all $\sigma \in X$, i.e. $C$ is locally contractible over each simplex in $X$, 
 \item $C\simeq 0 \in \A\hspace{-1mm}*\hspace{-1mm}(X)$, i.e. $C$ is globally contractible over $X$,
 \item ``$C\otimes\Z$''$\in \BB(\A(X\times \R))$ is chain contractible in $\GG_{X\times\R}(\A)$ with finite bound measured in $O(X_+)$\footnote{Here we give $X\times\R$ a bounded triangulation when measured in the open cone.}
\end{enumerate}
where $\GG_{X\times\R}(\A)$ is the $X$-graded category of \cite{bluebk}.
\end{thm}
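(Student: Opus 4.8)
The plan is to establish the cycle $(1)\Leftrightarrow(2)$ directly inside the simplicial category $\A(X)$, and then $(2)\Rightarrow(3)$ and $(3)\Rightarrow(2)$ using the algebraic subdivision machinery together with the squeezing theorem. The implication $(2)\Rightarrow(1)$ is immediate: a chain contraction of $C$ in $\A(X)$ has components respecting the $X$-grading, hence induces a chain contraction of each local complex $C(\sigma)$ in $\A$. For $(1)\Rightarrow(2)$ I would filter $C$ by the skeleta $X^{(0)}\subseteq\cdots\subseteq X^{(\dim X)}=X$ --- a filtration by subcomplexes or by quotient complexes according to the variance of $\A(X)$ --- whose successive subquotients are the direct sums $\bigoplus_{\dim\sigma=k}C(\sigma)$, contractible by hypothesis. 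Beginning with a chain contraction over $X^{(0)}$ and extending over one skeleton at a time, invoking at each stage the explicit formula that builds a chain contraction of an extension of a contractible chain complex by a contractible one, produces a chain contraction of $C$; finite-dimensionality of $X$ makes the induction stop after $\dim X+1$ steps, so the resulting contraction has finite radius and genuinely lies in $\A(X)$.

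For $(2)\Rightarrow(3)$, recall that ``$C\otimes\Z$'' is the algebraic subdivision, relative to the chosen $O(X_+)$-bounded triangulation of $X\times\R$, of $C\otimes\Delta_*(\R)$, where $\Delta_*(\R)$ is the simplicial chain complex of the line. Neither obvious candidate for a chain contraction is bounded over $O(X_+)$: $\Gamma\otimes\id$ has $X$-propagation $r$, which at cone-radius $t$ costs cone-distance of order $rt$, while the contraction that collapses $\Delta_*(\R)$ to a point has unbounded propagation along $\R$. Instead I would use $(1)$ (equivalently $(2)$) together with the fact --- which should follow from the properties of algebraic subdivision set up earlier --- that local contractibility passes to every iterated subdivision $\mathrm{Sd}^n X$, and assemble the resulting radius-zero local contractions into a chain contraction of ``$C\otimes\Z$'' by a telescoping argument along the $\R$-direction: each time the cone-radius crosses a dyadic window one advances the partial contraction by one skeleton of the subdivision currently in force, cycling through $k=0,\dots,\dim X$ and, in tandem, through the subdivision-comparison maps $\mathrm{Sd}^n(C)\to\mathrm{Sd}^{n+1}(C)$, so that the propagation added at each step is only a bounded number of simplices of the current subdivision --- and such simplices have $O(X_+)$-bounded size by construction. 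Verifying that all of these pieces, including the interpolations between consecutive windows, stay simultaneously bounded over $O(X_+)$ is the step I expect to be the main obstacle of the theorem.

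For $(3)\Rightarrow(2)$, which should be the more routine direction given the squeezing theorem, suppose ``$C\otimes\Z$'' has a chain contraction $H$ in $\GG_{X\times\R}(\A)$ of finite bound $b$ measured in $O(X_+)$. Choose a slab of $X\times\R$ over which the $O(X_+)$-bounded triangulation is a genuine product $\mathrm{Sd}^n X\times I$ with $I$ a triangulated interval of large cone-radius $r$, so that ``$C\otimes\Z$'' restricts there to $\mathrm{Sd}^n(C)\otimes\Delta_*(I)$; shrinking the slab by $b$ to absorb boundary effects, $H$ restricts to a chain contraction of $\mathrm{Sd}^n(C)\otimes\Delta_*(I)$ of bound $\le b$. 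Composing with chain maps between $\mathrm{Sd}^n(C)$ and $\mathrm{Sd}^n(C)\otimes\Delta_*(I)$ that witness the contractibility of the $\Delta_*(I)$-factor --- maps which move nothing in the $X$-direction --- yields a chain contraction of $\mathrm{Sd}^n(C)$ over $X$ of propagation at most $b/r$ in the metric of $X$. Taking slabs at larger and larger cone-radii (so that $n\to\infty$) gives, for every $\ep>0$, an $\ep$-controlled chain contraction of a sufficiently iterated subdivision $\mathrm{Sd}^n C$; the squeezing theorem then squeezes such a sufficiently subdivided and sufficiently controlled contraction down to a chain contraction of $C$ in $\A(X)$ itself, which is $(2)$. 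Together with $(1)\Leftrightarrow(2)$ this closes the cycle and proves the theorem.
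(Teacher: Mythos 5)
Your treatment of $(1)\Leftrightarrow(2)$ and of $(3)\Rightarrow(2)$ is essentially sound and close to the paper's. For $(1)\Rightarrow(2)$ the paper writes down a closed formula for the global contraction in terms of the local ones (Proposition \ref{chcont}); your skeletal induction is the unrolled version of the same formula, and it works because the splitting maps of each skeletal extension are diagonal and hence stay in $\A(X)$. For $(3)\Rightarrow(2)$ the paper likewise restricts the bounded contraction to a single slice $X\times\{v_j\}$ with $v_j-v_{j-1}$ larger than the bound $B$ (so that the restriction genuinely is a contraction of the subdivided complex carried by that slice), projects to $X$ to obtain control $B/v_j<\ep$, and applies the Squeezing Theorem \ref{triangulise}; your slab version is the same argument.

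The genuine gap is in $(2)\Rightarrow(3)$, and it stems from misreading Definition \ref{algcrosswithR}. The object ``$C\otimes\Z$'' is \emph{not} an algebraic subdivision of $C\otimes\Delta_*(\R)$: it lies in $\A(t^1(X\times\{v_i\}_{i\in\Z}))$, i.e.\ it is supported on the discrete family of slices $X\times\{v_i\}$, being $Sd^j\,C$ on the slice $v_i$ (with $j=\max\{0,i-1\}$), with no chains interpolating between slices. There is therefore no $\R$-direction to contract and no telescope to build. The correct (and short) argument is: by Lemma \ref{Lem:SubcontIsCont} each $Sd^j\,C$ is contractible in $\A(Sd^j\,X)$; any morphism of the simplicial category of a slice automatically has bound at most $\mesh_{O(X_+)}(t^1(X\times\R))<\infty$ (Remark \ref{makesmall}), uniformly in $i$ because the linear growth of the cone metric on the slice $X\times\{v_i\}$ is beaten by the mesh decay of $Sd^{i-1}\,X$; the direct sum of these slice-wise contractions is the required bounded contraction. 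Your dyadic-window telescoping construction is solving a difficulty that does not arise, and you yourself flag that you cannot verify its boundedness, so as written the implication $(2)\Rightarrow(3)$ is not established. (Even for the genuine product functor ``$-\otimes\R$'' of the closing remark of Section \ref{section:openconeandsplitting}, the right move is not to estimate $\Gamma\otimes\id$ directly but to note that the algebraically subdivided complex remains locally contractible, so Proposition \ref{chcont} produces a contraction living in the simplicial category of the bounded triangulation, which is bounded over $O(X_+)$ for free.)
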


Applying this to the algebraic mapping cone of a Poincar\'{e} duality chain equivalence we get the following corollary for Poincar\'{e} duality:

\begin{cor}\label{cor}
Let $X$ be a simplicial complex as in the statement of Theorem \ref{above}, then the following are equivalent:
\begin{enumerate}
 \item $X$ has $\ep$-controlled Poincar\'{e} duality for all $\ep>0$ measured in $X$. 
 \item $X\times\R$ has bounded Poincar\'{e} duality measured in $O(X_+)$. 
\end{enumerate}
\end{cor}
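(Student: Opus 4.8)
The plan is to encode Poincar\'e duality algebraically by a single chain complex and then quote Theorem~\ref{above} for it. Let $C_X\in\BB(\A(X))$ be the simplicial chain complex of $X$, viewed in the simplicially controlled category in the usual way, and for a chosen fundamental class let $\phi\colon C_X^{n-*}\to C_X$ be the cap-product chain map furnished by the symmetric construction on $X$ (here $n$ is the relevant dimension). Saying that $X$ has Poincar\'e duality, in any of the controlled or bounded senses, is by definition saying that $\phi$ is a chain equivalence in the corresponding category; let $E$ denote the algebraic mapping cone of $\phi$, an object of $\BB(\A(X))$. The dictionary I shall use throughout is the standard one: in each category in play, $\phi$ is a chain equivalence there if and only if $E$ is chain contractible there, with control or boundedness preserved in both directions up to the usual bounded distortion.

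First I would translate condition~(1) of Corollary~\ref{cor}. By the dictionary it says that $E$ is $\ep$-controlled chain contractible measured in $X$ for every $\ep>0$; invoking the Poincar\'e duality squeezing theorem --- equivalently, the equivalence of conditions $(1)$ and $(2)$ of Theorem~\ref{above} for the complex $E$ together with the passage, for a finite-dimensional locally finite complex, between $\ep$-controlled contractibility for all $\ep>0$ and $\A^*(X)$-contractibility --- this is equivalent to $E(\sigma)\simeq 0\in\A$ for all $\sigma\in X$, which is condition~(1) of Theorem~\ref{above} for $E$. Next I would translate condition~(2). The bounded triangulation of $X\times\R$ measured in $O(X_+)$ carries the fundamental class obtained by algebraically subdividing the product of the fundamental classes of $X$ and of $\R$, and by construction of the functor $\mathrm{``}-\otimes\Z\mathrm{''}$ the corresponding cap-product chain map is boundedly chain equivalent over $O(X_+)$ to ``$\phi\otimes\Z$''; since the functor commutes with formation of algebraic mapping cones up to bounded chain equivalence over $O(X_+)$, the mapping cone of the duality map of $X\times\R$ is, up to such equivalence, ``$E\otimes\Z$''. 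Hence condition~(2) of the corollary is equivalent to ``$E\otimes\Z$'' being boundedly chain contractible over $O(X_+)$, which is condition~(3) of Theorem~\ref{above} for $E$. In the opposite direction one checks that a bounded Poincar\'e duality structure on $X\times\R$ over $O(X_+)$ can be taken, without loss of generality, with fundamental class a subdivision of $[X]\times[\R]$, so that both translations are genuine equivalences.

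Granting these translations, the corollary follows at once: applying Theorem~\ref{above} to $C:=E$, its three conditions are equivalent, so condition~(1) of the corollary --- identified with Theorem~\ref{above}$(1)$, equivalently $(2)$, for $E$ --- is equivalent to condition~(2) of the corollary --- identified with Theorem~\ref{above}$(3)$ for $E$.

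I expect the main obstacle to be the interaction of $\mathrm{``}-\otimes\Z\mathrm{''}$ with Poincar\'e duality: one has to prove that algebraic subdivision and the tensor-with-$\Z$ functor send the symmetric (or quadratic) structure on $C_X$ to the symmetric structure on the chain complex of $X\times\R$ up to bounded chain equivalence over $O(X_+)$, and that forming the algebraic mapping cone of the duality chain map commutes, up to the same kind of equivalence, with that functor. This comes down to understanding the algebraic subdivision of a product symmetric Poincar\'e complex and to locating the product fundamental class inside the bounded triangulation of $X\times\R$ over the open cone. By comparison the remaining step, matching ``$\ep$-controlled for all $\ep>0$'' with the local condition over the simplices of $X$, is a routine application of the squeezing machinery already established.
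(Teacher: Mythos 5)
Your overall strategy --- pass to the algebraic mapping cone $E$ of the duality map and apply Theorem \ref{above} --- is exactly what the introduction advertises, and the two translations you propose (condition (1) becomes ``$E\simeq 0\in\A_*(X)$'', condition (2) becomes ``$\textit{``}E\otimes\Z\textit{''}$ is boundedly contractible over $O(X_+)$'') would indeed finish the proof. The gap is that both translations are precisely where the work lies, and neither is covered by your ``standard dictionary'' nor by ``a routine application of the squeezing machinery already established''. The two sides of the duality map are subdivided in structurally different ways: $\Delta^{n-*}(Sd^j\,X)$ is the algebraic subdivision $Sd_{r_X}\Delta^{n-*}(X)$ of a complex in $\BB(\A_*(X))$, whereas $\ttt\Delta^{lf}_*(Sd^{j+1}\,X)$ is the dual-cell assembly of a subdivision of a complex in $\BB(\A^*(Sd\,X))$. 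Consequently the mapping cone of the $j$-times-subdivided duality map is \emph{not} an algebraic subdivision $Sd_r\,E$ of the unsubdivided mapping cone, so the Squeezing Theorem \ref{triangulise} --- and with it the passage from ``$\ep$-controlled for all $\ep>0$'' to ``chain contractible in $\A_*(X)$'' --- does not apply to it as stated. The paper has to prove a separate Poincar\'{e} duality Squeezing Theorem (Theorem \ref{PDsqueezing}) whose support estimates play the two retractions $r_X$ and $r_{Sd\,X}$ and the two assemblies $\ttt_0$, $\ttt_j$ off against one another; this in turn rests on Proposition \ref{keypropforsubdivofPD} and Theorem \ref{subdividingtriangularPD}, which control how dual cells behave under barycentric subdivision.

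The same issue recurs on the open-cone side: the duality map of $X\times\R$ lands in $\ttt\Delta^{lf}_*(Sd\,t^1(X\times\R))$, which is not literally of the form $\textit{``}(\ttt\Delta^{lf}_*(Sd\,X))\otimes\Z\textit{''}$, so condition (2) is not immediately condition (3) of Theorem \ref{above} for $E$; the paper proves a separate Poincar\'{e} duality Splitting Theorem by rerunning the exponential-translation argument with the PD squeezing theorem in place of the generic one. You correctly flag the compatibility of $\textit{``}-\otimes\Z\textit{''}$ with the symmetric structure as the main obstacle, but you underestimate the controlled/$\A_*(X)$ side, which needs the same new input. In short: the corollary does follow from the mapping-cone reduction in outline, but the two ``dictionary'' steps you take for granted constitute the content of Section \ref{section:PD} and cannot be obtained by quoting Theorem \ref{above} and Theorem \ref{triangulise} alone.
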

In particular condition $(1)$ is equivalent to $X$ being a homology manifold, so this gives us a way to detect homology manifolds.

\vspace{5mm}
The categories $\A(X)$ capture algebraically the concept ``$\ep$-controlled for all $\ep>0$'' in the following sense: we define an algebraic subdivision functor $Sd:\A(X) \to \A(Sd\, X)$ for these categories generalising the effect of barycentric subdivision on the simplicial chain and cochain complexes of $X$. If $X$ is finite-dimensional and locally finite, then a chain complex $C$ in $\BB(\A(X))$ has bound at most $\mathrm{mesh}(X)$ and so the bound of $Sd^i\, C \in \BB(\A(Sd^i\, X))$ is at most $\left(\dfrac{\mathrm{dim}(X)}{\mathrm{dim}(X)+1}\right)^i\mathrm{mesh}(X)$ which tends to zero as $i\to \infty$. The subdivision of a chain complex can be reassembled to give back a chain complex equivalent to one started with, thus any chain complex $C\in \BB(\A(X))$ can be given a representative with arbitrarily small bound in $\BB(\A(Sd^i\,X))$ for 
$i$ sufficiently large. 
Applying this to Poincar\'{e} duality, if we can show that a simplicial complex has Poincar\'{e} duality in $\A(X)$ then it necessarily has $\ep$-controlled Poincar\'{e} duality for all $\ep>0$, thus making it necessarily a homology manifold. However, apriori we do not necessarily know if a Poincar\'{e} duality space has Poincar\'{e} duality in $\A(X)$. 

A version of squeezing holds for these categories, namely that for a well behaved simplicial complex $X$ there exists an $\ep(X)$ such that if two chain complexes $C, D\in \BB(\A(X))$ are $\ep$-controlled chain equivalent (not necessarily in $\A(X)$) after subdividing them sufficiently, then they are chain equivalent in $\A(X)$ without subdividing. 
A consequence of this is that a simplicial complex with sufficiently small Poincar\'{e} duality will necessarily be a homology manifold. 

Again the open cone can be used to characterise when a chain complex is chain contractible in $\A(X)$. This turns out to be precisely when the chain complex ``$C\otimes\Z$'' in $\BB(\A(X\times\R))$ is $X\times\R$-graded chain contractible with finite bound measured in $O(X_+)$, where we give $X\times\R$ a simplicial decomposition with uniformly bounded simplices measured in $O(X_+)$. This is our algebraic analogue of the theorem above.

The key trick is that a chain equivalence between chain complexes in $\A(X\times\R)$ measured in $O(X_+)$ can be translated exponentially towards $\{-\infty\}$ with the effect of decreasing its bound by a scale factor. Combining this with the squeezing theorem allows us to obtain a chain equivalence over a slice $X\times\{t\}$ for $t$ large enough. The fact that the metric increases in the open cone as you go towards $\{+\infty\}$ in $\R$ and the fact that the chain equivalence had finite bound over $O(X_+)$ to begin with means that the chain equivalences on the slices $X\times \{t\}$ have control proportional to $\frac{1}{t}$ measured in $X$. 

This gives corollary \ref{cor} which is a simplicial complex version of the unproven footnote of \cite{epsurgthy}. 

The reason our key trick above works so well is that $f\times\id$ is already a product so by translating it in the $\R$ direction we do not change anything. A natural continuation to the work presented here is to tackle splitting problems where this sliding approach will not work, in which case we expect there to be a $K$-theoretic obstruction over each simplex. 

Section \ref{section:prelims} recaps some preliminaries. In section \ref{section:geomcatandassembly} the geometric categories we work and various assembly functors for them are defined. In section \ref{section:algsubdiv} we define the main construction of this paper - the algebraic subdivision functors and in section \ref{section:algsubdivprops} we prove some useful properties of these functors. In section \ref{section:squeezing} we prove the squeezing theorem. In section \ref{section:openconeandsplitting} we introduce the open cone, define a functor corresponding to tensoring with the real line and consider splitting problems. Finally, in section \ref{section:PD} we apply all the methods of the paper to studying Poincar\'{e} duality and homology manifolds.

\begin{ack*}
This work is partially supported by Prof. Michael Weiss' Humboldt Professorship. The author would like to thank the Max-Planck-Institute in Bonn for its hospitality where parts of this work have been carried out.
\end{ack*}

\section{Preliminaries}\label{section:prelims}
\subsection{Geometry}

This paper is concerned with finite-dimensional locally finite simplicial complexes (f.d. l.f. simplicial complexes). We consider abstract (not necessarily embedded in $\R^N$) simplicial complexes with a metric defined as follows. Let $\Delta^n$ denote the \textit{standard $n$-simplex} which is defined to be the convex hull of the points $(1,0,\ldots,0),(0,1,0,\ldots, 0), \ldots, (0,\ldots,0,1) \in \R^{n+1}$. Let $d_{\Delta^n}$ denote the subspace metric inherited from the standard $\ell_2$-metric on $\R^{n+1}$. An f.d. l.f. simplicial complex $X$ is given a complete metric $d_X$, which we call the \textit{standard metric}, by defining $d_X$ to be the path metric whose restriction to each $n$-simplex $\sigma\in X$ is $d_{\Delta^n}$. Distances between points in different path components are thus $\infty$. See $\S 4$ of \cite{bartelssqueezing} or Definition $3.1$ of \cite{HR95} for more details.

We shall often equip an f.d. l.f. simplicial complex $X$ with the identity control map and measure distances in $X$ with $d_X$, or given a map of such spaces $p:Y\to X$ we may measure $Y$ in $X$ with control map $p$ and metric $d_X$. In this paper we only consider the cases where $p$ is a PL map. Regarding subdivisions $X^\prime$ of $X$ we have two possible approaches: either use $\id_X: X^\prime \to (X^\prime,d_{X^\prime})$ or $\id_X: X^\prime \to (X,d_X)$. We opt for the latter as we want subdivision to make simplices smaller. 

Let $p:Y\to X$ be a PL map of f.d. l.f. simplicial complexes that is linear on each simplex $\sigma\in Y$. Then the \textit{diameter of $\sigma$ measured in $X$} is \[\mathrm{diam}(\sigma):= \sup_{x,y\in\sigma}{d_X(p(x),p(y))}.\] The \textit{radius of $\sigma$ measured in $X$} is \[\mathrm{rad}(\sigma) := \inf_{x\in\partial\sigma}d_X(p(\widehat{\sigma}),p(x)).\] The \textit{mesh of $X$ measured in $Y$} is \[\mesh(X):= \sup_{\sigma\in X}\{\mathrm{diam}(\sigma)\}.\] If mesh$(X)<\infty$ we say $X$ has a \textit{bounded triangulation}. The \textit{comesh of $X$ measured in $Y$} is \[\comesh(X):= \inf_{\sigma\in X, |\sigma|\neq 0}\{\mathrm{rad}(\sigma)\}.\] If comesh$(X)>0$ we say $X$ has a \textit{tame 
triangulation}.

We write $\sigma = v_0\ldots v_k$ for the \textit{$k$-simplex} spanned by the vertices $\{v_0,\ldots,v_k\}$:\[ v_0\ldots v_k := \{\sum_{i=0}^{k}{t_iv_i}| \sum_{i=0}^{k}t_i = 1 \}. \]
We refer to the coordinates $(t_0,\ldots,t_k)$ as \textit{barycentric coordinates} and to the point $\widehat{\sigma}:=(\dfrac{1}{k+1},\ldots,\dfrac{1}{k+1})$ as the \textit{barycentre} of $\sigma=v_0\ldots v_k$. We refer to the interior of the simplex $\sigma$, \[\mathring{\sigma}:= \{\sum_{i=0}^{k}{t_iv_i}| \sum_{i=0}^{k}t_i < 1 \}\]as the \textit{open simplex $\sigma$} and we let $|\sigma|:=k$ denotes the \textit{dimension} of $\sigma= v_0\ldots v_k$. 

Given a simplicial complex $X$, we define the \textit{Barycentric subdivision}, $Sd\, X$, of $X$ by $$Sd\, X:=\bigcup_{\sigma_0<\ldots<\sigma_k\leq X}{\widehat{\sigma}_0\ldots\widehat{\sigma}_k}.$$We denote the $i^{th}$ iterated barycentric subdivision of $X$ by $Sd^i\, X$.

For any simplex $\sigma\leq X$ we define the \textit{closed dual cell} by \[D(\sigma,X):= \{ \widehat{\sigma}_0\ldots\widehat{\sigma}_k\leq Sd\, X|\sigma\leq\sigma_0<\ldots<\sigma_k\leq X\}\] with boundary \[\partial D(\sigma,X):= \{ \widehat{\sigma}_0\ldots\widehat{\sigma}_k\leq Sd\, X|\sigma<\sigma_0<\ldots<\sigma_k\in X\}.\] We will call the interior of the closed dual cell the \textit{open dual cell} and denote it by
\[\mathring{D}(\sigma,X):= \{ \widehat{\sigma}_0\ldots\widehat{\sigma}_k\leq Sd\, X|\sigma=\sigma_0<\ldots<\sigma_k\leq X\}.\]  Note that for open dual cells $\mathring{D}(\sigma,X)= D(\sigma,X) - \partial D(\sigma,X)$.

The \textit{open star st$(\sigma)$ of a simplex $\sigma \in X$} is defined by \[\mathrm{st}(\sigma):= \bigcup_{\tau\geqslant \sigma}\mathring{\tau}.\]
The \textit{closed star St$(\sigma)$ of a simplex $\sigma\in X$} is defined by \[\mathrm{St}(\sigma):= \bigcup_{\tau\geqslant \sigma}\tau.\]
For $Y\subset X$, we denote by $\Fr Y$ the \textit{frontier of $Y$ in $X$}:\[\Fr Y:= \overline{Y}\backslash \mathring{Y}.\]

Given simplices $\sigma= v_0\ldots v_k$, $\tau= v_{k+1}\ldots v_{m}\in X$ define the \textit{join of $\sigma$ and $\tau$} by \[\sigma *\tau:= v_0\ldots v_m.\] We say that $\sigma$, $\tau\in X$ are \textit{joinable} if $\sigma*\tau\in X$. For $\sigma\in X$, define the \textit{link of $\sigma$ in $X$}, $\link(\sigma,X)$ to be the union of all simplices in $X$ that are joinable with $\sigma$.

For a simplex $\sigma$ embedded in Euclidean space and measured there via the identity map and the standard $\ell_2$ metric it is a standard result that \[ \mesh(Sd\, \sigma) \leqslant \dfrac{|\sigma|\mathrm{diam}(\sigma)}{|\sigma|+1}.\]
Dually it is also true that \[ \comesh(Sd\, \sigma) \geqslant \dfrac{\mathrm{diam}(\sigma)}{|\sigma|(|\sigma|+1)}.\] For a proof of this assertion see the appendix. Consequently, as the metric $d_X$ is standard on each simplex, measuring $Sd\,X$ in $X$ with $\id_X:Sd\,X \to (X,d_X)$ we have that 
\begin{align}
 \mesh(Sd\, X) &\leqslant \dfrac{\mathrm{dim}(X)\mesh(X)}{\mathrm{dim}(X)+1},\notag \\
 \comesh(Sd\, X) &\geqslant \dfrac{\comesh(X)}{\mathrm{dim}(X)(\mathrm{dim}(X)+1)},\notag \\
\end{align}
and so all finite iterated barycentric subdivisions $Sd^i\,X$ of $X$ have bounded and tame triangulations. Using $\id_X:X\to (X,d_X)$ as the control map $\diam (\sigma) = \sqrt{2}$ and $\rad (\sigma) = \frac{1}{\sqrt{|\sigma|(|\sigma|+1)}},$ for all $\sigma\in X$, so consequently $\mesh (X) = \sqrt{2}$ and $\comesh (X) = \frac{1}{\sqrt{\mathrm{dim}(X)(\mathrm{dim}(X)+1)}}$.

\begin{defn}
Let $\sigma \in X$ be a simplex, and $a\in\mathring{\sigma}$ a point in its interior. Define the \textit{stellar subdivision $(\sigma,a)X$ of $X$ at $a$} to be the simplicial complex obtained by replacing $\sigma * \link(\sigma,X)$ by $a * \partial \sigma *\link(\sigma,X)$. We say $(\sigma,a)X$ is a \textit{stellar subdivision of $X$}.
\end{defn}
In this paper we will only consider stellar subdivisions for which $a=\widehat{\sigma}$.

\begin{defn}
Let $\{\sigma_i\}_{i\in I}$ be a collection of simplices of $X$ with pairwise disjoint open stars: 
\[ \forall i,j\in I:\mathrm{st}(\sigma_i) \cap \mathrm{st}(\sigma_j)= \emptyset.\]
Let $X^\prime$ be the subdivision of $X$ obtained by simultaneously performing stellar subdivisions at $\widehat{\sigma}_i$ for all $i\in I$. We call $X^\prime$ a \textit{simultaneous disjoint stellar subdivision of $X$}.
\end{defn}

\begin{defn}
Let $X^\prime$ be a subdivision of $X$ obtained by performing finitely many simultaneous disjoint stellar subdivisions. Then we call $X^\prime$ an \textit{iterated stellar subdivision of $X$}.
\end{defn}

We will be primarily concerned with iterated stellar subdivisions.

\begin{ex}\label{prismexample}
The barycentric subdivision $Sd\, X$ of a finite-dimensional simplicial complex is an iterated stellar subdivision. It is the composite of $|\dim(X)|$ simultaneous disjoint stellar subdivision: first all top dimensional simplices of $X$ are stellar subdivided, then all codimension 1 simplices of $X$ and so on.
\end{ex}

\begin{defn}
Let $\mathrm{Prism}(X,X^\prime)$ denote a triangulated prism $||X||\times I$ such that the triangulation of $||X||\times \{0\}$ is $X$ and $||X||\times \{1\}$ is $X^\prime$.
\end{defn}
\begin{rmk}\label{prismremark}
Note that if $X^\prime$ is an iterated stellar subdivision of $X$, then having fixed a triangulation of $\mathrm{Prism}(X,X)$ we obtain a triangulation of $\mathrm{Prism}(X,X^\prime)$ that is an iterated stellar subdivision of $\mathrm{Prism}(X,X)$ by performing the same sequence of simultaneous disjoint stellar subdivisions on $X\times \{1\} \subset X\times [0,1]$ as we perform on $X$ to obtain $X^\prime$.
\end{rmk}

For a complete metric space $(M,d)$ the \textit{open cone} $O(M_+)$ is defined to be the identification space $M\times\R/\sim$ with $(m,t)\sim(m^\prime,t)$ for all $m,m^\prime\in M$ if $t\leqslant 0$. We define a metric $d_{O(M_+)}$ on $O(M_+)$ by setting 
\begin{eqnarray*}
 d_{O(M_+)}((m,t),(m^\prime,t)) &=& \brcc{td(m,m^\prime),}{t\geqslant 0,}{0,}{t\leqslant 0,} \\
 d_{O(M_+)}((m,t),(m,s)) &=& |t-s|
\end{eqnarray*}
 and defining $d_{O(M_+)}((m,t),(m^\prime,s))$ to be the infimum over all paths from $(m,t)$ to $(m^\prime,s)$, which are piecewise geodesics in either $M\times\{r\}$ or $\{n\}\times\R$, of the length of the path. I.e.
\[d_{O(M_+)}((m,t),(m^\prime,s)) = \max\{\min\{t,s\},0\}d_X(m,m^\prime) + |t-s|.\]
This metric is carefully chosen so that 
\[ d_{O(M_+)}|_{M\times\{t\}} = \brcc{td_{O(M_+)}|_{M\times\{1\}},}{t\geqslant 0,}{0,}{t\leqslant 0.}\]
This is precisely the metric used by Anderson and Munkholm in \cite{AndMunk} and also by Siebenmann and Sullivan in \cite{SiebSull}, but there is a notable distinction: we do not necessarily require that our metric space $(M,d)$ have a finite bound. 

Define the \textit{coning map} $j_X:X\times\R \to O(X_+)$ as the natural quotient map
\begin{eqnarray*}
X\times\R &\to& X\times\R/\sim \\ 
(x,t) &\mapsto& [(x,t)].
\end{eqnarray*}

For $M$ a proper subset of $S^n$ with the subspace metric, the open cone $O(M_+)$ can be thought of as all the points in the lines out from the origin in $\R^{n+1}$ through points in $M_+:= M \cup \{pt\}$. This is not the same as the metric we just defined above but it is Lipschitz equivalent. 

\subsection{Algebra}
\begin{defn}
Let $\A$ be an additive category.
\begin{enumerate}[(i)]
 \item An \textit{$\A$-chain complex $C$} is a sequence of objects and morphisms of $\A$
\begin{displaymath}
 \xymatrix@C=13mm{C:\,\ldots \ar[r] & C_{i+1} \ar[r]^-{(d_C)_{i+1}} & C_{i} \ar[r]^-{(d_C)_i} & C_{i-1} \ar[r] & \ldots 
}
\end{displaymath}
such that $(d_C)^2=0$. The chain complex is \textit{finite} if $\{ i\in \Z| C_i\neq 0\}$ is finite. 
 \item An \textit{$\A$-chain map $f:C\to D$} is a sequence of morphisms $f_i:C_i\to D_i$ of $\A$ such that $(d_D)_if_i = f_{i-1}(d_C)_i:C_i \to D_{i-1}$, for all $i$.
 \item Denote by $\BB(\A)$ the category of finite $\A$-chain complexes together with $\A$-chain maps.
 \item An \textit{$\A$-chain homotopy} $P:f\simeq f^\prime$ between $\A$-chain maps $f,f^\prime:C\to D$ is a sequence of morphisms $P_i:C_i \to D_{i+1}$ in $\A$ such that \[f_i-f^\prime_i = (d_D)_{i+1}P_i + P_{i-1}(d_C)_i:C_i \to D_i.\]
 \item An \textit{$\A$-chain equivalence} of $\A$-chain complexes $C,D$ is an $\A$-chain map $f:C\to D$ with an $\A$-chain homotopy inverse, i.e. an $\A$-chain map $g:D\to C$ with $\A$-chain homotopies $P_C:gf\simeq \id_C:C\to C$, $P_D:fg\simeq \id_D:D\to D$. This information will often be written more concisely as 
\begin{displaymath}
 \xymatrix@C=13mm{ (C,d_C,P_C) \ar@<0.5ex>[r]^-{f} & (D,d_D,P_D). \ar@<0.5ex>[l]^-{g}
}
\end{displaymath}

\noindent We say that the $\A$-chain complexes $C,D$ are \textit{$\A$-chain equivalent} and write $C\simeq D$ if there is an $\A$-chain equivalence $f:C\to D$. 
\item An \textit{$\A$-chain contraction} of an $\A$-chain complex $C$ is an $\A$-chain homotopy $P_C:0\simeq \id_C:C\to C$. If $C$ admits an $\A$-chain contraction, i.e. if $C\simeq 0$,  we say that $C$ is \textit{$\A$-chain contractible}.
\item The suspension $\Sigma C$ of an $\A$-chain complex $C$ is the $\A$-chain complex \[ (d_{\Sigma C})_n:= (d_C)_{n-1}: (\Sigma C)_n= C_{n-1} \to C_{n-2}=(\Sigma C)_{n-1}.\]
\item The \textit{algebraic mapping cone} of an $\A$-chain map $f:C\to D$ is the $\A$-chain complex $\C(f)$ with 
\[\C(f)_n:= C_n\oplus D_{n+1}\]
and boundary maps 
\[(d_{\C(f)})_n = \left(\begin{array}{cc} (d_C)_n & 0 \\ f_n & -(d_D)_{n+1} \end{array}\right).\]
\end{enumerate}
\end{defn}

\begin{rmk}
An $\A$-chain map $f:C\to D$ is an $\A$-chain equivalence if and only if $\C(f)$ is $\A$-chain contractible.
\end{rmk}

\section{Geometric categories and assembly}\label{section:geomcatandassembly}
In algebraic Topology the passage from topology to algebra often loses valuable geometric information. Geometric categories are designed to retain this information by having geometric information, namely a point in a topological space, associated to each piece of algebra. Roughly speaking, this enables one to keep track of where the algebra comes from. 

Let $X$ be an f.d. l.f. simplicial complex and $R$ a commutative ring. Denote by $\F(R)$ the category of finitely generated free $R$-modules.

\begin{defn}\label{geometriccategories}
\begin{enumerate}[(i)]
 \item Define the \textit{$X$-graded} category $\mathbb{G}_X(\A)$ to be the additive category whose objects are collections of objects of $\A$, $\{M(\sigma)\,|\, \sigma \in X \}$, indexed by the simplices of $X$, written as a direct sum \[\sum_{\sigma\in X} M(\sigma) \] and whose morphisms \[f = \{f_{\tau,\sigma}\}: L = \sum_{\sigma\in X}L(\sigma) \to M = \sum_{\tau\in X}M(\tau) \]are collections $\{f_{\tau,\sigma}:L(\sigma) \to M(\tau)\,|\,\sigma,\tau \in X \}$ of morphisms in $\A$ such that for each $\sigma\in X$, the set $\{\tau\in X\,|\, f_{\tau,\sigma}\neq 0 \}$ is finite. 
 
The composition of morphisms $f:L\to M$, $g:M\to N$ in $\mathbb{G}_X(\A)$ is the morphism $g\circ f:L\to N$ defined by \[(g\circ f)_{\rho, \sigma} = \sum_{\tau\in X} g_{\rho,\tau}f_{\tau,\sigma}: L(\sigma) \to N(\rho) \]where the sum is actually finite.
 \item Let $\left\{\begin{array}{c} \A^*(X) \\ \A_*(X) \end{array}\right.$ be the additive category with objects $M$ in $\mathbb{G}_X(\A)$ and with morphisms $f:M\to N$ in $\mathbb{G}_X(\A)$ such that $f_{\tau,\sigma}:M(\sigma)\to N(\tau)$ is $0$ unless $\left\{ \begin{array}{c} \tau\leqslant \sigma \\ \tau\geqslant \sigma. \end{array}\right.$ 
\end{enumerate}
\end{defn}

It is convenient to regard an $X$-graded morphism $f$ as a matrix with one column $\{f_{\tau,\sigma}\,|\,\tau\in X \}$ for each $\sigma\in X$ (containing only finitely many non-zero entries) and one row $\{f_{\tau,\sigma}\,|\,\sigma\in X \}$ for each $\tau\in X$. Morphisms of $\left\{\begin{array}{c} \A^*(X) \\ \A_*(X) \end{array}\right.$ are to be thought of as triangular matrices.

\begin{notn}
In the case where it doesn't matter which category is considered we will write $\A(X)$ to mean \textit{either $\A^*(X)$ or $\A_*(X)$}. Similarly $\A(X) \to \A(Sd\, X)$ will mean \textit{either $\A^*(X) \to \A^*(Sd\, X)$ or $\A_*(X) \to \A_*(Sd\, X)$}. 
\end{notn}

\begin{ex}
Taking locally finite chains in the case of the simplicial chain complex, the simplicial $\brc{\mathrm{chain}}{\mathrm{cochain}}$ complex $\brc{\Delta^{lf}_*(X)}{\Delta^{-*}(X)}$ is naturally a chain complex in $\brc{\BB(\A^*(\F(\Z)))}{\BB(\A_*(\F(\Z)))}$ with \[\begin{array}{rcl} \Delta^{lf}_*(X)(\sigma)= &\Delta_*(\sigma,\partial\sigma) = & \Sigma^{|\sigma|}\Z \\ \Delta^{-*}(X)(\sigma) = &\Delta^{-*}(\sigma,\partial\sigma) = &\Sigma^{-|\sigma|}\Z. \end{array}\]
\end{ex}

\begin{defn}\label{bdf2}
Let $(X,p)$ be a finite-dimensional locally finite simplicial complex with control map $p:X\to (M,d)$. 

Define the \textit{bound} of a chain map $f:C\to D$ of $X$-graded chain complexes by \[\bd(f):= \sup_{f_{\tau,\sigma}\neq 0}d(p(\widehat{\sigma}),p(\widehat{\tau})).\] 

Define the \textit{bound} of a chain homotopy $P:C_*\to D_{*+1}$ of $X$-graded chain complexes by \[\bd(P):= \sup_{P_{\tau,\sigma}\neq 0}d(p(\widehat{\sigma}),p(\widehat{\tau})).\] 

We say that a chain equivalence $f:C\to D$ of $X$-graded chain complexes has \textit{bound} $\ep$ if there exists a chain inverse $g$ and chain homotopies $P:\id_C\simeq g\circ f:C_*\to C_{*+1}$, $Q:\id_D\simeq f\circ g:D_*\to D_{*+1}$ all with bound at most $\ep$.
\qed\end{defn}

\begin{rmk}\label{makesmall}
When we measure in $X$ with the identity map as the control map, then the bound of a chain complex (or a chain equivalence) in $\A(X)$ is at most the maximum diameter of any simplex in $X$, i.e.\ $\mesh(X)$. Thus by subdividing we can get a chain complex with control as small as we like that when reassembled is chain equivalent in $\A(X)$ to the one we started with.
\end{rmk}

The following bounded categories are due to Pedersen and Weibel:
\begin{defn}\label{bddcat}
Given a metric space $(X,d)$ and an additive category $\A$, let $\CC_X(\A)$ be the category whose objects are collections $\{M(x)\,|\,x\in X \}$ of objects in $\A$ indexed by $X$ in a locally finite way, written as a direct sum \[M = \sum_{x\in X} {M(x)}\] where $\forall x\in X$, $\forall r>0$, the set $\{ y\in X \,|\, d(x,y)<r\;\mathrm{and}\; M(y)\neq 0 \}$ is finite. A morphism of $\CC_X(\A)$, \[f = \{f_{y,x}\}: L = \sum_{x\in X}L(x) \to M = \sum_{y\in X}M(y) \] is a collection $\{f_{y,x}:L(x) \to M(y)\,|\,x,y\in X\}$ of morphisms in $\A$ such that each morphism has a bound $k=k(f)$, such that if $d(x,y)>k$ then $f_{x,y} = f_{y,x} = 0$.
\qed\end{defn}

\begin{defn}
Let $C\in \BB(\A(X))$. Define the chain complex $C(\sigma)\in \BB(\A)$ by \[ C(\sigma)_n:= C_n(\sigma),\quad (d_{C(\sigma)})_n:= ((d_{C})_n)_{\sigma,\sigma}.\] Note that this is a chain complex because \[0 = ((d_C^2)_n)_{\sigma,\sigma} = \sum_{\sigma\leqslant \tau \leqslant \sigma}{((d_C)_n-1)_{\sigma,\tau}\circ ((d_C)_n)_{\tau,\sigma}} = ((d_C)_{n-1})_{\sigma,\sigma}\circ ((d_C)_n)_{\sigma,\sigma}.\]
\end{defn}

\begin{defn}\label{definesupports}
Let $C$ be in $\BB(\GG_X(\A))$, $\BB(\A^*(X))$ or $\BB(\A_*(X))$. Define the \textit{support of $C$} by
\[\mathrm{Supp}(C):= \bigcup_{\sigma\in X: C(\sigma)\neq 0}\mathring{\sigma}.\]
\end{defn}
One could think of $X$-graded chain complexes as being supported on barycentres of simplices, we take the convention that they are supported on \textit{open} simplices.

The following proposition demonstrates possibly the most important property of the categories $\A(X)$ that ``local = global'', namely a chain complex in $\BB(\A(X))$ is globally contractible if and only if it is locally contractible over each simplex. The proof of this proposition is analogous to the proof in linear algebra of the fact that a triangular matrix is invertible if and only if its diagonal entries are. In such a case one can simply write down the inverse and as we will see below given local chain contractions one can simply write down a global one.

\begin{prop}\label{chcont}
Let $X$ be a locally finite simplicial complex, and let $C$ be a chain complex in $\A(X)$. Then, 
\begin{enumerate}[(i)]
 \item $C$ is chain contractible in $\A(X)$ if and only if $C(\sigma)$ is chain contractible in $\A$ for all $\sigma\in X$.
 \item A chain map $f:C\to D$ of chain complexes in $\A(X)$ is a chain equivalence if and only if $f_{\sigma,\sigma}:C(\sigma)\to D(\sigma)$ is a chain equivalence in $\A$ for all $\sigma\in X$.
\end{enumerate}
\end{prop}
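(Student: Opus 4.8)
The plan is to prove part (i) by an induction on simplices ordered compatibly with the partial order on $X$, mimicking the linear-algebra fact that a triangular matrix is invertible iff its diagonal blocks are; part (ii) then follows by applying (i) to the algebraic mapping cone $\C(f)$, using the remark that $f$ is a chain equivalence iff $\C(f)$ is chain contractible together with the observation that $\C(f)(\sigma) = \C(f_{\sigma,\sigma})$ as chain complexes in $\A$ (the off-diagonal entries of $d_{\C(f)}$ contribute nothing to the $(\sigma,\sigma)$-component, and the diagonal block of $d_{\C(f)}$ is exactly the mapping-cone differential of $f_{\sigma,\sigma}$). So the real content is (i), and within (i) only one direction is substantial: if every $C(\sigma)$ is $\A$-contractible, produce an $\A(X)$-contraction of $C$. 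The other direction is immediate, since restricting an $\A(X)$-chain contraction $P$ of $C$ to the $(\sigma,\sigma)$-entries gives, by the same computation used to show $C(\sigma)$ is a chain complex, an $\A$-chain contraction of $C(\sigma)$.

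For the hard direction I would treat the case $\A(X) = \A_*(X)$ (morphisms raise simplex dimension / go $\tau \geqslant \sigma$); the $\A^*(X)$ case is dual and handled by reversing the order on simplices. Write $d = d_C$, decompose $d = \Delta + N$ where $\Delta$ is the ``diagonal'' part $\Delta_{\sigma,\sigma} = d_{\sigma,\sigma}$ and $N$ is the strictly-off-diagonal part ($N_{\tau,\sigma} \neq 0 \Rightarrow \tau > \sigma$). Choose, for each $\sigma$, an $\A$-chain contraction $p^\sigma$ of $C(\sigma)$, i.e. $\Delta_{\sigma,\sigma} p^\sigma + p^\sigma \Delta_{\sigma,\sigma} = \id_{C(\sigma)}$. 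Assemble these into a single $X$-graded morphism $p = \{p_{\tau,\sigma}\}$ with $p_{\sigma,\sigma} = p^\sigma$ and $p_{\tau,\sigma} = 0$ for $\tau \neq \sigma$; note $p$ is a morphism of $\A_*(X)$ (it is ``diagonal'', hence trivially triangular in either sense). Then $dp + pd = \id + (Np + pN)$, and $E := Np + pN$ is strictly upper-triangular in the simplex order. The key point is \emph{local finiteness plus finite dimensionality}: although $X$ is infinite, any chain in the poset of simplices $\sigma_0 < \sigma_1 < \cdots$ has length at most $\dim(X)+1$, and for each fixed $\sigma$ only finitely many $\tau$ have $E_{\tau,\sigma} \neq 0$ (inherited from the column-finiteness of $d$ and $p$). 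Hence $E^{\dim(X)+1} = 0$ as an $X$-graded morphism, so $\id + E$ is invertible with inverse $\sum_{k \geqslant 0}(-E)^k$, a \emph{finite} sum in each matrix entry, and this inverse again lies in $\A_*(X)$ since the strictly-triangular morphisms form an ideal closed under the relevant composition. Setting $P := p(\id + E)^{-1}$ (or symmetrizing, $P := \tfrac12\big(p(\id+E)^{-1} + (\id+E)^{-1}p\big)$ if one wants it to look cleaner — either works) gives a morphism of $\A_*(X)$ with $dP + Pd = \id_C$, i.e. the desired $\A(X)$-chain contraction. One should double-check that $P$ shifts degree by $+1$, which it does since $p$ does and $E$ preserves degree.

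The main obstacle is bookkeeping rather than ideas: one must verify carefully that all the infinite-looking sums ($\sum_\tau$ in the matrix products, $\sum_k (-E)^k$) are actually finite, which is exactly where local finiteness of $X$ and finiteness of $\dim(X)$ enter, and that every morphism constructed stays inside the triangular subcategory $\A(X)$ rather than merely $\GG_X(\A)$ — in particular that the product of two morphisms supported on $\{\tau > \sigma\}$ is again so supported, and that the geometric (boundedness) constraints defining morphisms of $\GG_X(\A)$ are preserved. A secondary nuisance is making the degree-grading explicit throughout: $d$ has degree $-1$, $p$ and $P$ have degree $+1$, $E$ and $\id+E$ have degree $0$, so $(\id+E)^{-1}$ has degree $0$ and $P = p(\id+E)^{-1}$ has degree $+1$ as required. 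Once these are nailed down the identity $dP + Pd = \id_C$ is a one-line manipulation from $dp + pd = \id + E$ and $(\id+E)(\id+E)^{-1} = \id$, provided one also checks $d(\id+E)^{-1} = (\id+E)^{-1}d$, which follows because $\id+E = dp+pd$ commutes with $d$ (as $d^2 = 0$).
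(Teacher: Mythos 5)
Your argument is correct and is in essence the paper's own: both prove the hard direction of (i) by the triangular-matrix principle, observing that with $p$ the diagonal morphism built from the local contractions, $d_Cp+pd_C$ equals the identity plus a strictly triangular term that is nilpotent because strictly increasing chains of simplices in $X$ have length at most $\dim(X)+1$. The only difference is cosmetic: the paper writes down the explicit contraction $\sum_i(-1)^ip(Np)^i=p(\id+Np)^{-1}$, indexed by flags $\tau=\sigma_0<\cdots<\sigma_i=\sigma$, whereas you invert $\id+(Np+pN)$ and verify the identity abstractly via the commutation $d(\id+E)=(\id+E)d$; both yield valid $\A(X)$-chain contractions.
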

This is a well known result (\cite{bluebk} Prop. 4.7.) for which we present a new direct proof.
\begin{proof}
Part $(ii)$ follows from applying $(i)$ to the algebraic mapping cone of $f$, so it suffices to prove $(i)$.

($\Rightarrow$)
Suppose $P:C\simeq 0$ is a chain contraction in $\A(X)$. Then the diagonal entries $P_{\sigma\sigma}$ are chain contractions in $\A$ for each $C(\sigma)$.

($\Leftarrow$)
Suppose $P_{\sigma}:C(\sigma)\simeq 0$ is a chain contraction in $\A$ for each $\sigma\in X$. Then

\[P_{\tau,\sigma} := \sum_{i=0}^{|\sigma|-|\tau|}\sum_{\tau=\sigma_0<\ldots<\sigma_i=\sigma}(-1)^iP_{\sigma_0}(d_C)_{\sigma_0\sigma_1}P_{\sigma_1}\ldots (d_C)_{\sigma_{i-1}\sigma_i}P_{\sigma_i}\] defines a chain contraction in $\A^*(X)$ for $C\in\BB(\A^*(X))$ and \[P_{\tau,\sigma} := \sum_{i=0}^{|\tau|-|\sigma|}\sum_{\sigma=\sigma_0<\ldots<\sigma_i=\tau}(-1)^iP_{\sigma_0}(d_C)_{\sigma_0\sigma_1}P_{\sigma_1}\ldots (d_C)_{\sigma_{i-1}\sigma_i}P_{\sigma_i}\] defines one in $\A_*(X)$ for $C\in\BB(\A_*(X))$. 
\end{proof}

Chain complexes in $\GG_X(\A)$, $\CC_X(\A)$, $\A^*(X)$ or $\A_*(X)$ carry lots more information than chain complexes in $\A$ as we have geometric data associated to each piece of algebra. One can of course forget some of this information, and this can be done in many different ways.

\begin{defn}\label{restrictioncomplex}
Let $Y$ be a set of open simplices in $X$.
\begin{enumerate}[(i)]
 \item For $M$ an object in $\mathbb{G}_{X}(\A)$, define the \textit{restriction of $M$ to $Y$} to be the object $M|_Y$ in $\mathbb{G}_{X}(\A)$ given by \[(M|_Y)(\sigma):= \brcc{M(\sigma),}{\mathring{\sigma}\in Y}{0,}{\mathrm{otherwise}.}\]
 \item For a morphism $f:M\to N$ in $\mathbb{G}_{X}(\A)$, the \textit{restriction of $f$ to $Y$} is the morphism $f|_Y: M|_Y \to N|_Y$ in $\mathbb{G}_{X}(\A)$ defined by \[ (f|_Y)_{\tau,\sigma} := \brcc{f_{\tau,\sigma},}{\mathring{\sigma},\mathring{\tau}\in Y}{0,}{\mathrm{otherwise}.}\]
 \item For $C$ a chain complex in $\mathbb{G}_{X}(\A)$, define the \textit{restriction of $C$ to $Y$} by 
 \begin{eqnarray*}
  (C|_Y)_n &:=& C_n|_Y \\
  (d_{C|_Y})_n &:=& (d_C)_n|_Y.
 \end{eqnarray*}
\end{enumerate}
\end{defn}
In general $C|_Y$ is not a chain complex, but for $C\in\BB(\A(X))$ choosing $Y$ carefully it is:

\begin{lem}
Let $C\in\BB(\A(X))$ and let $Y$ be a set of open simplices in $X$ such that 
\begin{equation}\label{five}
\mathring{\rho},\mathring{\sigma} \in Y: \; \rho\leqslant \sigma\quad \Rightarrow \quad \mathring{\tau}\in Y \;\,  \forall \rho\leqslant \tau\leqslant \sigma.
\end{equation} 
Then $C|_Y\in\BB(\A(X))$.
\end{lem}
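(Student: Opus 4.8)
The plan is to check directly that $d_{C|_Y}$ squares to zero; everything else is immediate. Indeed, $C|_Y$ is finite because $(C|_Y)_n=0$ whenever $C_n=0$, its objects $(C|_Y)_n=\sum_{\mathring\sigma\in Y}C_n(\sigma)$ are sub-sums of objects of $\A(X)$, and each $(d_{C|_Y})_n$ is a morphism of $\A(X)$ with the local finiteness property since $(d_{C|_Y})_{\tau,\sigma}\neq 0$ implies $(d_C)_{\tau,\sigma}\neq 0$, so restriction preserves both triangularity ($\tau\leqslant\sigma$ resp.\ $\tau\geqslant\sigma$) and the column-finiteness condition. Thus the only real content is $(d_{C|_Y})^2=0$. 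I will do the case $\A(X)=\A^*(X)$; the case $\A_*(X)$ is formally dual, obtained by reversing all the inequalities $\rho\leqslant\tau\leqslant\sigma$ below.

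First I would reduce to a single matrix entry. Fix $\rho,\sigma\in X$ and consider $\big((d_{C|_Y})^2\big)_{\rho,\sigma}=\sum_{\tau\in X}(d_{C|_Y})_{\rho,\tau}(d_{C|_Y})_{\tau,\sigma}$. If $\mathring\rho\notin Y$ then every factor $(d_{C|_Y})_{\rho,\tau}$ vanishes; if $\mathring\sigma\notin Y$ then every factor $(d_{C|_Y})_{\tau,\sigma}$ vanishes; so in either case the entry is already $0$ and there is nothing to prove. Hence I may assume $\mathring\rho,\mathring\sigma\in Y$.

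The key step is then to observe that restricting to $Y$ discards no nonzero summand of this composition. In $\A^*(X)$ the product $(d_C)_{\rho,\tau}(d_C)_{\tau,\sigma}$ is nonzero only when $\rho\leqslant\tau\leqslant\sigma$; but condition \eqref{five}, applied to $\mathring\rho,\mathring\sigma\in Y$ with $\rho\leqslant\sigma$, forces $\mathring\tau\in Y$ for every such intermediate $\tau$. Consequently $(d_{C|_Y})_{\rho,\tau}=(d_C)_{\rho,\tau}$ and $(d_{C|_Y})_{\tau,\sigma}=(d_C)_{\tau,\sigma}$ for every $\tau$ that contributes a nonzero term to either square, so
\[
\big((d_{C|_Y})^2\big)_{\rho,\sigma}=\sum_{\rho\leqslant\tau\leqslant\sigma}(d_C)_{\rho,\tau}(d_C)_{\tau,\sigma}=\big((d_C)^2\big)_{\rho,\sigma}=0,
\]
the last equality because $C\in\BB(\A(X))$. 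This gives $(d_{C|_Y})^2=0$ and hence $C|_Y\in\BB(\A(X))$.

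I do not expect a genuine obstacle: the only way restriction could break $d^2=0$ is by deleting an interior index $\tau$ of a zig-zag $\rho\leqslant\tau\leqslant\sigma$ that was needed for cancellation, and hypothesis \eqref{five} is precisely the interval-closedness of $Y$ that forbids this. The single point requiring care is the bookkeeping of which of $\mathring\rho,\mathring\sigma,\mathring\tau$ must lie in $Y$ for a given entry of $d_{C|_Y}$ (or of its square) to be possibly nonzero; once that is pinned down, the verification is the displayed one-line identity, and the $\A_*(X)$ case follows verbatim with $\rho\leqslant\tau\leqslant\sigma$ replaced by $\sigma\leqslant\tau\leqslant\rho$.
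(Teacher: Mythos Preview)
Your proof is correct and follows essentially the same approach as the paper: both reduce to checking that $(d_{C|_Y}^2)_{\rho,\sigma}=0$ by observing that condition~\eqref{five} guarantees no intermediate $\tau$ with $\rho\leqslant\tau\leqslant\sigma$ is lost in the restriction, so the restricted sum agrees with the original one. You are simply more explicit about the trivial cases ($\mathring\rho\notin Y$ or $\mathring\sigma\notin Y$) and about why finiteness and triangularity are preserved, which the paper leaves implicit.
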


\begin{proof}
Consider $\A^*(X)$. For all $\rho\leqslant\sigma$, \[(d_C^2)_{\rho,\sigma} = \sum_{\rho\leqslant\tau\leqslant\sigma}{(d_C)_{\rho,\tau}(d_C)_{\tau,\sigma}}=0.\] Suppose $\mathring{\rho}$, $\mathring{\sigma}\in Y$ with $\rho\leqslant\sigma$. If $(\ref{five})$ holds then no terms are missing from the sum, so \[(d_{C|Y}^2)_{\rho,\sigma} = (d_C^2)_{\rho,\sigma} = 0.\] Similarly for $\A_*(X)$. 
\end{proof}

\begin{defn}\label{assembleobjects}
Let $Y$, $Y^\prime$ be finite sets of open simplices in $X$. 
\begin{enumerate}[(i)]
 \item For an object $M$ in $\mathbb{G}_{X}(\A)$ define the \textit{assembly of $M$ over $Y$} to be the object $M[Y]$ in $\A$ defined by \[M[Y] := \sum_{\mathring{\sigma}\in Y}{M(\sigma)}.\]
 \item For a morphism $f:M\to N$ in $\mathbb{G}_{X}(\A)$ define the \textit{assembly of $f$ from $Y$ to $Y^\prime$} to be the morphism $f_{[Y^\prime],[Y]}: M[Y] \to N[Y^\prime]$ in $\A$ given by the matrix \[ f_{[Y^\prime],[Y]} := \{f_{\sigma,\tau}\}_{\mathring{\sigma}\in Y^\prime,\mathring{\tau}\in Y}\] with respect to the direct sum decompositions of $M[Y]$ and $N[Y^\prime]$ given in $(i)$.
 \item For a chain complex $C\in\BB(\A(X))$ and a set of open simplices $Y$ satisfying $(\ref{five})$ the \textit{assembly of $C$ over $Y$}, denoted $C[Y]$, defined by 
\begin{eqnarray*}
 C[Y]_n &:=& C_n[Y] \\
 (d_{C[Y]})_n &:=& ((d_C)_n)_{[Y],[Y]}
\end{eqnarray*}
is in $\BB(\A)$. 
\end{enumerate}
\end{defn}

\begin{defn}
Let $X,Y$ be f.d. l.f. simplicial complexes. An \textit{$X$-partition} of $Y$ is a collection $\{Y_\sigma|\sigma\in X\}$ of subspaces $Y_\sigma\subseteq Y$, each a finite union of open simplices in $Y$, indexed by simplices $\sigma\in X$ such that
\begin{enumerate}
 \item $Y_\sigma \cap Y_\tau = \emptyset, \quad \forall \sigma\neq \tau \in X$,
 \item $\bigcup_{\sigma\in X}{Y_\sigma} = Y$.
\end{enumerate}
We call the $X$-partition of $Y$ $\brc{\mathrm{\textit{covariant}}}{\mathrm{\textit{contravariant}}}$ if $\overline{Y}_\sigma\cap Y_\tau \neq \emptyset$ if and only if $\brc{\tau\leqslant \sigma}{\sigma \leqslant \tau.}$
\end{defn}

\begin{lem}
Let $\{Y_\sigma|\sigma\in X\}$ be a covariant or contravariant $X$-partition of $Y$. Then $Y_\sigma$ satisfies condition $(\ref{five})$ for all $\sigma \in X$.
\end{lem}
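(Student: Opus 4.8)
The plan is to argue directly from the definitions, using only the elementary topological fact that if $\rho$ is a face of a simplex $\tau$ then the open simplex $\mathring{\rho}$ is contained in the closed simplex $\overline{\mathring{\tau}}=\tau$. Fix $\sigma\in X$; I want to check condition $(\ref{five})$ for the set $Y_\sigma$, regarded as a set of open simplices of the complex $Y$. So I would start with open simplices $\mathring{\rho},\mathring{\rho'}\in Y_\sigma$ of $Y$ with $\rho\leqslant\rho'$, take any simplex $\tau$ of $Y$ with $\rho\leqslant\tau\leqslant\rho'$, and use that $\{Y_\sigma\mid\sigma\in X\}$ is a partition to produce the unique $\sigma'\in X$ with $\mathring{\tau}\in Y_{\sigma'}$. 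The goal is then reduced to showing $\sigma'=\sigma$.

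The key step is to extract two closure-intersection relations from the face inequalities $\rho\leqslant\tau$ and $\tau\leqslant\rho'$. Since $\rho\leqslant\tau$ we have $\emptyset\neq\mathring{\rho}\subseteq\tau=\overline{\mathring{\tau}}\subseteq\overline{Y}_{\sigma'}$ while also $\mathring{\rho}\subseteq Y_\sigma$, so $\overline{Y}_{\sigma'}\cap Y_\sigma\neq\emptyset$. Symmetrically, since $\tau\leqslant\rho'$ we have $\emptyset\neq\mathring{\tau}\subseteq\rho'=\overline{\mathring{\rho'}}\subseteq\overline{Y}_\sigma$ while $\mathring{\tau}\subseteq Y_{\sigma'}$, so $\overline{Y}_\sigma\cap Y_{\sigma'}\neq\emptyset$.

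Finally I would feed these two relations into the hypothesis. If the $X$-partition is covariant, then $\overline{Y}_{\sigma'}\cap Y_\sigma\neq\emptyset$ gives $\sigma\leqslant\sigma'$ and $\overline{Y}_\sigma\cap Y_{\sigma'}\neq\emptyset$ gives $\sigma'\leqslant\sigma$; if it is contravariant, the same two relations give $\sigma'\leqslant\sigma$ and $\sigma\leqslant\sigma'$ respectively. In either case $\sigma=\sigma'$, hence $\mathring{\tau}\in Y_\sigma$, which is precisely condition $(\ref{five})$. I do not expect a genuine obstacle here: the only points requiring care are tracking the two directions of the (co/contra)variance hypothesis correctly and using the right containment of an open face in the closed simplex it bounds; the finiteness of the pieces $Y_\sigma$ is not used, and the case $Y_\sigma=\emptyset$ is vacuous.
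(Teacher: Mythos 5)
Your proof is correct and is essentially the paper's own argument: both extract the two relations $\overline{Y}_{\sigma'}\cap Y_\sigma\neq\emptyset$ and $\overline{Y}_\sigma\cap Y_{\sigma'}\neq\emptyset$ from the face inclusions and then apply the (co/contra)variance condition in both directions to force $\sigma'=\sigma$. No gaps.
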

\begin{proof}
Let $\widetilde{\rho}\leqslant \widetilde{\tau}\leqslant \widetilde{\sigma}$ in $Y$ with $\mathring{\widetilde{\rho}}$, $\mathring{\widetilde{\sigma}}\in Y_\sigma$ and $\mathring{\widetilde{\tau}}\in Y_\tau$. By the definition of covariance and contravariance, we must have $\sigma\leqslant \tau \leqslant \sigma$ as $\mathring{\widetilde{\tau}}\subset \overline{Y}_\sigma$ and $\mathring{\widetilde{\rho}}\subset \overline{Y}_\tau$. Whence $\tau=\sigma$ and condition $(\ref{five})$ holds.
\end{proof}

\begin{cor}\label{asspart}
A covariant $X$-partition of $Y$ defines an assembly functor \[\GG_Y(\A) \to \GG_X(\A)\] by assembling each $Y_\sigma$ to $\mathring{\sigma}$. This functor restricts to an assembly functor \[\A(Y)\to \A(X).\]
A contravariant $X$-partition of $Y$ defines an assembly functor \[\GG_Y(\A) \to \GG_X(\A)\] by assembling each $Y_\sigma$ to $\mathring{\sigma}$. This functor restricts to an assembly functor \[\brc{\A^*(Y)\to \A_*(X)}{\A_*(Y)\to \A^*(X).}\]
\end{cor}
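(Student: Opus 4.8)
The plan is to write the functor down explicitly and then verify, in turn, that it is well defined, additive, functorial, and compatible with the triangular structure; only the last of these uses the covariance/contravariance hypothesis.

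On objects, send $M=\sum_{\widetilde\tau\in Y}M(\widetilde\tau)$ in $\GG_Y(\A)$ to the collection $\{\,M[Y_\sigma]\mid\sigma\in X\,\}$, where $M[Y_\sigma]=\sum_{\mathring{\widetilde\tau}\in Y_\sigma}M(\widetilde\tau)$ is the assembly over $Y_\sigma$ as in Definition \ref{assembleobjects}(i). Since $Y_\sigma$ is a finite union of open simplices of $Y$, each $M[Y_\sigma]$ is an honest object of $\A$, and since no local finiteness condition is imposed on objects of $\GG_X(\A)$, the collection $\{M[Y_\sigma]\}$ is a legitimate object of $\GG_X(\A)$. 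On morphisms, send $f\colon M\to N$ to the $X$-graded morphism $\widehat f$ with components $\widehat f_{\sigma,\rho}:=f_{[Y_\sigma],[Y_\rho]}$, the assembly of $f$ from $Y_\rho$ to $Y_\sigma$ as in Definition \ref{assembleobjects}(ii). To see $\widehat f$ really is a morphism of $\GG_X(\A)$ I would fix $\rho\in X$, write $Y_\rho=\{\mathring{\widetilde\rho}_1,\ldots,\mathring{\widetilde\rho}_k\}$, and note that $\bigcup_{j=1}^{k}\{\widetilde\sigma\in Y\mid f_{\widetilde\sigma,\widetilde\rho_j}\neq 0\}$ is finite because $f$ is a morphism of $\GG_Y(\A)$; each open simplex of $Y$ lies in a unique $Y_\sigma$ by the partition axioms, so only finitely many $\sigma$ can have $\widehat f_{\sigma,\rho}\neq 0$. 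Additivity of the assignment $f\mapsto\widehat f$ is immediate, as the assembly operation on morphisms is $\A$-linear entrywise.

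For functoriality, identities are handled by the disjointness axiom: $(\id_M)_{[Y_\sigma],[Y_\rho]}=0$ for $\sigma\neq\rho$ because $Y_\sigma\cap Y_\rho=\emptyset$, and $(\id_M)_{[Y_\sigma],[Y_\sigma]}=\id_{M[Y_\sigma]}$, so $\widehat{\id_M}$ is the identity. For composition, I would compute entrywise: the $(\widetilde\rho,\widetilde\tau)$-entry of $(\widehat{g\circ f})_{\rho,\tau}$ is $(g\circ f)_{\widetilde\rho,\widetilde\tau}=\sum_{\widetilde\sigma\in Y}g_{\widetilde\rho,\widetilde\sigma}f_{\widetilde\sigma,\widetilde\tau}$, while the same entry of $(\widehat g\circ\widehat f)_{\rho,\tau}=\sum_{\sigma\in X}\widehat g_{\rho,\sigma}\widehat f_{\sigma,\tau}$ equals $\sum_{\sigma\in X}\sum_{\mathring{\widetilde\sigma}\in Y_\sigma}g_{\widetilde\rho,\widetilde\sigma}f_{\widetilde\sigma,\widetilde\tau}$. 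Because $\{Y_\sigma\}_{\sigma\in X}$ partitions the set of open simplices of $Y$ (disjointness plus covering), these two finite sums agree, so $\widehat{g\circ f}=\widehat g\circ\widehat f$. This already gives an additive functor $\GG_Y(\A)\to\GG_X(\A)$ in both the covariant and the contravariant case.

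It remains to match up the triangular structure, which is the only step where the variance of the partition enters and, I expect, the only one needing any care. Suppose $f$ is a morphism of $\A^*(Y)$, so a nonzero entry $f_{\widetilde\sigma,\widetilde\tau}$ forces $\widetilde\sigma\leqslant\widetilde\tau$. If $\widehat f_{\sigma,\tau}\neq 0$, pick such an entry with $\mathring{\widetilde\sigma}\in Y_\sigma$, $\mathring{\widetilde\tau}\in Y_\tau$; then $\mathring{\widetilde\sigma}\subseteq\widetilde\tau=\overline{\mathring{\widetilde\tau}}\subseteq\overline{Y}_\tau$, so $\overline{Y}_\tau\cap Y_\sigma\neq\emptyset$, which by covariance gives $\sigma\leqslant\tau$ and by contravariance gives $\tau\leqslant\sigma$. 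Hence a covariant partition sends $\A^*(Y)$ into $\A^*(X)$ and a contravariant one sends $\A^*(Y)$ into $\A_*(X)$; running the argument with the face relation reversed does the same for $\A_*(Y)$, yielding the four cases in the statement. Finally, since the preceding lemma shows each $Y_\sigma$ satisfies $(\ref{five})$, Definition \ref{assembleobjects}(iii) applies, so when the functor is applied to a chain complex $C$ the diagonal blocks $C[Y_\sigma]$ are genuine $\A$-chain complexes and agree with the pieces $\widehat C(\sigma)$, showing the functor is compatible with all the earlier constructions. The real obstacle throughout is purely bookkeeping: keeping the two levels of indexing — open simplices of $Y$ versus the host simplices of $X$ they are grouped under — separate, so that the partition axioms and the variance condition are invoked at exactly the right points.
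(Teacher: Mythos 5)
Your proposal is correct and matches the paper's (implicit) argument: the paper offers no written proof of this corollary, treating it as immediate from the preceding lemma and Definition \ref{assembleobjects}, and your verification of functoriality via the partition axioms and of the triangular restriction via the variance condition is exactly the routine checking being omitted. In particular your use of $\mathring{\widetilde{\sigma}}\subseteq\overline{Y}_\tau$ to invoke covariance/contravariance is the right way to see where the four cases $\A^*(Y)\to\A^*(X)$, $\A^*(Y)\to\A_*(X)$, etc.\ come from.
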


\begin{rmk}\label{covcontra}
\begin{itemize}
 \item The $X$-partition of $Sd\, X$ into open dual cells, $\{\mathring{D}(\sigma, X)|\sigma \in X\}$, is contravariant.
 \item Let $Y$ be any subdivision of $X$ and $r:Y\to X$ any simplicial surjection. Then the $X$-partition of $Y$, $\{r^{-1}(\mathring{\sigma})|\sigma \in X\}$, is covariant.  
\end{itemize}
\end{rmk}

\begin{defn}\label{assemblyfunctors}
\begin{enumerate}[(i)]
 \item Let $r:Y \to X$ be any surjective simplicial map. Then $r$ induces an assembly functor $\rr_r: \mathbb{G}_{Y}(\A) \to \mathbb{G}_{X}(\A)$ defined by:
\begin{itemize}
 \item The assembly $\rr_r(M)$ of an object $M$ in $\mathbb{G}_{Y}(\A)$ is the object in $\mathbb{G}_{X}(\A)$ defined by \[\rr_r(M)(\sigma):= M[r^{-1}(\mathring{\sigma})].\]
 \item The assembly $\rr_r(f):\rr_r(M)\to \rr_r(N)$ of a morphism $f:M \to N$ in $\mathbb{G}_{Y}(\A)$ is the morphism in $\mathbb{G}_{X}(\A)$ defined by \[ ((\rr_r(f))_n)_{\tau,\sigma} := (f_n)_{[r^{-1}(\mathrm{\sigma})],[r^{-1}(\mathrm{\tau})]}.\] 
\end{itemize}
By Corollary \ref{asspart} and Remark \ref{covcontra}, $\rr_r$ restricts to give assembly functors
\[ \rr_r: \A(Y) \to \A(X).\]
 \item Define the assembly functor \[ \ttt: \mathbb{G}_{Sd\, X}(\A) \to \mathbb{G}_{X}(\A)).\] in the same manner as above by assembling each open dual cell $\mathring{D}(\sigma, X)$ in $Sd\, X$ to $\mathring{\sigma}$. By Corollary \ref{asspart} and Remark \ref{covcontra} this restricts to give functors \[ \ttt: \brc{\A^*(Sd\, X) \to \A_*(X)}{\A_*(Sd\, X) \to \A^*(X)}.\] 
\end{enumerate}
Both $\rr_r$ and $\ttt$ extend to $\BB$ of these categories in all the above cases.
 \item Given assembly functors $\rr_i:\GG_{X^\prime}(\A) \to \GG_{X}(\A)$ for $i=1,2$ defined by assembling the following $X$-partitions of $X^\prime$ \[\{Y_i(\sigma)|\sigma\in X\}\] and given a chain equivalence 
\begin{displaymath}
 \xymatrix@C=13mm{ (C,d_C,P_C) \ar@<0.5ex>[r]^-{f} & (D,d_D,P_D). \ar@<0.5ex>[l]^-{g}
}
\end{displaymath}
in $\GG_{X^\prime}(\A)$ define the \textit{assembly of this chain equivalence from $\rr_1$ to $\rr_2$} to be the chain equivalence in $\GG_{X}(\A)$ given by
\begin{displaymath}
 \xymatrix@C=13mm{ (\rr_1(C),d_{\rr_1(C)},(P_C)_{\rr_1,\rr_1}) \ar@<0.5ex>[r]^-{(f)_{\rr_2,\rr_1}} & (\rr_2(D),d_{\rr_2(D)},(P_D)_{\rr_2,\rr_2}). \ar@<0.5ex>[l]^-{(g)_{\rr_1,\rr_2}}
}
\end{displaymath}
where \[(((f)_{\rr_2,\rr_1})_n)_{\tau,\sigma}:= (f_n)_{[Y_2(\tau)],[Y_1(\sigma)]}\] and similarly for $g$, $P_C$ and $P_D$.
\end{defn}

\begin{rmk}\label{rrfunctorialwrtr}
Let $r=r_1\circ r_2$ be the composition of two surjective simplicial maps. Then \[\rr_{r_1}\circ \rr_{r_2} = \rr_{r}.\]
\end{rmk}

\section{Algebraic Subdivision}\label{section:algsubdiv}
Let $X^\prime$ be an iterated stellar subdivision of an f.d. l.f. simplicial complex. In this section we define an algebraic subdivision functor $Sd_r: \BB(\A(X)) \to \BB(\A(X^\prime))$ that generalises the effect that geometric subdivision has on the simplicial chain and cochain complexes of $X$ when considered as geometric chain complexes in $\BB(\A(X))$. In defining $Sd_r$ we use the crucial property of Proposition \ref{chcont} that two chain complexes in $\BB(\A(X))$ are chain equivalent if and only if they are locally chain equivalent in $\BB(\A)$ over each simplex of $X$. This means that one may replace each local $C(\tau)$ with a chain complex that is chain equivalent to $C(\tau)$, but distributed more finely over $X^\prime$. More precisely we view $C(\tau)\in \brc{\BB(\A^*(X))}{\BB(\A_*(X))}$ as \[ C(\tau)\otimes_\Z \Z = \brc{C(\tau)\otimes_\Z \Sigma^{-|\sigma|}\Delta_*(\tau,\partial\tau)}{C(\tau)\otimes_\Z \Sigma^{|\sigma|}\Delta^{-*}(\tau,\partial\tau).}\]A simplicial approximation to the identity $r:X^\prime \to X$ and a homotopy $P:\id_X \simeq r$ provide chain equivalences \[\brc{ \Delta^{lf}_{*}(X) \stackrel{\sim}{\longrightarrow} \Delta^{lf}_{*}(X^\prime) }{\Delta^{-*}(X) \stackrel{\sim}{\longrightarrow} \Delta^{-*}(X^\prime)}\] which restrict to local chain equivalences for all $\tau\in X$: \[\brc{\Delta_*(\tau,\partial\tau) \stackrel{\sim}{\longrightarrow} \Delta_{*}(X^\prime)[r^{-1}(\mathring{\tau})]}{\Delta^{-*}(\tau,\partial\tau) \stackrel{\sim}{\longrightarrow} \Delta^{-*}(X^\prime)[r^{-1}(\mathring{\tau})].}\] Thus defining $Sd_r:\BB(\A(X))\to \BB(\A(X^\prime))$ so that \[Sd_r\,C[r^{-1}(\mathring{\tau})] = \brc{C(\tau)\otimes_\Z \Sigma^{-|\sigma|}\Delta_*(\overline{r^{-1}(\mathring{\tau})}, \mathrm{Fr}\, r^{-1}(\mathring{\tau}))}{C(\tau)\otimes_\Z \Sigma^{|\sigma|}\Delta^{-*}(\overline{r^{-1}(\mathring{\tau})}, \mathrm{Fr}\, r^{-1}(\mathring{\tau}))} \]we have that for all $\tau\in X$, $\rr_r Sd_r\, C(\tau) \simeq C(\tau)$ in $\BB(\A)$ and hence $\rr_r Sd_r\, C \simeq C$ in $\BB(\A(X))$. 

We now spell out exactly how $Sd_r$ is defined to have this local form and to be functorial. As we are working with iterated stellar subdivisions we define $Sd_r$ first for a single stellar subdivision and then use this to define $Sd_r$ for the more general case. 

\begin{defn}
Let $r: (\sigma,\widehat{\sigma})X \to X$ be the simplicial approximation to the identity uniquely determined by a choice of vertex $r(\widehat{\sigma})\in \sigma$ to map $\widehat{\sigma}$ to. Define the \textit{algebraic subdivision functors} $Sd_r\,: \BB(\A(X)) \to \BB(\A(\sigma,\widehat{\sigma})X)$ by
\begin{enumerate}[(i)]
 \item for $C\in \BB(\A^*(X))$ and $\widetilde{\rho}$, $\widetilde{\sigma}\in (\sigma,\widehat{\sigma})X$ set 
\begin{align}
Sd_r\,(C)(\widetilde{\sigma})_n &= C(r(\widetilde{\sigma}))_{n-|\widetilde{\sigma}|+|r(\widetilde{\sigma})|}, \label{SdCn}
\end{align}
\begin{equation}\label{dSdconcise}
(d_{Sd_r\,C})_{\widetilde{\rho},\widetilde{\sigma},n} = \left\{ \begin{array}{cc} (d_C)_{r(\widetilde{\rho}),r(\widetilde{\sigma}),n - |\widetilde{\sigma}|+|r(\widetilde{\sigma})|}, & \widetilde{\rho}\leqslant\widetilde{\sigma},\;|\widetilde{\sigma}|-|r(\widetilde{\sigma})| = |\widetilde{\rho}|-|r(\widetilde{\rho})|, \\ (-1)^{n}\id_{C(r(\widetilde{\sigma}))_{n - |\widetilde{\sigma}|+|r(\widetilde{\sigma})|}}, & \widetilde{\rho}*r(\widehat{\sigma}) = \widetilde{\sigma}, \; \widehat{\sigma}\in \widetilde{\rho}, \\ (-1)^{n+1}\id_{C(r(\widetilde{\sigma}))_{n - |\widetilde{\sigma}|+|r(\widetilde{\sigma})|}}, & \widetilde{\rho}*\widehat{\sigma} = \widetilde{\sigma},\; r(\widehat{\sigma})\in \widetilde{\rho}, \\ 0, & \mathrm{otherwise}. \end{array} \right.
\end{equation}
\begin{equation}\label{Sdfconcise}
(Sd_rf)_{\widetilde{\rho},\widetilde{\sigma},n}:= \brcc{f_{r(\widetilde{\rho}),r(\widetilde{\sigma}),n-|\widetilde{\sigma}|+|r(\widetilde{\sigma})|,}}{\widetilde{\rho}\leqslant\widetilde{\sigma},\;|\widetilde{\sigma}|-|r(\widetilde{\sigma})| = |\widetilde{\rho}|-|r(\widetilde{\rho})|,}{0,}{\mathrm{otherwise},}
\end{equation}
\item for $C\in \BB(\A_*(X))$ and $\widetilde{\rho}$, $\widetilde{\sigma}\in (\sigma,\widehat{\sigma})X$ set 
\begin{align}
Sd_r\,(C)(\widetilde{\sigma})_n &= C(r(\widetilde{\sigma}))_{n+|\widetilde{\sigma}|-|r(\widetilde{\sigma})|}, \label{SdCnother}
\end{align}
\begin{equation}\label{dSdconciseother}
(d_{Sd_r\,C})_{\widetilde{\sigma},\widetilde{\rho},n} = \left\{ \begin{array}{cc} (d_C)_{r(\widetilde{\sigma}),r(\widetilde{\rho}),n + |\widetilde{\sigma}|-|r(\widetilde{\sigma})|}, & \widetilde{\rho}\leqslant\widetilde{\sigma},\;|\widetilde{\sigma}|-|r(\widetilde{\sigma})| = |\widetilde{\rho}|-|r(\widetilde{\rho})|, \\ (-1)^{n}\id_{C(r(\widetilde{\sigma}))_{n + |\widetilde{\sigma}|-|r(\widetilde{\sigma})|}}, & \widetilde{\rho}*r(\widehat{\sigma}) = \widetilde{\sigma}, \; \widehat{\sigma}\in \widetilde{\rho}, \\ (-1)^{n+1}\id_{C(r(\widetilde{\sigma}))_{n + |\widetilde{\sigma}|-|r(\widetilde{\sigma})|}}, & \widetilde{\rho}*\widehat{\sigma} = \widetilde{\sigma},\; r(\widehat{\sigma})\in \widetilde{\rho}, \\ 0, & \mathrm{otherwise}. \end{array} \right.
\end{equation}
\begin{equation}\label{Sdfconciseother}
(Sd_rf)_{\widetilde{\sigma},\widetilde{\rho},n}:= \brcc{f_{r(\widetilde{\rho}),r(\widetilde{\sigma}),n+|\widetilde{\sigma}|-|r(\widetilde{\sigma})|,}}{\widetilde{\rho}\leqslant\widetilde{\sigma},\;|\widetilde{\sigma}|-|r(\widetilde{\sigma})| = |\widetilde{\rho}|-|r(\widetilde{\rho})|,}{0,}{\mathrm{otherwise}.}
\end{equation}
\end{enumerate}
\end{defn}
These formulae do indeed define functors $Sd_r:\BB(\A(X))\to \BB(\A(X^\prime))$: the fact that $Sd_r\, C$ is a chain complex is more or less immediate and functoriality follows from the fact that for all $\widetilde{\rho}\leqslant\widetilde{\sigma}$ there is a one-one correspondence
\begin{displaymath}
 \xymatrix{ \left\{ \widetilde{\rho}\leqslant\widetilde{\tau}\leqslant\widetilde{\sigma} \right\} \ar@{<->}[r]^-{1:1} & \left\{ r(\widetilde{\rho})\leqslant \tau \leqslant r(\widetilde{\sigma}) \right\}.
}
\end{displaymath}

\begin{rmk}
Note that the definition of $Sd_r$ depends on a choice of simplicial approximation to the identity $r:X^\prime \to X$. It is not possible to define a subdivision functor $Sd: \BB(\A(X)) \to \BB(\A(X^\prime))$ canonically but we will observe that the dependence on the choice does not matter in the cases we care about the most and in fact being able to choose $r$ will be an advantage.
\end{rmk}

\begin{rmk}
Note that \[(Sd_r\, C)|_{r^{-1}(\mathring{\tau})} = \brcc{ C(\tau) \otimes \Sigma^{-|\tau|}\Delta_*(\overline{r^{-1}(\mathring{\tau})},\mathrm{Fr}\, r^{-1}(\mathring{\tau})),}{C\in \BB(\A^*(X))}{C(\tau) \otimes \Sigma^{|\tau|}\Delta^{-*}(\overline{r^{-1}(\mathring{\tau})},\mathrm{Fr}\, r^{-1}(\mathring{\tau})),}{C\in \BB(\A_*(X)).}\] Hence it is possible to express $Sd_r\, C$ as a naive componentwise tensor product. Defining this tensor product more rigorously is another way to see the functoriality of $Sd_r$.
\end{rmk}

\begin{thm}\label{Thm:subdivassemblechainequiv}
Let $X^\prime= (\sigma,\widehat{\sigma})X$. A choice of simplicial approximation to the identity, $r:X^\prime \to X$, defines an algebraic subdivision functor $Sd_r\,: \BB(\A(X)) \to \BB(\A(X^\prime))$ such that there is a canonical chain equivalence $\rr_r Sd_r\, C\simeq C \in \BB(\A(X))$ for all $C\in \BB(\A(X))$. 
\end{thm}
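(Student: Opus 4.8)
The plan is to verify directly that the formulae of the preceding definition produce a functor and that $\rr_r Sd_r\, C$ is canonically chain equivalent to $C$, treating the two cases $\A^*(X)$ and $\A_*(X)$ in parallel (they are formally dual). First I would check that $Sd_r\, C$ is a chain complex and $Sd_r$ is well-defined on morphisms: the triangularity condition for $\A(X')$ is built into the defining formulae, the local finiteness is inherited from that of $C$ via the finite-to-one map $r$, and $(d_{Sd_r\, C})^2 = 0$ reduces, after grouping the four types of matrix entries, to the identity $(d_C)^2 = 0$ together with the sign bookkeeping of the $\pm\id$ terms (these are exactly the differentials of a mapping-cone-type construction, so the cross terms cancel in pairs). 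Functoriality ($Sd_r(g\circ f) = Sd_r(g)\circ Sd_r(f)$ and $Sd_r(\id) = \id$) follows from the stated one-to-one correspondence $\{\widetilde{\rho}\leqslant\widetilde{\tau}\leqslant\widetilde{\sigma}\}\leftrightarrow\{r(\widetilde{\rho})\leqslant\tau\leqslant r(\widetilde{\sigma})\}$, which lets one match the summation defining a composite in $\A(X')$ with the corresponding composite in $\A(X)$.

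Next I would identify $(Sd_r\, C)|_{r^{-1}(\mathring\tau)}$ with the componentwise tensor product $C(\tau)\otimes_\Z \Sigma^{\mp|\tau|}\Delta_*(\overline{r^{-1}(\mathring\tau)},\Fr\, r^{-1}(\mathring\tau))$ (respectively $\Delta^{-*}$), as recorded in the remark following the definition; for a single stellar subdivision at $\widehat\sigma$, $\overline{r^{-1}(\mathring\tau)}$ is either $\tau$ itself (when $\sigma\not\leqslant\tau$) or $\widehat\sigma * \partial\sigma * \link$-type cell, whose relative simplicial chain complex is an explicit small complex. Assembling, $\rr_r Sd_r\, C(\tau) = (Sd_r\, C)[r^{-1}(\mathring\tau)] = C(\tau)\otimes_\Z \Delta_*(\overline{r^{-1}(\mathring\tau)},\Fr\,r^{-1}(\mathring\tau))$ up to the suspension shift. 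The point is now that $r$ restricted to $\overline{r^{-1}(\mathring\tau)}$ is a simplicial approximation to the identity on the cell $\tau$ rel its frontier, so it induces a chain equivalence $\Delta_*(\overline{r^{-1}(\mathring\tau)},\Fr)\simeq \Delta_*(\tau,\partial\tau) = \Sigma^{|\tau|}\Z$ (and dually for cochains), with an explicit homotopy inverse and homotopies coming from a chosen homotopy $P:\id\simeq r$. Tensoring with $C(\tau)$ gives $\rr_r Sd_r\, C(\tau)\simeq C(\tau)$ in $\BB(\A)$, naturally in $\tau$ and compatibly with the face inclusions, and hence Proposition \ref{chcont}(ii) upgrades this to a chain equivalence $\rr_r Sd_r\, C\simeq C$ in $\BB(\A(X))$; canonicity means the equivalence and its homotopies are produced by an explicit formula once $r$ and $P$ are fixed, so no choices beyond $r$ enter.

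The main obstacle I expect is the second step done \emph{with control over the off-diagonal behaviour}: Proposition \ref{chcont} promotes a compatible family of local equivalences to a global one, but to invoke it cleanly I need the local chain equivalences $\rr_r Sd_r\, C(\tau)\simeq C(\tau)$ to be genuinely natural with respect to all face maps $\rho\leqslant\tau$ of $X$, i.e.\ the squares built from the $X$-graded differential of $C$ must commute on the nose (or up to a coherent homotopy that itself assembles). Making the tensor-product description of $Sd_r\, C$ and of the equivalence $\Delta_*(\overline{r^{-1}(\mathring\tau)},\Fr)\simeq\Sigma^{|\tau|}\Z$ strictly functorial in $\tau$ — so that the off-diagonal entries of $d_C$ transport correctly and the assembled homotopies satisfy the chain-homotopy identity — is the delicate bookkeeping. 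I would handle it by fixing one triangulation of $\mathrm{Prism}(X,X')$ (Remark \ref{prismremark}) once and for all, defining the equivalence and homotopies via the induced prism chain homotopy, and then checking naturality simplex-by-simplex using the correspondence of chains of faces above; the signs are forced by the mapping-cone formula for $d_{Sd_r\, C}$ and match those in the definition of an $\A$-chain homotopy.
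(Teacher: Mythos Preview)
Your approach is correct and close in spirit to the paper's, but the paper organises the argument in a way that dissolves the ``main obstacle'' you flag. Rather than building local equivalences $\rr_r Sd_r\,C(\tau)\simeq C(\tau)$ and then worrying about how to assemble them compatibly with face maps, the paper writes down global morphisms $s_*:C\to \rr_r Sd_r\,C$ and $r_*:\rr_r Sd_r\,C\to C$ in $\A(X)$ by explicit componentwise formula from the outset: the diagonal components $(s_*)_{\tau,\tau}$ and $(r_*)_{\tau,\tau}$ are exactly your $\id_{C(\tau)}\otimes\Sigma^{-|\tau|}s_{\tau,\tau}$ and $\id_{C(\tau)}\otimes\Sigma^{-|\tau|}r_{\tau,\tau}$, while the off-diagonal components of $s_*$ are given in closed form in terms of the off-diagonal pieces of $d_C$ (and $r_*$ is diagonal). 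The verification that these are chain maps is a direct (if lengthy) case-check, pushed to the Appendix. Once $s_*$ and $r_*$ are known to be global chain maps in $\A(X)$, Proposition~\ref{chcont}(ii) needs only that the diagonal pieces are mutually inverse equivalences in $\A$, and for that your tensor-product analysis and the homotopy $P:\id\simeq r$ suffice exactly as you describe.

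So the naturality/compatibility problem you identify is real, but the paper solves it by brute-force explicit formula for the off-diagonal entries rather than by a prism construction. Your prism route would also work in principle, but it would amount to re-deriving those same formulae; going straight to them is shorter and makes the later squeezing arguments (which need concrete control over $s_*$, $r_*$, and the homotopies) more transparent.
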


\begin{proof}
We prove the Theorem for $\A^*(X)$, the proof for $\A_*(X)$ is similar. We proceed by constructing chain maps $s_*: C \to \rr_r Sd_r C$ and $r_*: \rr_r Sd_r C \to C$. Then, by Proposition \ref{chcont}, it suffices to find local chain homotopies in $\A$: $\id_{C(\tau)} \simeq (r_*)_{\tau,\tau}(s_*)_{\tau,\tau}$ and $\id_{\rr_r Sd_r C(\tau)} \simeq (s_*)_{\tau,\tau}(r_*)_{\tau,\tau}$ for all $\tau \in X$.

Define the chain map $s_*: C \to \rr_r Sd_r C$ by setting $(s_*)_{\rho,\tau,n}: C(\tau)_n \to \rr_r Sd_r C (\rho)_n$ to be the map with component to $Sd_r C (\widetilde{\rho})$ given by
\[(s_*)_{\widetilde{\rho},\tau,n} = \left\{ \begin{array}{cc} (\id_{C(\tau)})_n, & \widetilde{\rho} = \tau,\;\mathrm{or}\; \sigma\leqslant r(\widetilde{\rho}) = \tau \, \mathrm{and} \, |\widetilde{\rho}|=|\tau|, \\ (-1)^{n+1}(d_C)_{r(\widetilde{\rho}),\tau,n}, & \tau = \sigma * \tildetilde{\tau},\; \widetilde{\rho} = \widehat{\sigma}*r(\widehat{\sigma})*\tildetilde{\sigma}*\tildetilde{\rho},\; \tildetilde{\sigma}<\sigma,\; \tildetilde{\rho}\leqslant \tildetilde{\tau} \in \link(\sigma,X), \\ 0, & \mathrm{otherwise.}\end{array} \right.\]
Note that the component $(s_*)_{\tau,\tau,n}$ has the following form for all $\tau\in X$:
\begin{displaymath}
 \hspace{-15mm}\xymatrix@R=1mm@C=0mm{ (s_*)_{\tau,\tau,n}:C(\tau)_n \ar@{}[d]|-{\quad\equalsdown \quad\quad\quad \equalsdown }  \ar[rrrrr] &&&&& \rr_r Sd_r\, C(\tau)_n \ar@{}[d]|-{\equalsdown} \\
 \quad\quad\quad(\id_{C(\tau)})_n \otimes (\Sigma^{-|\tau|}s_{\tau,\tau})_0:C(\tau)_n\otimes (\Sigma^{-|\tau|}\Delta^{lf}_*(\tau,\partial \tau))_0 \ar[rrrrr] &&&&& C(\tau)_n \otimes(\Sigma^{-|\tau|}\Delta^{lf}_*(\overline{I_\tau},\mathrm{Fr} I_\tau))_0
}
\end{displaymath}
where $s=(\id_X)_*:\Delta^{lf}_*(X)\to \Delta^{lf}_*(X^\prime)$ is the induced map on locally finite simplicial chains and $I_\tau$ is shorthand for $r^{-1}(\mathring{\tau})$.

Define the chain map $r_*: \rr_r Sd_r\, C \to C$ by setting 
\begin{displaymath}
 \hspace{-10mm}\xymatrix@R=1mm@C=0mm{ (r_*)_{\tau,\tau,n}:\rr_r Sd_r\, C(\tau)_n \ar@{}[d]|-{\equalsdown \quad\quad\quad \equalsdown \quad\quad }  \ar[rrrrr] &&&&& C(\tau)_n \ar@{}[d]|-{\equalsdown} \\
 \quad(\id_{C(\tau)})_n \otimes (\Sigma^{-|\tau|}r_{\tau,\tau})_0:C(\tau)_n \otimes(\Sigma^{-|\tau|}\Delta^{lf}_*(\overline{I_\tau},\mathrm{Fr} I_\tau))_0 \ar[rrrrr] &&&&& C(\tau)_n\otimes (\Sigma^{-|\tau|}\Delta^{lf}_*(\tau,\partial \tau))_0
}
\end{displaymath}
and $(r_*)_{\rho,\tau,n} = 0$ for $\rho \neq \tau$. Written out in components this is 
\[ (r_*)_{\tau,\widetilde{\rho},n} = \brcc{\id_{C(\tau)}: Sd_r\, C(\widetilde{\rho})_n \to C(\tau)_n,}{r(\widetilde{\rho}) = \tau, \; |\widetilde{\rho}|=|\tau|,}{0,}{\mathrm{otherwise.}} \]

The verification that $s_*$ and $r_*$ are indeed chain maps is technical and rather unilluminating so this is relegated to the Appendix.

Observe that 
\begin{align}
(r_*s_*)_{\tau,\tau} &= (r_*)_{\tau,\tau}(s_*)_{\tau,\tau} \notag \\
&=id_{C(\tau)}\otimes \Sigma^{-|\tau|}r_{\tau,\tau} \circ id_{C(\tau)}\otimes \Sigma^{-|\tau|}s_{\tau,\tau} \notag \\
&= \id_{C(\tau)} \notag
\end{align}
as $r_{\tau,\tau} \circ s_{\tau,\tau} = \id_{\Delta^{lf}_*(\tau,\partial\tau)}$.

Since $r$ is a simplicial approximation to the identity, there is a $P:r\simeq \id$ which induces a chain homotopy $P:r_*s_* \simeq \id$ on simplicial chains. This restricts to a local chain homotopy $P_{\tau,\tau}: r_{\tau,\tau}s_{\tau,\tau}\simeq \id$ which in turn induces a local chain homotopy \[(\id_{C(\tau)}\otimes \Sigma^{-|\tau|}P_{\tau,\tau}:(s_*r_*)_{\tau,\tau} \simeq \id_{C(\tau)}.\]
An application of Proposition \ref{chcont} provides a $\BB(\A(X))$ chain equivalence $\rr_r Sd_r\, C \simeq C$ as required. 
\end{proof}

\begin{defn}\label{disjointstellars}
Let $X^\prime$ be a simultaneous disjoint stellar subdivision of $X$, obtained by simultaneously stellar subdividing the collection of simplices $\{\sigma_i\}_{i\in I}$. Let $r:X^\prime \to X$ be the simplicial approximation to the identity determined by a choice $r(\widehat{\sigma}_i)\in \sigma_i$ for all $i\in I$.

Define the \textit{algebraic subdivision functor} $Sd_r: \BB(\A(X)) \to \BB(\A(X^\prime))$ by 
\begin{align}
Sd_r|_{X \backslash \bigcup_{i\in I}\mathrm{st(\sigma_i)}} &:= \id, \notag \\
Sd_r|_{\mathrm{St}(\sigma_i)} &:= Sd_{r_i}|_{\mathrm{St}(\sigma_i)},\quad \forall i\in I, \notag
\end{align}
where $Sd_{r_i}$ is the algebraic subdivision functor for the single stellar subdivision $(\sigma_i,\widehat{\sigma}_i)X$, defined with $r_i:(\sigma_i,\widehat{\sigma}_i)X\to X$ given by the choice $r_i(\widehat{\sigma}_i) = r(\widehat{\sigma}_i)$.

For $X$ an iterated stellar subdivision define $Sd_r$ as the composition of the subdivision functors for each single simultaneous disjoint stellar subdivision:
\[Sd_r:= Sd_{r_n}\circ \ldots \circ Sd_{r_0},\]
where the simplicial approximation to the identity $r:X^\prime \to X$ is the composition $r= r_0 \circ r_1 \circ \ldots \circ r_n$.
\end{defn}

\begin{prop}\label{chainequivforcompositions}
Let $X^\prime$ be an iterated stellar subdivision of $X$. Then $\rr_r Sd_r\, C\simeq C \in \BB(\A(X))$
\end{prop}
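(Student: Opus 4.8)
The plan is to bootstrap Proposition~\ref{chainequivforcompositions} from Theorem~\ref{Thm:subdivassemblechainequiv}, the single-stellar-subdivision case, together with the fact (Definition~\ref{disjointstellars}) that $Sd_r$ for an iterated stellar subdivision is \emph{by construction} a composition $Sd_{r_n}\circ\cdots\circ Sd_{r_0}$ and that the corresponding simplicial approximation factors as $r = r_0\circ r_1\circ\cdots\circ r_n$. The whole proof is therefore an induction on the number of simultaneous disjoint stellar subdivisions making up $X^\prime$, and the only genuinely new ingredient needed is the functoriality of the assembly functors $\rr_r$ with respect to composition of $r$, which is precisely Remark~\ref{rrfunctorialwrtr}: $\rr_{r_1}\circ\rr_{r_2} = \rr_{r_1\circ r_2}$.

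First I would handle the base case of a single simultaneous disjoint stellar subdivision $X^\prime$ obtained from a disjoint collection $\{\sigma_i\}_{i\in I}$. Here $Sd_r$ restricts to the identity away from $\bigcup_i\mathrm{st}(\sigma_i)$ and to $Sd_{r_i}$ on each $\mathrm{St}(\sigma_i)$. Because the open stars are pairwise disjoint, the chain equivalence $\rr_r Sd_r\,C\simeq C$ assembles from the local data: on the complement of the stars it is the identity, and on $\mathrm{St}(\sigma_i)$ Theorem~\ref{Thm:subdivassemblechainequiv} supplies a canonical chain equivalence $\rr_{r_i}Sd_{r_i}\,C\simeq C$. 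Invoking Proposition~\ref{chcont}, it suffices to check that $(\rr_r Sd_r\,C)(\tau)\simeq C(\tau)$ in $\BB(\A)$ for every $\tau\in X$, and this local equivalence is witnessed either by the identity (when $\tau$ lies in no relevant star) or by the local chain homotopies produced in the proof of Theorem~\ref{Thm:subdivassemblechainequiv} (when $\tau\leqslant\sigma_i$ for some $i$); disjointness of the stars guarantees these cases are mutually exclusive so there is no clash.

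For the inductive step, write $X^\prime$ as obtained from $X$ by first performing the simultaneous disjoint stellar subdivision yielding $X_1$ (with simplicial approximation $r_0\colon X_1\to X$), and then the remaining iterated stellar subdivision yielding $X^\prime$ from $X_1$ (with simplicial approximation $r'\colon X^\prime\to X_1$, so $r = r_0\circ r'$ and $Sd_r = Sd_{r'}\circ Sd_{r_0}$). By the inductive hypothesis applied to $X^\prime$ as an iterated stellar subdivision of $X_1$, we have $\rr_{r'}Sd_{r'}\,D\simeq D$ in $\BB(\A(X_1))$ for every $D\in\BB(\A(X_1))$; take $D = Sd_{r_0}\,C$. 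Then $\rr_{r'}Sd_{r'}Sd_{r_0}\,C\simeq Sd_{r_0}\,C$ in $\BB(\A(X_1))$. Now apply the assembly functor $\rr_{r_0}\colon\BB(\A(X_1))\to\BB(\A(X))$, which preserves chain equivalences (it is a functor of additive categories sending chain maps, homotopies and contractions to the same), to obtain $\rr_{r_0}\rr_{r'}Sd_{r'}Sd_{r_0}\,C\simeq \rr_{r_0}Sd_{r_0}\,C$ in $\BB(\A(X))$. By Remark~\ref{rrfunctorialwrtr} the left side is $\rr_r Sd_r\,C$, and by the base case the right side is chain equivalent to $C$; composing the two equivalences gives $\rr_r Sd_r\,C\simeq C\in\BB(\A(X))$.

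The main obstacle, such as it is, is bookkeeping rather than anything deep: one must be careful that the factorisation $r = r_0\circ r_1\circ\cdots\circ r_n$ recorded in Definition~\ref{disjointstellars} is genuinely compatible, simplex by simplex, with the identifications $\rr_{r_1}\circ\rr_{r_2} = \rr_{r_1\circ r_2}$ of Remark~\ref{rrfunctorialwrtr}, since each intermediate $r_j$ is only a simplicial \emph{approximation} to the identity and composites of approximations to the identity are again approximations to the identity. One should also note that the resulting chain equivalence is natural enough to be called canonical in the same sense as in Theorem~\ref{Thm:subdivassemblechainequiv}: it is built by assembling the canonical local homotopies at each stage, so the word ``canonical'' propagates through the induction. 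Everything else is a direct application of Proposition~\ref{chcont}, Theorem~\ref{Thm:subdivassemblechainequiv}, and Remark~\ref{rrfunctorialwrtr}.
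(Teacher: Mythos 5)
Your proposal is correct and follows essentially the same route as the paper: the paper's (much terser) proof likewise reduces to a single simultaneous disjoint stellar subdivision, handled by Theorem~\ref{Thm:subdivassemblechainequiv} on each open star and the identity elsewhere, and then disposes of the iterated case by composing chain equivalences. Your inductive step merely makes explicit what the paper leaves implicit, namely that $\rr_{r_0}$ preserves chain equivalences and that Remark~\ref{rrfunctorialwrtr} identifies $\rr_{r_0}\circ\rr_{r'}$ with $\rr_r$.
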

\begin{proof}
It suffices to prove the result for a single simultaneous stellar subdivision as we can compose chain equivalences. For a single simultaneous stellar subdivision by Theorem \ref{Thm:subdivassemblechainequiv} we know the result holds over the open star of each simplex being subdivided and we can glue by the identity elsewhere. 
\end{proof}

The following is a straight-forward observation.
\begin{rmk}
Let $Sd_r:\BB(\A(X))\to \BB(\A(X^\prime))$ be as in Definition \ref{disjointstellars}. Then $D\simeq Sd_r\, C\in \BB(\A(X^\prime))$ if and only if $Sd_r\,\rr_r D \simeq D\in \BB(\A(X^\prime))$
\end{rmk}

\section{Properties of algebraic subdivision}\label{section:algsubdivprops}
In this section we note a few properties of the subdivision functors and verify the claim that algebraic subdivision generalises the algebraic effect that barycentric subdivision has on the simplicial chain and cochain complexes. Fix any iterated stellar subdivision $X^\prime$ of $X$ and algebraic subdivision functors $Sd_r:\BB(\A(X)) \to \BB(\A(X^\prime))$ defined using the simplicial approximation to the identity $r:X^\prime \to X$.

\begin{lem}\label{Lem:SubcontIsCont}
$C\simeq 0 \in \A(X)$ if and only if $Sd_r\, C \simeq 0 \in \A(X^\prime)$. 
\end{lem}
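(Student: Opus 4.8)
The plan is to leverage the key property of Proposition~\ref{chcont} that chain contractibility in $\A(X)$ (respectively $\A(X^\prime)$) is detected locally over each simplex, and then to show that the local chain homotopy types over simplices of $X$ and over simplices of $X^\prime$ determine one another. First I would reduce to the case of a single simultaneous disjoint stellar subdivision, since $Sd_r$ for an iterated stellar subdivision is a composition of such functors (Definition~\ref{disjointstellars}) and chain contractibility is preserved and reflected stepwise through a composition. Then I would reduce further to a single stellar subdivision $X^\prime = (\sigma,\widehat{\sigma})X$: outside $\bigcup_i \mathrm{st}(\sigma_i)$ the functor $Sd_r$ is the identity, so nothing happens there, and over each $\mathrm{St}(\sigma_i)$ it agrees with the single-stellar functor $Sd_{r_i}$, so the local analysis localises to one star at a time.

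For the single stellar subdivision, the forward direction is the easy half and in fact already follows from Theorem~\ref{Thm:subdivassemblechainequiv}: if $C\simeq 0\in\A(X)$ then $\rr_r Sd_r\,C\simeq C\simeq 0\in\A(X)$, but I want the stronger statement that $Sd_r\,C$ itself is contractible upstairs. For this I would appeal to Proposition~\ref{chcont}(i) applied to $X^\prime$: it suffices to show $Sd_r\,C(\widetilde{\sigma})\simeq 0\in\A$ for every $\widetilde{\sigma}\in X^\prime$. By the defining formula~(\ref{SdCn}), $Sd_r\,C(\widetilde{\sigma})_n = C(r(\widetilde{\sigma}))_{n-|\widetilde{\sigma}|+|r(\widetilde{\sigma})|}$ with diagonal differential $(d_C)_{r(\widetilde{\sigma}),r(\widetilde{\sigma})}$ suitably reindexed, so $Sd_r\,C(\widetilde{\sigma})$ is simply a suspension/shift of the chain complex $C(r(\widetilde{\sigma}))$; a chain contraction of $C(r(\widetilde{\sigma}))$ in $\A$ shifts to one of $Sd_r\,C(\widetilde{\sigma})$. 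Since $C\simeq 0\in\A(X)$ gives $C(\tau)\simeq 0$ for all $\tau\in X$ by Proposition~\ref{chcont}(i) downstairs, in particular $C(r(\widetilde{\sigma}))\simeq 0$, and we are done.

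For the converse, suppose $Sd_r\,C\simeq 0\in\A(X^\prime)$. By Proposition~\ref{chcont}(i) we get $Sd_r\,C(\widetilde{\sigma})\simeq 0$ for all $\widetilde{\sigma}\in X^\prime$; taking any $\widetilde{\sigma}$ with $r(\widetilde{\sigma})=\tau$ and $|\widetilde{\sigma}|=|\tau|$ (such a $\widetilde{\sigma}$ exists for every $\tau\in X$, e.g.\ $\tau$ itself when $\sigma\not\leq\tau$, or an appropriate top face of the subdivided $\sigma*\cdots$ otherwise), the formula~(\ref{SdCn}) gives $Sd_r\,C(\widetilde{\sigma}) = C(\tau)$ on the nose, with matching diagonal differential. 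Hence $C(\tau)\simeq 0\in\A$ for all $\tau\in X$, and Proposition~\ref{chcont}(i) downstairs yields $C\simeq 0\in\A(X)$. The case of $\A_*(X)$ is identical using~(\ref{SdCnother}) in place of~(\ref{SdCn}).

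The only place requiring genuine care — the main obstacle, such as it is — is the bookkeeping in the converse direction: checking that for every $\tau\in X$ there is at least one simplex $\widetilde{\sigma}\in X^\prime$ with $r(\widetilde{\sigma})=\tau$, $|\widetilde{\sigma}|=|\tau|$, and that the single-simplex complex $Sd_r\,C(\widetilde{\sigma})$ then really is $C(\tau)$ with its original differential (no shift, since $|\widetilde{\sigma}|-|r(\widetilde{\sigma})|=0$). This is immediate from the definition of stellar subdivision and of $r$, but it is the one step that uses more than a formal citation of Proposition~\ref{chcont}; everything else is shifting suspensions and composing implications.
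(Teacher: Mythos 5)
Your proposal is correct and is essentially the paper's argument: both reduce via Proposition~\ref{chcont} to local contractibility over each simplex and then use the fact that $Sd_r\,C(\widetilde{\sigma}) = \Sigma^{|r(\widetilde{\sigma})|-|\widetilde{\sigma}|}C(r(\widetilde{\sigma}))$ together with surjectivity of $r$, the paper writing this as a single chain of equivalences without your (harmless but unnecessary) preliminary reduction to a single stellar subdivision or the restriction to $\widetilde{\sigma}$ with $|\widetilde{\sigma}|=|\tau|$, since a suspension is contractible if and only if the original complex is.
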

\begin{proof}
By Proposition \ref{chcont} and Definition \ref{disjointstellars}, 
\begin{align}
C\simeq 0 \in \A(X) &\Longleftrightarrow \forall \sigma \in X, C(\sigma)\simeq 0 \in \A \notag \\
&\Longleftrightarrow \forall \widetilde{\sigma}\in X^\prime, Sd_r\, C(\widetilde{\sigma}) = \Sigma^{|r(\widetilde{\sigma})|-|\widetilde{\sigma}|}C(r(\widetilde{\sigma})) \simeq 0 \in \A \notag \\
&\Longleftrightarrow Sd_r\, C \simeq 0 \in \A(X^\prime). \notag
\end{align}
\end{proof}

\begin{lem}\label{Sdcommuteswithcones}
The subdivision functor $Sd_r\,$ commutes with taking algebraic mapping cones: \[ Sd_r \C (f:C\to D) = \C(Sd_r\, f: Sd_r\, C \to Sd_r\, D).\]
\end{lem}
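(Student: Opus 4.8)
The plan is to verify the claimed equality of chain complexes in $\BB(\A(X^\prime))$ componentwise, simplex by simplex, using the explicit formulae of Definition~\ref{disjointstellars} (which reduce, via the decomposition $Sd_r = \id$ away from the subdivided stars and $Sd_r = Sd_{r_i}$ on each star, to the single-stellar formulae \eqref{SdCn}--\eqref{Sdfconcise} and their $\A_*$ analogues). Since $Sd_r$ for an iterated stellar subdivision is a composition of the single-stellar functors, and composition of functors certainly preserves any identity of the form ``$Sd_r$ commutes with $\C(-)$'', it suffices to treat the case of a single stellar subdivision $X^\prime = (\sigma,\widehat\sigma)X$, and there it suffices to compare the two sides over each $\widetilde\sigma \in X^\prime$ in each degree $n$.

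First I would unwind both sides as graded objects. On one hand $\C(f:C\to D)_n = C_n \oplus D_{n+1}$, so $\bigl(Sd_r\,\C(f)\bigr)(\widetilde\sigma)_n = \C(f)(r(\widetilde\sigma))_{n - |\widetilde\sigma| + |r(\widetilde\sigma)|} = C(r(\widetilde\sigma))_{n-|\widetilde\sigma|+|r(\widetilde\sigma)|} \oplus D(r(\widetilde\sigma))_{n+1-|\widetilde\sigma|+|r(\widetilde\sigma)|}$ (using \eqref{SdCn}). On the other hand, $\C(Sd_r f)_n = (Sd_r C)_n \oplus (Sd_r D)_{n+1}$, which over $\widetilde\sigma$ is $(Sd_r C)(\widetilde\sigma)_n \oplus (Sd_r D)(\widetilde\sigma)_{n+1} = C(r(\widetilde\sigma))_{n-|\widetilde\sigma|+|r(\widetilde\sigma)|} \oplus D(r(\widetilde\sigma))_{n+1-|\widetilde\sigma|+|r(\widetilde\sigma)|}$, again by \eqref{SdCn}. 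These visibly agree, so the underlying graded objects are literally equal.

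Next I would compare the differentials. Write $m = n - |\widetilde\sigma| + |r(\widetilde\sigma)|$ for the degree shift. The boundary of $\C(Sd_r f)$ in the $(\widetilde\rho,\widetilde\sigma)$ entry is the $2\times 2$ matrix $\left(\begin{smallmatrix} (d_{Sd_r C})_{\widetilde\rho,\widetilde\sigma,n} & 0 \\ (Sd_r f)_{\widetilde\rho,\widetilde\sigma,n} & -(d_{Sd_r D})_{\widetilde\rho,\widetilde\sigma,n+1} \end{smallmatrix}\right)$. Substituting \eqref{dSdconcise} and \eqref{Sdfconcise} for each of the three nonzero cases ($\widetilde\rho\leqslant\widetilde\sigma$ with equal defect; $\widetilde\rho * r(\widehat\sigma) = \widetilde\sigma$; $\widetilde\rho * \widehat\sigma = \widetilde\sigma$), in the first case this matrix becomes $\left(\begin{smallmatrix} (d_C)_{r(\widetilde\rho),r(\widetilde\sigma),m} & 0 \\ f_{r(\widetilde\rho),r(\widetilde\sigma),m} & -(d_D)_{r(\widetilde\rho),r(\widetilde\sigma),m+1} \end{smallmatrix}\right)$, which is exactly $(d_{\C(f)})_{r(\widetilde\rho),r(\widetilde\sigma),m}$, i.e.\ the $(\widetilde\rho,\widetilde\sigma,n)$ entry of $d_{Sd_r \C(f)}$; in the other two cases the off-diagonal $f$-term vanishes and the diagonal signed-identity entries $(-1)^n\id$ (resp.\ $(-1)^{n+1}\id$) on $C(r(\widetilde\sigma))_m$ and the $(-1)^{n+1}\id$ (resp.\ $(-1)^{n+2}\id = (-1)^n\id$) on $D(r(\widetilde\sigma))_{m+1}$ assemble precisely into $(-1)^n\id$ (resp.\ $(-1)^{n+1}\id$) on the direct sum $C(r(\widetilde\sigma))_m \oplus D(r(\widetilde\sigma))_{m+1} = \C(f)(r(\widetilde\sigma))_m$, which again matches $(d_{Sd_r\C(f)})_{\widetilde\rho,\widetilde\sigma,n}$. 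The $\A_*$ case is identical with the shift $m = n + |\widetilde\sigma| - |r(\widetilde\sigma)|$ and formulae \eqref{dSdconciseother}, \eqref{Sdfconciseother}.

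I do not expect a genuine obstacle here; the only thing requiring care is bookkeeping of the degree shift $m$ and of the signs, in particular checking that the $(-1)^{n+1}$ appearing in the $D$-entry of $(d_{\C(f)})_{\cdot,\cdot,m}$ (the $-(d_D)_{m+1}$) is compatible with the sign $(-1)^{(n+1)}$ that \eqref{dSdconcise} attaches to the $\id$ on $(Sd_r D)$ in degree $n+1$ versus what it attaches to $(Sd_r C)$ in degree $n$ — these are consistent because $\C(f)$ shifts $D$ up by one degree, exactly cancelling the discrepancy. Once the componentwise comparison is done in all cases, the equality of functors follows for single stellar subdivisions, hence for simultaneous disjoint ones by the ``glue by the identity'' description, hence for iterated stellar subdivisions by composing, completing the proof.
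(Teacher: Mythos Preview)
Your proposal is correct and follows essentially the same approach as the paper: reduce to a single stellar subdivision, verify that the underlying graded objects agree via \eqref{SdCn}, and then check the differentials componentwise in each of the three cases of \eqref{dSdconcise}, tracking the sign interaction between the degree shift in $\C(f)$ and the $(-1)^n$ factors. The paper's proof is structured identically, writing out the same $2\times 2$ matrix comparison in each case.
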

\begin{proof}
It suffices to prove the result for a single stellar subdivision as a gluing and composition argument give it in the more general case, so let $Sd_r: \BB(\A(X)) \to \BB(\A((\sigma, \widehat{\sigma})X))$. We prove the $\A^*$ case, the $\A_*$ case is analogous.

First we verify that the $n$-chains are exactly the same:
\begin{align}
Sd_r \C (f: C\to D)(\widetilde{\rho})_n &= \C(f:C\to D)(r(\widetilde{\rho}))_{n+|r(\widetilde{\rho})|-|\widetilde{\rho}|} \notag \\
&= C(r(\widetilde{\rho}))_{n+|r(\widetilde{\rho})|-|\widetilde{\rho}|} \oplus D(r(\widetilde{\rho}))_{n+|r(\widetilde{\rho})|-|\widetilde{\rho}|+1} \notag \\
&= (Sd_r\, C)_n(\widetilde{\rho}) \oplus (Sd_r\, D)_{n+1}(\widetilde{\rho}) \notag \\
&= \C( Sd_r\, f: Sd_r\, C\to Sd_r\, D)_n(\widetilde{\rho}). \notag 
\end{align}
\noindent Next we check the differentials are the same in the various different cases. If $\widetilde{\rho}\leqslant \widetilde{\sigma}$ with $|\widetilde{\sigma}|-|r(\widetilde{\sigma})| = |\widetilde{\rho}|-|r(\widetilde{\rho})|$ then
\begin{align}
(d_{Sd_r \C (f)})_{\widetilde{\rho},\widetilde{\sigma},n} &= (d_{\C(f)})_{r(\widetilde{\rho}),r(\widetilde{\sigma}), n+|r(\widetilde{\sigma})|-|\widetilde{\sigma}|} \notag \\
&= \matrixcc{(d_C)_{r(\widetilde{\rho}),r(\widetilde{\sigma}), n+|r(\widetilde{\sigma})|-|\widetilde{\sigma}|}}{0}{f_{r(\widetilde{\rho}),r(\widetilde{\sigma}), n+|r(\widetilde{\sigma})|-|\widetilde{\sigma}|}}{-(d_D)_{r(\widetilde{\rho}),r(\widetilde{\sigma}), n+|r(\widetilde{\sigma})|-|\widetilde{\sigma}|+1}} \notag \\
&= \matrixcc{(d_{Sd_r\, C})_{\widetilde{\rho}, \widetilde{\sigma}, n}}{0}{(Sd_r\,f)_{\widetilde{\rho}, \widetilde{\sigma}, n}}{-(d_{Sd_r\,D})_{\widetilde{\rho}, \widetilde{\sigma}, n+1}} \notag \\
&= (d_{\C(Sd_r\,f)})_{\widetilde{\rho}, \widetilde{\sigma}, n}. \notag
\end{align}
If $\widetilde{\rho}*r(\widehat{\sigma}) = \widetilde{\sigma}$, $\widehat{\sigma}\in\widetilde{\rho}$ then
\begin{align}
(d_{Sd_r \C (f: C\to D)})_{\widetilde{\rho},\widetilde{\sigma},n} &= 
(-1)^n\id_{\C(f)(r(\widetilde{\sigma}))_{n+|r(\widetilde{\sigma})|-|\widetilde{\sigma}|}} \notag \\
&= \matrixcc{(-1)^n\id_{C(r(\widetilde{\sigma}))_{n+|r(\widetilde{\sigma})|-|\widetilde{\sigma}|}}}{0}{0}{(-1)^n\id_{D(r(\widetilde{\sigma}))_{n+|r(\widetilde{\sigma})|-|\widetilde{\sigma}|}}} \notag \\
&= (d_{\C(Sd_r\,f)})_{\widetilde{\rho}, \widetilde{\sigma}, n}. \notag
\end{align}
The case where $\widetilde{\rho}*\widehat{\sigma} = \widetilde{\sigma}$ and $r(\widehat{\sigma})\in\widetilde{\rho}$ proceeds similarly.
\end{proof}

Algebraic subdivision generalises the effect that geometric subdivision has on the simplicial chain and cochain.


\begin{prop}\label{alggeneralisestop}
Let $X^\prime$ be obtained from $X$ by a single simultaneous stellar subdivision and let $Sd_r:\BB(\A(X)) \to \BB(\A(X^\prime))$ be any choice of algebraic subdivision functor. Let $C=\Delta_*^{lf}(X)$ be the simplicial chain complex of $X$ with respect to a choice of orientations $[\sigma]_X$ for all $\sigma \in X$. Then \[Sd_r C = \Delta_*^{lf}(X^\prime),\] where $\Delta_*^{lf}(X^\prime)$ is the simplicial chain complex of $X^\prime$ with the following choice of simplex orientations
\begin{align}
[\tau]_{X^\prime} &= [\tau]_X,\quad\quad\quad\;\;\,  \widehat{\sigma}\notin \tau, \notag \\
[\widetilde{\rho}*\widehat{\sigma}]_{X^\prime} &= -[[\widetilde{\rho}]_X,\widehat{\sigma}], \quad\quad\, r(\widehat{\sigma})\in \widetilde{\rho},\; \widehat{\sigma}\notin\widetilde{\rho}, \notag \\
[\widetilde{\rho}*\widehat{\sigma}]_{X^\prime} &= [\widetilde{\rho}*r(\widehat{\sigma})]_X, \quad r(\widehat{\sigma}),\,\widehat{\sigma} \notin \widetilde{\rho}. \notag
\end{align}
The same is true of the simplicial cochain complex.
\end{prop}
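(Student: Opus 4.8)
The plan is to reduce the statement to a direct computation of the simplicial chain complex of a single stellar subdivision $X' = (\sigma, \widehat{\sigma})X$, since $Sd_r$ for an iterated (or simultaneous disjoint) stellar subdivision is by definition the composite of the single-stellar functors, and the simplicial chain complex behaves the same way: both sides glue together over the open stars of the subdivided simplices and agree with the identity elsewhere. So it suffices to treat $X' = (\sigma, \widehat{\sigma})X$ with $r(\widehat{\sigma})$ a chosen vertex of $\sigma$. The key point is that $\Delta_*^{lf}(X)$, viewed as an object of $\BB(\A^*(\F(\Z)))$, has $\Delta_*^{lf}(X)(\tau) = \Sigma^{|\tau|}\Z$ generated by the oriented simplex $[\tau]_X$, and the claim is just the assertion that formulas \eqref{SdCn}, \eqref{dSdconcise} reproduce exactly the rank-one free modules $\Sigma^{|\widetilde\sigma|}\Z$ (generated by $[\widetilde\sigma]_{X'}$) together with the usual simplicial boundary map, once we read off the signs through the stated orientation conventions.

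First I would check the underlying graded modules agree. By \eqref{SdCn}, $Sd_r(\Delta_*^{lf}(X))(\widetilde\sigma)_n = \Delta_*^{lf}(X)(r(\widetilde\sigma))_{n - |\widetilde\sigma| + |r(\widetilde\sigma)|} = (\Sigma^{|r(\widetilde\sigma)|}\Z)_{n - |\widetilde\sigma| + |r(\widetilde\sigma)|}$, which is $\Z$ concentrated in degree $n = |\widetilde\sigma|$, i.e. $\Sigma^{|\widetilde\sigma|}\Z = \Delta_*^{lf}(X')(\widetilde\sigma)$. Under this identification the chosen generator of the source is $[r(\widetilde\sigma)]_X$ and the chosen generator of the target is $[\widetilde\sigma]_{X'}$; the content of the proposition is that the differential \eqref{dSdconcise} matches the simplicial boundary $\partial[\widetilde\sigma]_{X'} = \sum_i (-1)^i [\widetilde\sigma \setminus v_i]_{X'}$ after these generator choices.

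Next I would match the differential case by case against \eqref{dSdconcise}. The three nonzero cases in \eqref{dSdconcise} correspond exactly to the three ways a codimension-one face $\widetilde\rho < \widetilde\sigma$ of a simplex of $X'$ can sit relative to the new vertex $\widehat{\sigma}$: (a) $\widetilde\rho$ and $\widetilde\sigma$ have the same "defect" $|\widetilde\sigma| - |r(\widetilde\sigma)|$, which happens precisely when $r$ restricted to these simplices is an isomorphism and the face is inherited from a face in $X$ not involving the stellar replacement — here the map is the old boundary entry $(d_C)_{r(\widetilde\rho), r(\widetilde\sigma)}$, i.e. an incidence number in $X$; (b) $\widetilde\rho * r(\widehat\sigma) = \widetilde\sigma$ with $\widehat\sigma \in \widetilde\rho$, the face obtained by deleting $r(\widehat\sigma)$; (c) $\widetilde\rho * \widehat\sigma = \widetilde\sigma$ with $r(\widehat\sigma) \in \widetilde\rho$, the face obtained by deleting $\widehat\sigma$. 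For (b) and (c) the entry is $\pm\id$, i.e. incidence number $\pm 1$, which is forced because deleting a vertex of a simplex always gives incidence $\pm 1$. The real work is verifying the \emph{signs}: I would fix the orientation conventions $[\tau]_{X'} = [\tau]_X$ for $\widehat\sigma \notin \tau$, $[\widetilde\rho * \widehat\sigma]_{X'} = -[[\widetilde\rho]_X, \widehat\sigma]$ when $r(\widehat\sigma) \in \widetilde\rho$, and $[\widetilde\rho * \widehat\sigma]_{X'} = [\widetilde\rho * r(\widehat\sigma)]_X$ otherwise, then compute $\partial[\widetilde\sigma]_{X'}$ with these conventions and compare termwise with the $(-1)^n$ and $(-1)^{n+1}$ factors in \eqref{dSdconcise}, recalling $n = |\widetilde\sigma|$ so these become $(-1)^{|\widetilde\sigma|}$ and $(-1)^{|\widetilde\sigma|+1}$.

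The main obstacle I anticipate is purely bookkeeping: tracking the sign through the identification of $\Sigma^{|r(\widetilde\sigma)|}\Z$ with $\Sigma^{|\widetilde\sigma|}\Z$ (there is a Koszul-type sign hidden in the shift, which is exactly why the orientation conventions in the statement carry the $-$ sign in the middle case) and making sure it is consistent across all the incidence relations among the various types of faces — in particular that the resulting map squares to zero, which it must since $Sd_r$ is a functor to chain complexes, but which also serves as a useful consistency check on the signs. Once the single-stellar case is done with signs pinned down, the simultaneous-disjoint and iterated cases follow formally by the gluing/composition structure of both $Sd_r$ and $\Delta_*^{lf}$, and the cochain statement is verified by the same computation with \eqref{SdCnother}, \eqref{dSdconciseother} and the dual orientation/coincidence-number conventions, the only change being the direction of the shift and the transpose of the incidence matrices.
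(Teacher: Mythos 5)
Your proposal is correct and follows essentially the same route as the paper: identify the $n$-chains via $(\Sigma^{|r(\widetilde\sigma)|}\Z)_{n-|\widetilde\sigma|+|r(\widetilde\sigma)|}=(\Sigma^{|\widetilde\sigma|}\Z)_n$, then match the differential in the three cases of \eqref{dSdconcise} against the simplicial boundary under the stated orientation conventions. The only tactical difference is in the case $\widetilde\tau=\widetilde\rho*r(\widehat\sigma)$ with $\widehat\sigma\in\widetilde\rho$: you plan to compute that sign directly and use $d^2=0$ only as a consistency check, whereas the paper inverts this, deducing the remaining incidence number by comparing the four-term expansions of $(d_{Sd_r\,C})^2=0$ and $(d_{\Delta^{lf}_*(X')})^2=0$ once the other entries are known to agree.
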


\begin{proof}
First we verify that the $n$-chains are the same.
\begin{align}
 Sd_r\, C(\widetilde{\rho})_* &= C(r(\widetilde{\rho}))_{* + |r(\widetilde{\rho})|-|\widetilde{\rho}|} \notag \\
 &= (\Sigma^{|\widetilde{\rho}|-|r(\widetilde{\rho})|}C(r(\widetilde{\rho})))_* \notag \\
 &= (\Sigma^{|\widetilde{\rho}|-|r(\widetilde{\rho})|}\Sigma^{|r(\widetilde{\rho})|}\Z)_* \notag \\
 &= (\Sigma^{|\widetilde{\rho}|}\Z)_* \notag \\
 &= \Delta^{lf}_*(X^\prime)(\widetilde{\rho}). \notag
\end{align}
Next we examine the boundary maps in various cases. Suppose first that $\widetilde{\rho}\leqslant \widetilde{\tau}$ with $|r(\widetilde{\rho})|-|\widetilde{\rho}| = |r(\widetilde{\tau})|-|\widetilde{\tau}|$. Then
\[ (d_{Sd_r\,C})_{\widetilde{\rho},\widetilde{\tau},n} = (d_C)_{r(\widetilde{\rho}),r(\widetilde{\tau}),n+|r(\widetilde{\tau})|-|\widetilde{\tau}|} = (d_C)_{\widetilde{\rho},\widetilde{\tau},n} = (d_{\Delta^{lf}_*(X^\prime)})_{\widetilde{\rho},\widetilde{\tau},n}\] since $X^\prime|_{\widetilde{\tau}} = X|_{\widetilde{\tau}}$.

Suppose $\widetilde{\tau} = \widetilde{\rho}*\widehat{\sigma}$ with $r(\widehat{\sigma})\in \widetilde{\rho}$. Then by the choice of orientations for simplices in $X^\prime$ \[(d_{\Delta^{lf}_*(X^\prime)})_{\widetilde{\rho},\widetilde{\tau},|\widetilde{\tau}|} = -(-1)^{|\widetilde{\rho}|-1} = (-1)^{|\widetilde{\tau}|+1} = (d_{Sd_r\,C})_{\widetilde{\rho},\widetilde{\tau},|\widetilde{\tau}|}.\]

Suppose $\widetilde{\tau} = \widetilde{\rho}*r(\widehat{\sigma})$ with $\widetilde{\rho}=\widehat{\sigma}*\tildetilde{\rho}$. Then
\begin{align}
0 &=(d_{Sd_r\,C})^2_{\tildetilde{\rho},\tildetilde{\rho}*\widehat{\sigma}*r(\widehat{\sigma})} \notag \\
&= (d_{Sd_r\,C})_{\tildetilde{\rho},\tildetilde{\rho}*\widehat{\sigma}}(d_{Sd_r\,C})_{\tildetilde{\rho}*\widehat{\sigma},\tildetilde{\rho}*\widehat{\sigma}*r(\widehat{\sigma})} \notag \\
&+ (d_{Sd_r\,C})_{\tildetilde{\rho},\tildetilde{\rho}*r(\widehat{\sigma})}(d_{Sd_r\,C})_{\tildetilde{\rho}*r(\widehat{\sigma}),\tildetilde{\rho}*\widehat{\sigma}*r(\widehat{\sigma})} \notag \\
&= (d_{\Delta^{lf}_*(X^\prime)})_{\tildetilde{\rho},\tildetilde{\rho}*\widehat{\sigma}}(d_{Sd_r\,C})_{\tildetilde{\rho}*\widehat{\sigma},\tildetilde{\rho}*\widehat{\sigma}*r(\widehat{\sigma})} \notag \\
&+ (d_{\Delta^{lf}_*(X^\prime)})_{\tildetilde{\rho},\tildetilde{\rho}*r(\widehat{\sigma})}(d_{\Delta^{lf}_*(X^\prime)})_{\tildetilde{\rho}*r(\widehat{\sigma}),\tildetilde{\rho}*\widehat{\sigma}*r(\widehat{\sigma})} \notag  
\end{align}
where the last equality is given by the previous verifications. We get a similar four term expansion for $0=(d_{\Delta^{lf}_*(X^\prime)})^2_{\tildetilde{\rho},\tildetilde{\rho}*\widehat{\sigma}*r(\widehat{\sigma})}$ which when compared to the above implies that 
\[(d_{\Delta^{lf}_*(X^\prime)})_{\widetilde{\rho},\widetilde{\tau},|\widetilde{\tau}|} = (d_{Sd_r\,C})_{\widetilde{\rho},\widetilde{\tau},|\widetilde{\tau}|}.\]
The case of the simplicial cochain complex is analogous.
\end{proof}

\section{Squeezing}\label{section:squeezing}
Since $\comesh(X)>0$ if we take $i$ large enough we may construct the retracting map to retract an $\ep$-neighbourhood of each simplex back onto that simplex for $\ep<\comesh(X)$. This procedure is carefully detailed in Construction \ref{constructr} and the result is proved in Proposition \ref{technicalprop}.

\begin{constr}\label{constructr}
Let $X$ be a fixed f.d. l.f. simplicial complex. We construct an $\N$-parameter family of simplicial approximations to the identity \[ \{r_j: Sd^{j+1}\, X \to Sd^j\,X \}_{j\in \N}\] together with corresponding canonical homotopies \[\{P_j:\id_X\simeq r_j \}_{j\in\N}\] as follows.

Let $r_0:Sd\, X\to X$ be defined by any choices of $r_0(\widehat{\tau})\in \tau$ for all $\tau\in X$.

For $j\geqslant 1$ consider defining $r_j:Sd^{j+1}\, X \to Sd^j\, X$. For all $\tau \in Sd^j\, X$ we must select a vertex $r_j(\widehat{\tau})\in \tau$ to map $\widehat{\tau}$ to. For all $\tau \in Sd^j\, X$ there is a unique simplex $\rho\in Sd\,X$ with $\mathring{\tau}\subset\mathring{\rho}$. Each $\rho\in Sd\, X$ can be written uniquely as $\widehat{\sigma}_0\ldots \widehat{\sigma}_n$ for $\sigma_0<\ldots < \sigma_n \subset X$ and $n=|\rho|$. Note that necessarily $\mathring{\tau}\subset \mathring{\sigma}_n$. If $n=0$, then $\widehat{\tau}=\tau = \rho$ is a vertex so we have no choice but to define $r_j(\widehat{\tau}) = \widehat{\tau}$. Otherwise define $r_j(\widehat{\tau})$ to be any vertex $v$ of $\tau$ which minimises the distance $d_X(v,\widehat{\sigma}_0,\ldots, \widehat{\sigma}_{n-1})$.

Since $X$ is given the standard metric, if $\sigma^\prime<\sigma$ is a codimension $1$ face, then \[d_X(x,\sigma^\prime) = d_X(x,\partial\sigma),\quad \forall x\in \sigma^\prime*\widehat{\sigma}.\] Thus $r_j$ is chosen to minimise the distance to $\partial\sigma$ for $j\geqslant 1$. Every simplex $\widetilde{\tau}\in Sd^{j+1}\,\sigma$ is contained in $\widehat{\sigma}* Sd^j\, \sigma^\prime$ for some codimension $1$ face $\sigma^\prime<\sigma$. All vertices of $\widetilde{\tau}$ are mapped by $r_j$ closer to $\sigma^\prime$ (and hence the boundary). Thus, by convexity of $\widetilde{\tau}$, all points of $\widetilde{\tau}$ are mapped closer to the boundary by $r_j$ for $j\geqslant 1$. Whence 
\begin{equation}\label{rjtoboundary}
d_X(r_j(x),\partial\sigma) \leqslant d_X(x,\partial\sigma),\quad \forall x\in Sd^{j+1}\,\sigma,\; \forall j\geqslant 1.
\end{equation}
Note also that for $j=0$ the following holds trivially as both sides are zero
\begin{equation}\label{rjforzero}
d_X(r_0(x),\partial\sigma) \leqslant d_X(x,\partial\sigma),\quad \forall x\in Sd\,(\partial\sigma).
\end{equation}
Define $P_j:\id_X\simeq r_j$ to be the straight line homotopy for all $j\in\N$. By equation $(\ref{rjtoboundary})$, if $j\geqslant 1$ then  
\begin{equation}\label{Pjtoboundary}
d_X(P_j(x,t),\partial\sigma) \leqslant d_X(P_j(x,s),\partial\sigma),\quad \forall 0\leqslant s \leqslant t \leqslant 1,\;\forall x\in Sd^j\,\sigma.
\end{equation}
If $j=0$ this condition trivially holds for all $x\in Sd\,(\partial\sigma)$ but need not hold elsewhere.
\end{constr}

\begin{defn}
For all $i\leqslant j$, let $r_{i,i+1,\ldots,j}$ denote the composition \[r_i\circ r_{i+1}\circ \ldots \circ r_j\] and let $P_{i,i+1,\ldots,j}$ denote the concatenation of canonical straight line homotopies \[P_i(r_{i+1,\ldots,j})*\ldots * P_{j-1}(r_j)*P_j\]
\end{defn}

\begin{rmk}\label{Rmk:retractingrPareretracting}
Note that by construction \ref{constructr} $r_{0,\ldots,j-1}$ and $P_{0,\ldots,j-1}$ have the following properties
\begin{align}
 r_{0,\ldots,j-1}(Sd^j(\sigma) \backslash \mathring{D}(\widehat{\sigma},Sd^{j-1}\,\sigma)) &\subset \partial \sigma, \label{eleven} \\
 d_X(P_{0,\ldots, j-1}(x,t),\partial\sigma) &\leqslant d_X(P_{0,\ldots, j-1}(x,s),\partial\sigma), \label{twelve}
\end{align}
for all $\sigma\in X$, $0\leqslant s\leqslant t \leqslant 1$ and for all $x \in Sd^j\,\sigma \backslash \mathring{D}(\widehat{\sigma},Sd^{j-1}\,\sigma))$.
\end{rmk}

\begin{prop}\label{technicalprop}
Let $X$ be an f.d. l.f. simplicial complex, then for all $\ep<\comesh(X)$ there is an integer $i(X,\ep)$ such that for all integers $i\geqslant i(X,\ep)$, all $\sigma\in X$ and all $0\leqslant\ep^\prime \leqslant \ep$: 
\begin{eqnarray*}
 r_{0,\ldots,i-1}(N_\ep(Sd^i\, \sigma))&\subset& \sigma, \\ 
 P_{0,\ldots,i-1}(N_{\ep^\prime}(Sd^i\, \sigma),[0,1])&\subset& N_{\ep^\prime}(Sd^i\, \sigma).
\end{eqnarray*}
\end{prop}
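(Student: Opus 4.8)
The plan is to control, for each simplex $\sigma$, how far $r_{0,\ldots,i-1}$ can move a point lying within $\ep$ of $Sd^i\,\sigma$, and to show that for $i$ large this displacement is small enough to stay inside $\sigma$ itself. The key geometric input is Remark~\ref{Rmk:retractingrPareretracting}: the composite retractions $r_{0,\ldots,j-1}$ push everything outside the open dual cell $\mathring{D}(\widehat{\sigma},Sd^{j-1}\,\sigma)$ onto $\partial\sigma$, and the homotopies $P_{0,\ldots,j-1}$ never increase the distance to $\partial\sigma$ along the way. The subtlety is that a point in $N_\ep(Sd^i\,\sigma)$ need not lie in $|\sigma|$ at all; it may lie in some neighbouring simplex $\tau$ with $\sigma$ a face of $\tau$, or even in a simplex only sharing lower-dimensional faces with $\sigma$. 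So the first step is to reduce to the case where the point lies in the carrier of a simplex $\tau \geqslant \sigma$, using local finiteness and the triangle inequality to bound how the relevant simplices can be arranged.

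First I would fix $\ep < \comesh(X)$ and choose $i(X,\ep)$ so that $\mesh(Sd^i\,X) < \comesh(X) - \ep$ (possible since $\mesh(Sd^i\,X) \to 0$); one may also need $\mesh(Sd^i\,X)$ small relative to $\ep$ for the second inclusion. Given $x \in N_\ep(Sd^i\,\sigma)$, let $\tau$ be the simplex of $X$ (not $Sd^i\,X$) whose interior contains $x$. Since $x$ is within $\ep < \comesh(X)$ of a point of $\sigma$ and every proper face of $\tau$ is at distance $\geqslant \comesh(X) \geqslant \rad(\tau)$ from $\widehat\tau$, a short argument with the comesh hypothesis forces $\sigma$ and $\tau$ to be joinable or comparable — in fact one shows $\sigma \leqslant \tau$ after intersecting the $\ep$-ball with the relevant faces. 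Then $x$ decomposes in the join $\sigma \ast \link(\sigma,\tau)$, and I would track its two ``coordinates'' separately.

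The second step applies the retracting property. Working inside the carrier simplex $\tau$ and using that $r_{i}$ for $i\geqslant 1$ was built to minimise distance to $\partial\sigma$ (Construction~\ref{constructr}, equation~(\ref{rjtoboundary})), the iterated map $r_{0,\ldots,i-1}$ sends the part of $N_\ep(Sd^i\,\sigma)$ lying outside $\mathring{D}(\widehat\sigma,Sd^{i-1}\,\sigma)$ into $\partial\sigma \subset \sigma$, by (\ref{eleven}); the part lying inside that open dual cell is already in $\sigma$ up to a $\mesh(Sd^i\,X)$ error, and the choice of $i$ makes this land in $\sigma$. For the homotopy statement, I would use (\ref{twelve}): since $d_X(P_{0,\ldots,i-1}(x,t),\partial\sigma)$ is nonincreasing in $t$ and $x \in N_{\ep'}(Sd^i\,\sigma)$ means $d_X(x,\sigma)<\ep'$ hence $x$ lies in a thin collar, the whole track of the homotopy stays within distance $\ep'$ of $Sd^i\,\sigma$ — here one checks that moving toward $r_{0,\ldots,i-1}(x)$, which lies in or near $\sigma$, keeps the ambient distance to $Sd^i\,\sigma$ bounded by $\ep'$ because the straight-line path in a simplex is the geodesic and both endpoints are within $\ep'$.

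The main obstacle I expect is the reduction step: rigorously pinning down which simplices $\tau$ of $X$ can meet $N_\ep(Sd^i\,\sigma)$ and showing that the relevant ones satisfy $\sigma \leqslant \tau$ (or reduce to that case on faces), so that the join-coordinate bookkeeping of Construction~\ref{constructr} actually applies. This is where the hypothesis $\ep < \comesh(X)$ is essential and where the ``tame triangulation'' condition does the real work; everything after that is a packaging of the retracting inequalities (\ref{rjtoboundary})--(\ref{twelve}) and their iterates (\ref{eleven})--(\ref{twelve}).
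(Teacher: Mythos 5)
Your proposal follows essentially the same route as the paper: choose $i(X,\ep)$ minimal with $\mesh(Sd^{i}\,X)<\comesh(X)-\ep$, observe that this forces $N_\ep(\partial\tau)\subset Sd^i\,\tau\setminus D(\widehat{\tau},Sd^{i-1}\,\tau)$ for every $\tau\in X$, and then invoke the retracting properties (\ref{eleven}) and (\ref{twelve}) of Remark \ref{Rmk:retractingrPareretracting}. The paper's own proof is in fact terser than your write-up and leaves the carrier-simplex bookkeeping you worry about implicit, so your extra care on that reduction is a filling-in of detail rather than a different method.
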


\begin{proof}
Take any $\ep< \comesh(X)$. Let $i(X,\ep)$ be the smallest integer such that $\mesh(Sd^{i(X,\ep)}\,X) < \comesh(X) - \ep$. Note we can find such an integer since dim$(X)<\infty$ and \[\mesh(Sd^j\, X) \leqslant \left(\frac{\mathrm{dim}(X)}{\mathrm{dim}(X)+1}\right)^j\mesh(X).\] Hence for all $i\geqslant i(X,\ep)$, $\mesh(Sd^i\,X)< \comesh(X)-\ep$ so in particular 
\begin{equation}
D(\widehat{\sigma},Sd^{i-1}\,X) \subset Sd^i\,\sigma \backslash N_\ep(\partial\sigma) \label{inmiddle} 
\end{equation}
for all $\sigma\in X$.

The result now follows from $(\ref{inmiddle})$, $(\ref{eleven})$ and $(\ref{twelve})$ as for all $\tau \in X$, \[ N_\ep(\partial \tau) \subset Sd^i\,\tau\backslash D(\widehat{\tau},Sd^{i-1}\,\tau).\]
\end{proof}

\begin{cor}\label{Cor:lotsofeqns}
Let $\ep$, $i$ and $r=r_{0,\ldots, i-1}$ be given as in Proposition \ref{technicalprop}. Then the induced chain equivalence on locally finite simplicial chains 
\begin{displaymath}
\xymatrix@C=2cm{
(\Delta^{lf}_*(X),d_{\Delta^{lf}_*(X)},0) \ar@<0.5ex>[r]^-{s_*=\id_*} & (\Delta^{lf}_*(Sd^i\,X),d_{\Delta^{lf}_*(Sd^i\,X)},P_*)\ar@<0.5ex>[l]^-{r_*}
}
\end{displaymath}
satisfies
\begin{align}
 s_*(\Delta_*(X)(\sigma)) &\subset \Delta_*(Sd^i\, X)[Sd^i\,\sigma], \label{eye} \\
 r_*(\Delta_*(Sd^i\,X)[N_{\ep^\prime}(Sd^i\,\partial\sigma)])&\subset \Delta_*(X)[\partial\sigma], \label{eyeeye} \\
 P_*(\Delta_*(Sd^i\,X)[N_{\ep^\prime}(Sd^i\,\sigma)])&\subset \Delta_{*+1}(Sd^i\,X)[N_{\ep^\prime}(Sd^i\,\sigma)], \label{eyeeyeeye}
\end{align}
for all $\sigma\in X$ and all $0\leqslant \ep^\prime \leqslant\ep$.

The dual chain equivalence on simplicial cochains
\begin{displaymath}
\xymatrix@C=2cm{
(\Delta^{-*}(X),\delta^{\Delta^{-*}(X)},0) \ar@<0.5ex>[r]^-{r^*} & (\Delta^{-*}(Sd^i\,X),\delta^{\Delta^{-*}(Sd^i\,X)},P^*)\ar@<0.5ex>[l]^-{s^*}
}
\end{displaymath}
satisfies the dual conditions
\begin{align}
 s^*(\Delta^{-*}(Sd^i\,X)[Sd^i\,\mathring{\sigma}]) &\subset \Delta^{-*}(X)[\bigcup_{\tau\geqslant\sigma}\mathring{\tau}], \label{eyevee} \\
 r^*(\Delta^{-*}(X)(\sigma))&\subset \Delta^{-*}(X)[\bigcup_{\tau\geqslant\sigma}(Sd^i\,\tau\backslash N_\ep(\partial\tau\backslash\sigma))], \label{vee} \\
 P^*(\Delta^{-*}(Sd^i\,X)[Sd^i\,\rho\backslash N_{\ep^\prime}(\partial\rho\backslash\sigma)])&\subset \Delta^{-*+1}(Sd^i\,X)[Sd^i\,\rho\backslash N_{\ep^\prime/2}(\partial\rho\backslash\sigma)], \label{veeeye}
\end{align}
for all $\sigma\in X$, $0\leqslant\ep^\prime\leqslant\ep$ and $\rho\geqslant\sigma$.
\end{cor}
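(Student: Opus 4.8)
The plan is to derive everything from Proposition \ref{technicalprop} by feeding its set-level containments through the explicit maps $s_*$, $r_*$ and $P_*$ of the chain equivalence $\Delta^{lf}_*(X) \simeq \Delta^{lf}_*(Sd^i X)$, and then to dualise. The three chain-level statements $(\ref{eye})$, $(\ref{eyeeye})$, $(\ref{eyeeyeeye})$ are really statements about \emph{supports} of simplicial chains: a generator of $\Delta_*(Sd^i X)$ supported on a simplex $\widetilde{\tau}$ is sent by $r_*$ (respectively $P_*$) to a sum of generators supported on simplices of the form $r_{0,\ldots,i-1}(\widetilde{\tau})$ (respectively on simplices met by the homotopy track $P_{0,\ldots,i-1}(\widetilde{\tau}\times[0,1])$). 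So the first step is to record precisely, from Construction \ref{constructr} and the definition of the induced maps on locally finite simplicial chains, that $\mathrm{Supp}(r_*(x)) \subseteq r_{0,\ldots,i-1}(\mathrm{Supp}(x))$ and $\mathrm{Supp}(P_*(x)) \subseteq P_{0,\ldots,i-1}(\mathrm{Supp}(x)\times[0,1])$ for any chain $x$, together with the obvious $\mathrm{Supp}(s_*(x)) = \mathrm{Supp}(\mathrm{id}_*(x)) = \mathrm{Supp}(x)$ since $s_*$ is the identity on the underlying polyhedron.

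Granting that, $(\ref{eye})$ is immediate: $\Delta_*(X)(\sigma)$ is supported on $\sigma$, and $s_* = \mathrm{id}_*$ leaves the support inside $\sigma \subseteq |Sd^i\sigma|$, i.e. inside the span of $Sd^i\sigma$, which is exactly $\Delta_*(Sd^iX)[Sd^i\sigma]$. For $(\ref{eyeeye})$, a chain in $\Delta_*(Sd^iX)[N_{\ep'}(Sd^i\partial\sigma)]$ is supported on simplices of $Sd^iX$ lying in $N_{\ep'}(\partial\sigma)$; since $\ep' \leqslant \ep$ and $\partial\sigma = \bigcup_{\sigma' < \sigma}\sigma'$, Proposition \ref{technicalprop} applied to each proper face $\sigma'$ gives $r_{0,\ldots,i-1}(N_{\ep'}(Sd^i\sigma')) \subseteq \sigma'$, hence the image lands in $\bigcup_{\sigma'<\sigma}\sigma' = \partial\sigma$, giving a chain in $\Delta_*(X)[\partial\sigma]$. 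For $(\ref{eyeeyeeye})$, apply the second containment of Proposition \ref{technicalprop}: $P_{0,\ldots,i-1}(N_{\ep'}(Sd^i\sigma)\times[0,1]) \subseteq N_{\ep'}(Sd^i\sigma)$, so the homotopy does not move any supporting simplex out of $N_{\ep'}(Sd^i\sigma)$, whence $\mathrm{Supp}(P_*(x)) \subseteq N_{\ep'}(Sd^i\sigma)$ and $P_*(x) \in \Delta_{*+1}(Sd^iX)[N_{\ep'}(Sd^i\sigma)]$. One must be slightly careful that $N_{\ep'}(Sd^i\sigma)$ and $N_{\ep'}(Sd^i\partial\sigma)$ mean the set of open simplices of $Sd^iX$ whose union is contained in (or meets, under whichever convention the paper uses) the metric neighbourhood — I would fix that convention once and use it uniformly.

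The dual statements $(\ref{eyevee})$--$(\ref{veeeye})$ then follow formally by transposing. Recall $\Delta^{-*}(X) = \mathrm{Hom}(\Delta_*(X),\Z)$ with the dual category conventions, so $r^* = (r_*)^{\mathrm{op}}$, $s^* = (s_*)^{\mathrm{op}}$, $P^* = (P_*)^{\mathrm{op}}$; a containment $r_*(\Delta_*[A]) \subseteq \Delta_*[B]$ between assembled pieces transposes to $r^*(\Delta^{-*}[\text{complement of } B]) \subseteq \Delta^{-*}[\text{complement of }A]$ in the appropriate sense, and the condition ``supported away from $N_{\ep'}(\partial\rho\backslash\sigma)$'' in $(\ref{vee})$, $(\ref{veeeye})$ is exactly the dual of ``supported on $N_{\ep'}(\text{faces not containing }\sigma)$''. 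The factor-of-two shrinkage $N_{\ep'} \mapsto N_{\ep'/2}$ in $(\ref{veeeye})$ comes from the fact that the straight-line homotopy $P_{0,\ldots,i-1}$ moves points by at most (a controlled multiple of) $\mesh(Sd^iX)$, which by the choice of $i \geqslant i(X,\ep)$ is $< \comesh(X) - \ep \leqslant \frac{\ep'}{2}$ once $\ep$ is taken small enough relative to the gap — so after translating ``does not leave $N_{\ep'}$'' to the dual ``stays outside $N_{\ep'/2}(\partial\rho\backslash\sigma)$'' one has room to spare; alternatively one strengthens Proposition \ref{technicalprop} to give $P_{0,\ldots,i-1}(N_{\ep'/2}(Sd^i\sigma)\times[0,1]) \subseteq N_{\ep'}(Sd^i\sigma)$, which is the genuinely useful form here.

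The main obstacle I anticipate is bookkeeping rather than mathematics: making the passage from the \emph{metric} statements of Proposition \ref{technicalprop} (about points of $|X|$) to \emph{combinatorial} statements about which open simplices of $Sd^iX$ appear in the support of a chain, consistently on both the chain side and — after dualising — the cochain side, where ``support'' and ``assembled over a subset'' swap roles with ``cosupport'' and ``complement''. In particular one has to check that $r_{0,\ldots,i-1}$, being simplicial, carries an open simplex into a single open simplex (possibly of lower dimension), so that ``$\mathrm{Supp}(r_*x)$'' is literally a union of open simplices and the containments above are honest statements in the poset of simplices; and one has to verify that the dual homotopy $P^*$ really is the transpose of $P_*$ with the sign/degree conventions of \cite{bluebk}, so that $(\ref{veeeye})$ has the asserted form. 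None of this is deep, but it is where an error would hide, so I would write it out carefully for one representative case of each of the six displayed inclusions and assert the rest by symmetry.
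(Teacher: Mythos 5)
Your overall strategy --- reading off the three chain-level inclusions as support statements from Proposition \ref{technicalprop} and obtaining the cochain-level ones by transposing the chain-level containments --- is exactly the paper's, and your treatment of $(\ref{eye})$--$(\ref{vee})$ is sound.

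The gap is in your justification of $(\ref{veeeye})$, the one inclusion that needs a genuine argument. Your primary mechanism --- that $P_{0,\ldots,i-1}$ moves points by at most a controlled multiple of $\mesh(Sd^i\,X)$, which is ``$<\comesh(X)-\ep\leqslant \ep^\prime/2$ once $\ep$ is small enough'' --- fails on both counts. The homotopy $P_{0,\ldots,i-1}$ is the track of the retraction $r_{0,\ldots,i-1}$, which collapses the whole $\ep$-neighbourhood of $Sd^i\,\sigma$ onto $\sigma$ and pushes everything outside the central dual cell into $\partial\sigma$; its tracks have macroscopic length, not length of order $\mesh(Sd^i\,X)$. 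Moreover the inequality $\comesh(X)-\ep\leqslant\ep^\prime/2$ cannot be arranged: the statement is required for all $0\leqslant\ep^\prime\leqslant\ep$, including $\ep^\prime$ arbitrarily small, while $\ep<\comesh(X)$ is fixed. The mechanism that actually produces the factor $2$ is the monotonicity built into Construction \ref{constructr} (equation $(\ref{twelve})$, equivalently the second containment of Proposition \ref{technicalprop}): the track of any point lying within $\ep^\prime/2$ of a face $\tau<\rho$ with $\sigma\not\leqslant\tau$ stays within $\ep^\prime/2$ of that face for all time, hence stays inside $N_{\ep^\prime/2}(\partial\rho\backslash\sigma)$ and never meets $Y:=Sd^i\,\rho\backslash N_{\ep^\prime}(\partial\rho\backslash\sigma)$. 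Transposing, the support of $P^*$ applied to a cochain supported on $Y$ avoids $N_{\ep^\prime/2}(\partial\rho\backslash\sigma)$, which is $(\ref{veeeye})$. Your parenthetical ``alternative'' --- strengthening Proposition \ref{technicalprop} to $P(N_{\ep^\prime/2}\times[0,1])\subset N_{\ep^\prime}$ --- is pointing at the right idea, but no strengthening is needed: the second containment of Proposition \ref{technicalprop}, applied with parameter $\ep^\prime/2$ to each closed face $\tau$ making up $\partial\rho\backslash\sigma$, already gives it. Replace the mesh-based reasoning with this and the proof is complete.
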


\begin{proof}
Statements $(\ref{eye})-(\ref{eyevee})$ follow directly from Proposition \ref{technicalprop}. For $(\ref{vee})$ note that $r_*$ sends $N_{\ep^\prime}(\partial\sigma)$ to $\partial\sigma$ and the support of $r^*(\mathring{\sigma})$ is all simplices in $Sd^i\,X$ sent by $r$ to $\mathring{\sigma}$, i.e. excluding the boundary of $\sigma$. For $(\ref{veeeye})$ note that statement $(\ref{eyeeyeeye})$ is saying that $P_*$ maps simplices of $Sd^i\,X$ \textit{towards the boundary} of the simplex in $X$ that they are contained in. Consider $P^*$ applied to the region $Y:=Sd^i\,\rho\backslash N_{\ep^\prime}(\partial\rho\backslash\sigma)$. The support of $P^*(Y)$ is the set of points $x$ whose paths $P(x,I)$ intersect $Y$. All points of $\rho\backslash Y$ that are nearer to $\partial\rho\backslash\sigma$ than they are to $Y$ must have paths disjoint from $Y$. This is certainly true for $Sd^i\,\rho\backslash N_{\ep^\prime/2}(\partial\rho\backslash\sigma)$ so the result follows.
\end{proof}

\begin{cor}\label{rscd}
Let $C\in\BB(\A^*(X))$ and let $r=r_{0,\ldots,i-1}$. Then the chain equivalence of Proposition \ref{chainequivforcompositions} 
\begin{displaymath}
\xymatrix@C=2cm{
(C,d_C,0) \ar@<0.5ex>[r]^-{s_C:=s_*} & (Sd_r\,C,d_{Sd_r\,C},P_C:=P_*)\ar@<0.5ex>[l]^-{r_C:=r_*}
}
\end{displaymath}
obtained by composing chain equivalences from Theorem \ref{Thm:subdivassemblechainequiv} satisfies
\begin{align}
 s_C(C(\sigma))&\subset (Sd_r\,C)[Sd^i\,\sigma], \label{seven} \\
 r_C((Sd_r\,C)[N_{\ep^\prime}(Sd^i\,\sigma)])&\subset C[\sigma], \label{eight} \\
 P_C((Sd_r\,C)_*[N_{\ep^\prime}(Sd^i\,\sigma)])&\subset (Sd_r\,C)_{*+1}[N_{\ep^\prime}(Sd^i\,\sigma)], \label{nine}
\end{align}
for all $\sigma\in X$ and $0\leqslant\ep^\prime\leqslant\ep$.

Let $C\in\BB(\A_*(X))$. Then the corresponding chain equivalence 
\begin{displaymath}
\xymatrix@C=2cm{
(C,d_C,0) \ar@<0.5ex>[r]^-{r^C:=r^*} & (Sd_r\,C,d_{Sd_r\,C},P^C:=P^*)\ar@<0.5ex>[l]^-{s^C:=s^*}
}
\end{displaymath}
satisfies 
\begin{align}
 s^C((Sd^i_r\,C)[Sd^i\,\mathring{\sigma}])&\subset C[\bigcup_{\tau\geqslant\sigma}\mathring{\tau}], \label{sCeqn} \\
 r^C(C(\sigma))&\subset (Sd^i_r\,C)[\bigcup_{\tau\geqslant\sigma}Sd^i\,\tau\backslash N_\ep(\partial\tau\backslash\sigma)], \label{rCeqn} \\
 P^C((Sd^i_r\,C)_*[Sd^i\,\rho\backslash N_{\ep^\prime}(\partial\rho\backslash\sigma)])&\subset (Sd^i_r\,C)_{*+1}[Sd^i\,\rho\backslash N_{\ep^\prime/2}(\partial\rho\backslash\sigma)], \label{PCeqn}
\end{align}
for all $\sigma\in X$, $\rho\geqslant\sigma$ and $0\leqslant\ep^\prime\leqslant\ep$.
\end{cor}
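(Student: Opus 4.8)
The plan is to obtain all six inclusions by ``tensoring up'' Corollary \ref{Cor:lotsofeqns}. Recall from the definition of $Sd_r$ and the remark expressing $Sd_r\,C$ as a componentwise tensor product that, over each $\tau\in X$, $Sd_r\,C$ restricted to $r^{-1}(\mathring{\tau})$ is $C(\tau)\otimes\Sigma^{-|\tau|}\Delta^{lf}_*(\overline{r^{-1}(\mathring{\tau})},\mathrm{Fr}\,r^{-1}(\mathring{\tau}))$ in the $\A^*$ case and $C(\tau)\otimes\Sigma^{|\tau|}\Delta^{-*}(\overline{r^{-1}(\mathring{\tau})},\mathrm{Fr}\,r^{-1}(\mathring{\tau}))$ in the $\A_*$ case; and recall from the proof of Theorem \ref{Thm:subdivassemblechainequiv} (composed as in Proposition \ref{chainequivforcompositions}) that on the diagonal the maps $s_C,r_C,P_C$ (resp.\ $r^C,s^C,P^C$) are exactly $\mathrm{id}_{C(\tau)}$ tensored with the corresponding diagonal pieces of the simplicial chain equivalence of Corollary \ref{Cor:lotsofeqns}. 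So, reading supports in $Sd^i\,X$, the support of each of our maps over a union of simplices of $X$ is governed by the support of the corresponding simplicial map $s_*,r_*,P_*$ (resp.\ $r^*,s^*,P^*$). Transcribing gives $(\ref{eye})\Rightarrow(\ref{seven})$, $(\ref{eyeeye})\Rightarrow(\ref{eight})$, $(\ref{eyeeyeeye})\Rightarrow(\ref{nine})$, and dually $(\ref{eyevee})\Rightarrow(\ref{sCeqn})$, $(\ref{vee})\Rightarrow(\ref{rCeqn})$, $(\ref{veeeye})\Rightarrow(\ref{PCeqn})$. For instance $(\ref{eye})$ says $s_*$ carries the fundamental chain of $(\sigma,\partial\sigma)$ into $\Delta_*(Sd^i\,X)[Sd^i\,\sigma]$; tensoring with $C(\sigma)$ gives $s_C(C(\sigma))\subset(Sd_r\,C)[Sd^i\,\sigma]$, which is $(\ref{seven})$, and the other five are the same bookkeeping.

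Thus no new geometry is needed: the simplicial statements $(\ref{eye})$--$(\ref{veeeye})$ already package everything about the retracting behaviour of $r=r_{0,\ldots,i-1}$ and the straight-line homotopy $P=P_{0,\ldots,i-1}$ coming from Proposition \ref{technicalprop}. The only real work is to make the tensor identification precise enough that these supports transport through it — i.e.\ to see that the construction of Theorem \ref{Thm:subdivassemblechainequiv} is ``$C$-linear in the $Sd^i\,X$-direction''.

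The one point that genuinely needs care — and the main, if routine, obstacle — is that our maps are \emph{not} literally $\mathrm{id}_C$ tensored with the simplicial data: the off-diagonal components of $s_*$ in the proof of Theorem \ref{Thm:subdivassemblechainequiv}, the ``new'' $\pm\mathrm{id}$ entries of $d_{Sd_r\,C}$ in $(\ref{dSdconcise})$ (resp.\ $(\ref{dSdconciseother})$), and the homotopy obtained via Proposition \ref{chcont} all carry extra terms built from $d_C$. One must check that every such term is \emph{local over $X$}: starting from a generator sitting over a simplex $\tau\in X$ it produces generators sitting over faces of $\tau$ (equivalently, over simplices of $Sd^i\,X$ contained in $\tau$, resp.\ in the appropriate star and neighbourhood region) and hence cannot push the support outside the regions named in $(\ref{seven})$--$(\ref{PCeqn})$, each of which is a full union of simplices of $Sd^i\,X$ over a prescribed subset of $X$. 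I would verify this by induction on the number of simultaneous disjoint stellar subdivisions composing $X\to Sd^i\,X$ (each barycentric step being such a composition, Example \ref{prismexample}): for a single stellar subdivision $(\sigma,\widehat{\sigma})X$ one reads off directly from $(\ref{SdCn})$--$(\ref{Sdfconcise})$ (resp.\ $(\ref{SdCnother})$--$(\ref{Sdfconciseother})$) and from the explicit formula for $s_*$ that every nonzero component of $s_C$, $d_{Sd_r\,C}$, $r_C$ and $P_C$ stays inside the simplex of $X$ it began in; outside $\bigcup_i\mathrm{st}(\sigma_i)$ the chain equivalence is the identity by Definition \ref{disjointstellars} and Proposition \ref{chainequivforcompositions}, hence local; and locality is preserved under composition and under gluing by the identity. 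With locality in hand the six inclusions follow from $(\ref{eye})$--$(\ref{veeeye})$ as above, the $\A_*$ case being word-for-word the same with locally finite chains replaced by cochains and the roles of $r$ and $s$ interchanged.
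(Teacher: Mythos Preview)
Your approach is correct and matches the paper's: the paper gives no explicit proof of this corollary, treating it as an immediate consequence of Corollary~\ref{Cor:lotsofeqns} via the tensor-product description of $Sd_r\,C$. Your discussion of the off-diagonal $d_C$-terms and their locality over $X$ is a reasonable (and helpful) unpacking of what the paper leaves implicit.
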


\begin{thm}[Squeezing Theorem]\label{triangulise}
Let $X$ be a finite-dimensional locally finite simplicial complex. There exists an $\ep=\ep(X)>0$ and an integer $i(X,\ep)$ such that for all $i\geqslant i(X,\ep)$ setting $r=r_{0,\ldots, i-1}$ there exists a chain equivalence $Sd_r\,C \to Sd_r\, D$ in $\mathbb{G}_{Sd^i\, X}(\A)$ with control at most $\ep$ measured in $X$ for $C,D\in \A(X)$, then there exists a chain equivalence $\xymatrix{ f: C \ar[r]^-{\sim} & D}$ in $\A(X)$ without subdividing.
\end{thm}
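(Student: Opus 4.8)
The plan is to assemble the hypothesised bounded chain equivalence with the comparison equivalences of Corollary~\ref{rscd}, using the support estimates recorded there to force the assembled map to be triangular. Set $\ep(X):=\comesh(X)/4>0$ and let $i(X,\ep)$ be the integer furnished by Proposition~\ref{technicalprop} for the parameter $3\ep(X)<\comesh(X)$, so that for every $i\geqslant i(X,\ep)$, writing $r=r_{0,\ldots,i-1}$, the chain equivalences $(s_C,r_C,P_C)$ of Corollary~\ref{rscd} for $\A^*(X)$ and $(r^C,s^C,P^C)$ for $\A_*(X)$, applied both to $C$ and to $D$, together with their support estimates $(\ref{seven})$--$(\ref{nine})$ resp.\ $(\ref{sCeqn})$--$(\ref{PCeqn})$, are available with $\ep^\prime$ ranging over $[0,3\ep(X)]$. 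Recall from Theorem~\ref{Thm:subdivassemblechainequiv} that $r_Cs_C=\id_C$ and $r_Ds_D=\id_D$, while $s_Cr_C\simeq\id$ via $P_C$ and $s_Dr_D\simeq\id$ via $P_D$ (and dually $s^Cr^C=\id_C$, $r^Cs^C\simeq\id$ via $P^C$). By Definition~\ref{bdf2} the hypothesis supplies a chain map $g\colon Sd_r\,C\to Sd_r\,D$ in $\GG_{Sd^i\,X}(\A)$, a chain inverse $g^{-1}$, and chain homotopies $Q_C\colon\id\simeq g^{-1}g$, $Q_D\colon\id\simeq gg^{-1}$, all of bound at most $\ep(X)$ measured in $X$.

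Treating the $\A^*(X)$ case first, I would set $f:=r_D\circ g\circ s_C\colon C\to D$ and $h:=r_C\circ g^{-1}\circ s_D\colon D\to C$. These are chain maps, and they are morphisms of $\A^*(X)$: chasing a summand $C(\sigma)$, estimate $(\ref{seven})$ places $s_C(C(\sigma))$ in $(Sd_r\,C)[Sd^i\,\sigma]$, the map $g$ moves it into $(Sd_r\,D)[N_{\ep(X)}(Sd^i\,\sigma)]$, and $(\ref{eight})$ then lands it in $D[\sigma]=\bigoplus_{\rho\leqslant\sigma}D(\rho)$, so $f_{\rho,\sigma}=0$ unless $\rho\leqslant\sigma$; the same chase applies to $h$. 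For the homotopies, concatenating $s_Dr_D\simeq\id$ (via $P_D$) with $g^{-1}g\simeq\id$ (via $Q_C$) gives $g^{-1}(s_Dr_D)g\simeq\id_{Sd_r\,C}$ via $g^{-1}P_Dg+Q_C$; pre- and post-composing with $s_C$, $r_C$ and using $r_Cs_C=\id_C$ yields $hf=r_Cg^{-1}(s_Dr_D)gs_C\simeq\id_C$ via $H_C:=r_Cg^{-1}P_Dg\,s_C+r_CQ_Cs_C$, and symmetrically $fh\simeq\id_D$ via $H_D:=r_DgP_Cg^{-1}s_D+r_DQ_Ds_D$. Chasing $C(\sigma)$ through the worst term of $H_C$: $s_C$ gives $(Sd_r\,C)[Sd^i\,\sigma]$, $g$ gives the neighbourhood $N_{\ep(X)}(Sd^i\,\sigma)$, $P_D$ preserves this neighbourhood by $(\ref{nine})$, $g^{-1}$ enlarges it to $N_{2\ep(X)}(Sd^i\,\sigma)$, and $(\ref{eight})$ with $\ep^\prime=2\ep(X)<\comesh(X)$ lands everything in $C[\sigma]$; the term $r_CQ_Cs_C$ is handled identically with $\ep(X)$ in place of $2\ep(X)$. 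Hence $H_C\in\A^*(X)$, and likewise $H_D\in\A^*(X)$, so $f$ is a chain equivalence $C\to D$ in $\A^*(X)$.

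For $\A_*(X)$ the argument is the same with the dual data: one sets $f:=s^D\circ g\circ r^C$ and $h:=s^C\circ g^{-1}\circ r^D$ and runs the identical chases using $(\ref{rCeqn})$, $(\ref{sCeqn})$ and $(\ref{PCeqn})$ in place of $(\ref{seven})$, $(\ref{eight})$ and $(\ref{nine})$. The only differences are that the controlled neighbourhoods $N_{\ep^\prime}(Sd^i\,\sigma)$ are replaced throughout by the costar-type regions $\bigcup_{\tau\geqslant\sigma}\bigl(Sd^i\,\tau\backslash N_{\ep^\prime}(\partial\tau\backslash\sigma)\bigr)$, on which a morphism of bound $\leqslant\ep(X)$ shrinks the excluded part by at most $\ep(X)$ while $P^C$ shrinks the excluded part by a factor of two rather than preserving it (by $(\ref{PCeqn})$); a bounded count then shows the excluded part survives all the compositions appearing in $f$, $h$, $H_C$, $H_D$ — hence co-triangularity holds — precisely because $3\ep(X)<\comesh(X)$, which is exactly what the choice of $\ep(X)$ and Proposition~\ref{technicalprop} were arranged to provide.

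The substance of the proof, and the only place the numerical hypotheses are used, is this bookkeeping of supports: each composite occurring in $f$, $h$, $H_C$, $H_D$ (and their $\A_*(X)$-analogues) must push the summand over $\sigma$ into a region whose image under $r$ lies in $\sigma$ (respectively in $\mathrm{St}(\sigma)$), and that containment is what forces the relevant triangularity. Since each bounded map $g,g^{-1},Q_C,Q_D$ distorts a controlled neighbourhood by at most $\ep(X)$, while the retraction homotopies $P_C,P_D$ do not distort it at all in the covariant case and only by a bounded factor in the contravariant case, the total distortion never leaves the range controlled by Corollary~\ref{rscd}; this is the crux, and no $\ep$-limiting argument or Eilenberg swindle is needed, finiteness of $C$ and $D$ making every assembly and composition finite. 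One may afterwards invoke Proposition~\ref{chcont} to observe that $f_{\sigma,\sigma}\colon C(\sigma)\to D(\sigma)$ is then automatically a chain equivalence in $\A$ for every $\sigma$, though this is not required.
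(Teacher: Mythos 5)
Your proposal is correct and follows essentially the same route as the paper: conjugate the given controlled equivalence $g$ by the comparison maps of Corollary~\ref{rscd} (giving $f=r_Dgs_C$, $h=r_Cg^{-1}s_D$ and the homotopies $r_C(Q_C+g^{-1}P_Dg)s_C$, etc.) and verify triangularity by exactly the support chases the paper calls ``arguing by supports''. The only quibble is quantitative: with $\ep(X)=\comesh(X)/4$ and parameter $3\ep(X)$, the worst contravariant homotopy term $s^Cg^{-1}P^Dg\,r^C$ erodes the excluded collar $N_{3\ep}\to N_{2\ep}\to N_{\ep}\to N_{0}$ before $s^C$ is applied, leaving no margin for $(\ref{sCeqn})$; the paper takes $\ep=\ep^\prime/5$ precisely so that a collar $N_\ep$ survives, and you should shrink your constant accordingly.
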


\begin{proof}
Let $\ep^\prime(X)$ and $i(X,\ep)$ be chosen as in Proposition \ref{technicalprop} and its corollaries. Set $\ep(X) = \frac{1}{5}\ep^\prime(X)$ and let $i\geqslant i(X,\ep)$. First suppose that $C,D\in \BB(\A^*(X))$ and that there exists a chain equivalence 
\begin{displaymath}
 \xymatrix{ (Sd_r\, C,d_{Sd_r\,C},Q_C) \ar@<0.5ex>[r]^{f^i} & (Sd_r\, D,d_{Sd_r\,D},Q_D) \ar@<0.5ex>[l]^{g^i}
}
\end{displaymath}
with control $\ep$. The following composition is also a chain equivalence: 
\begin{displaymath}
 \xymatrix{ (C,d_C,r_C(Q_C + g^i P_D f^i)s_C) \ar@<0.5ex>[rr]^-{r_D f^i \circ s_C} && (D,d_D,r_D(Q_D + f^i P_C g^i)s_D). \ar@<0.5ex>[ll]^-{r_C g^i \circ s_D}
}
\end{displaymath}
Examining this chain equivalence carefully we observe that it is in fact a chain equivalence in $\A^*(X)$:

Consider $r_Df^is_C$ applied to $C(\sigma)$ which we think of as supported on $\mathring{\sigma}$. By $(\ref{seven})$, $s_C(C(\sigma))$ is supported on $Sd^i\,\sigma$. Since $f^i$ has bound $\ep$, we see that $f^is_C(C(\sigma))$ is supported on $N_\ep(Sd^i\,\sigma)$. By $(\ref{eight})$, $r_Df^is_C(C(\sigma))$ is supported on $\sigma$, thus $r_Df^is_C$ is a morphism in $\A^*(X)$. For brevity in the following analyses we call this reasoning \textit{arguing by supports} and write
\begin{displaymath}
 \xymatrix{ r_Df^is_C: \mathring{\sigma} \ar[r]^-{(\ref{seven})} & Sd^i\, \sigma \ar[r] & N_\ep(Sd^i\, \sigma) \ar[r]^-{(\ref{eight})} & \sigma.
}
\end{displaymath}
See \Figref{Fig:Supports} for an example of this argument for a $2$-simplex.
\begin{figure}[h!]
\begin{center}
{
\psfrag{sstar}{$s_C$}
\psfrag{tstar}{$r_D$}
\psfrag{fi}{$f^i$}
\includegraphics[width=11cm]{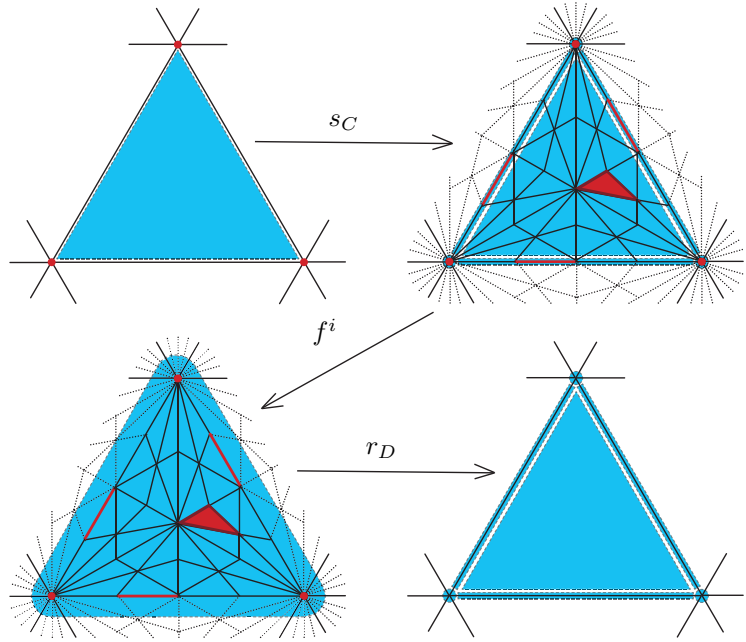}
}
\caption{Arguing by supports to show $r_Df^is_C\in \A^*(X)$.}
\label{Fig:Supports}
\end{center}
\end{figure}

By exactly the same argument we see that $r_Cg^is_D$, $r_CQ_Cs_C$ and $r_DQ_Ds_D$ are all morphisms in $\A^*(X)$, noting that $g^i$, $Q_C$ and $Q_D$ all have bound $\ep$. This just leaves $r_C(g^i(P_D)_*f^i)s_C$ and $r_D(f^i(P_C)_*g^i)s_D$ to check. Arguing by supports both of these send 
\begin{displaymath}
 \xymatrix{ \mathring{\sigma} \ar[r]^-{(\ref{seven})} & Sd^i\, \sigma \ar[r] & N_\ep(Sd^i\, \sigma) \ar[r]^-{(\ref{nine})} & N_\ep(Sd^i\, \sigma) \ar[r] & N_{2\ep}(Sd^i\, \sigma) \ar[r]^-{(\ref{eight})} & \sigma,
}
\end{displaymath}
so they are also morphisms of $\A^*(X)$, thus $r_Df^is_C: C\to D$ is a chain equivalence in $\A^*(X)$ as required.

Next suppose that $C,D\in \BB(\A_*(X))$ and that there exists a chain equivalence 
\begin{displaymath}
 \xymatrix{ (Sd_r\, C,d_{Sd_r\,C},Q_C) \ar@<0.5ex>[r]^{f^i} & (Sd_r\, D,d_{Sd_r\,D},Q_D) \ar@<0.5ex>[l]^{g^i}
}
\end{displaymath}
with control $\ep$. Again the following composition is a chain equivalence which we observe to be a chain equivalence in $\A_*(X)$: 
\begin{displaymath}
 \xymatrix{ (C,d_C,s^C(Q_C + g^iP^Df^i)r^C) \ar@<0.5ex>[rr]^-{s^Df^ir^C} && (D,d_D,s^D(Q_D + f^iP^Cg^i)r^D). \ar@<0.5ex>[ll]^-{s^Cg^ir^D}
}
\end{displaymath}
All of $s^Df^ir^C$, $s^Cg^ir^D$, $s^CQ_Cr^C$ and $s^DQ_Dr^D$ are morphisms in $\A_*(X)$ as they send
\begin{displaymath}
 \xymatrix{ \mathring{\sigma} \ar[r]^-{(\ref{rCeqn})} & \bigcup_{\tau\geqslant\sigma}(Sd^i\,\tau \backslash N_{5\ep}(\partial\tau\backslash \sigma)) \ar[r] & \bigcup_{\tau\geqslant\sigma}(Sd^i\,\tau \backslash N_{4\ep}(\partial\tau\backslash \sigma)) \ar[r]^-{(\ref{sCeqn})} & \bigcup_{\tau\geqslant \sigma}\mathring{\tau}. 
}
\end{displaymath}
Similarly, both  $s^C(g^iP^Df^i)r^C$ and $s^D(f^iP^Cg^i)r^D$ send 
\begin{displaymath}
 \xymatrix@R=3mm@C=7.8mm{ \mathring{\sigma} \ar[r]^-{(\ref{rCeqn})} & \bigcup_{\tau\geqslant\sigma}(Sd^i\,\tau \backslash N_{5\ep}(\partial\tau\backslash \sigma)) \ar[r] & \bigcup_{\tau\geqslant\sigma}(Sd^i\,\tau \backslash N_{4\ep}(\partial\tau\backslash \sigma)) \ar[r]^-{(\ref{PCeqn})} & \bigcup_{\tau\geqslant\sigma}(Sd^i\,\tau \backslash N_{2\ep}(\partial\tau\backslash \sigma)) \\
&\ar[r] & \bigcup_{\tau\geqslant\sigma}(Sd^i\,\tau \backslash N_{\ep}(\partial\tau\backslash \sigma)) \ar[r]^-{(\ref{sCeqn})} & \bigcup_{\tau\geqslant \sigma}\mathring{\tau}, 
}
\end{displaymath}
so are also morphisms in $\A_*(X)$. See \Figref{Fig:Supports2} for an example of this argument for a $1$-simplex.
\begin{figure}[ht]
\begin{center}
{
\psfrag{t}{}
\psfrag{A}[t][t]{$\mathring{\sigma}$}
\psfrag{B}[t][t]{$\;\bigcup_{\tau\geqslant\sigma}(Sd^i\,\tau \backslash N_{5\ep}(\partial\tau\backslash \sigma))$}
\psfrag{C}[t][t]{$\bigcup_{\tau\geqslant\sigma}(Sd^i\,\tau \backslash N_{\ep}(\partial\tau\backslash \sigma))$}
\psfrag{D}[t][t]{$\bigcup_{\tau\geqslant \sigma}\mathring{\tau}$}
\psfrag{a}[t][t]{$r^C$}
\psfrag{b}[tr][tr]{$g^iP^Df^i\quad\quad$}
\psfrag{c}[t][t]{$s^C$}
\includegraphics[width=9cm]{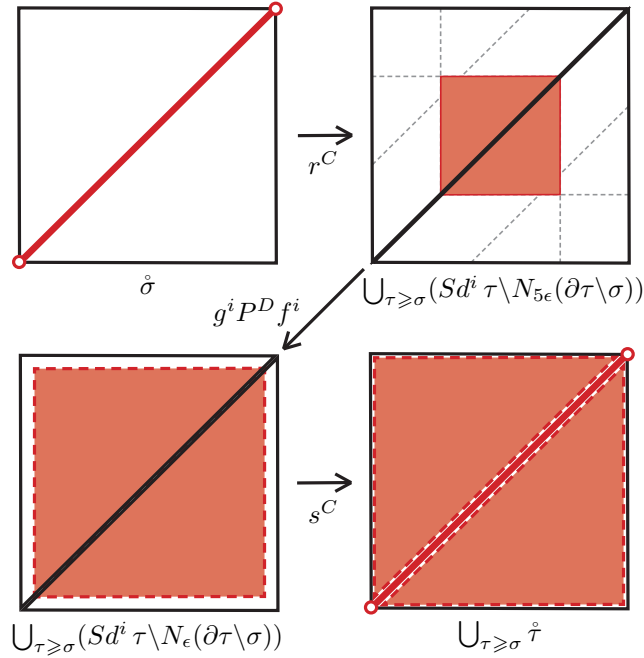}
}
\caption{Arguing by supports to show $s^Cg^iP^Df^ir^C\in \A_*(X)$.}
\label{Fig:Supports2}
\end{center}
\end{figure}
Thus $s^Df^ir^C:C\to D$ is a chain equivalence in $\A_*(X)$ as required.
\end{proof}

\section{Codimension one splitting over the open cone}\label{section:openconeandsplitting}
We now consider the following algebraic splitting problem. Given a chain complex $D\in \BB(\A(X\times \R))$ when can we find a chain complex $C\in \brc{\BB(\A^*(X))}{\BB(\A_*(X))}$ such that \[ D \simeq \brc{C \otimes \Delta^{lf}_*(\R)}{C \otimes \Delta^{-*}(\R)}?\] To answer this we must triangulate $X\times \R$ and decide what $\otimes$ means in this situation.

We construct a $\Z$-parameter family of triangulations of $X\times\R$ which have a finite mesh when measured in $O(X_+)$ with the coning map $j_X:X\times\R \to O(X_+)$.

\begin{constr}\label{buildt1xtimesR}
Let $X$ be an $(n-1)$-dimensional locally finite simplicial complex. Define a set of points $\{v_i\}_{i\in\Z}$ in $\R$ by
\[ v_i := \brcc{i,}{i\leqslant 0,}{\sum_{j=0}^{i-1} \left(\dfrac{n+2}{n+1}\right)^j,}{i>0.}\]
\end{constr}
Note that for all $i>0$:
\begin{equation}\label{scalefactor}
v_i - v_{i-1} = \dfrac{n+1}{n+2}(v_{i+1} - v_i).
\end{equation}
Fix a triangulation of $X\times I$ once and for all which we write as $\mathrm{Prism}(X,X)$. Following Remark \ref{prismremark} and Example \ref{prismexample} this gives a triangulation of $\mathrm{Prism}(X,Sd\,X)$ that is an iterated stellar subdivision of $\mathrm{Prism}(X,X)$ which we further use to set \[\mathrm{Prism}(Sd^i\, X,Sd^{i+1}\, X):= Sd^i\,\mathrm{Prism}(X,Sd\,X).\] For all $j\in \Z$ let $t^j(X\times \R)$ denote the triangulation of $X\times\R$ where $X\times [v_i,v_{i+1}]$ is given the triangulation $\mathrm{Prism}(Sd^{\max\{i-j,0\}}\,X,Sd^{\max\{i-j+1,0\}}\,X)$.

\begin{figure}[h!]
\begin{center}
{
\psfrag{2.5}{$\dfrac{7}{3}$}
\psfrag{1}{$1$}
\psfrag{0}{$0$}
\psfrag{-1}{$-1$}
\psfrag{t0}{$t^0(X\times\R)$}
\psfrag{t1}{$t^1(X\times\R)$}
\includegraphics[width=9cm]{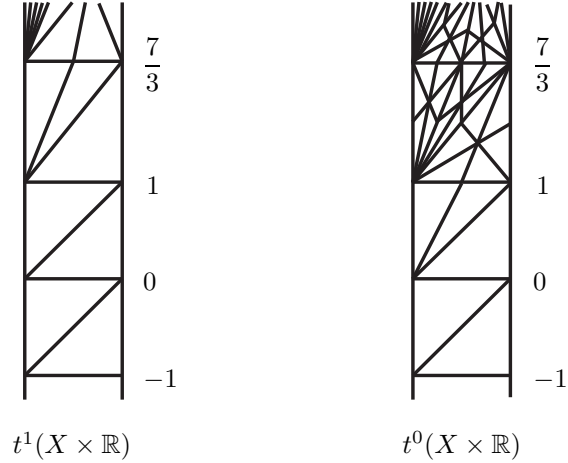}
}
\caption{Triangulating $[0,1]\times\R$.}
\label{Fig:tiXxR}
\end{center}
\end{figure}

\begin{rmk}\label{exptransassubdiv}
Note that for all $j\in \Z$, $k\geqslant 0$: $t^j(X\times\R)$ is an iterated stellar subdivision of $t^{j+k}(X\times\R)$ that is a $k$-fold iterated barycentric subdivision on $X\times [v_{j+k},\infty)$ and the identity subdivision on $X\times (-\infty,v_j]$.
\end{rmk}

To show that the triangulations $\{t^j(X\times\R)\}_{j\in\Z}$ just defined have finite mesh measured in $O(X_+)$ we need to study the metric on $O(X_+)$. In particular we have

\begin{prop}\label{annoyingconeestimates}
All simplices $\tau\in t^j(X\times [1,\infty))$ satisfy \[ \mesh_{O(X_+)}(Sd\,\tau) \leqslant \left( 1 - \dfrac{1}{(|\tau| +1)^3}\right) \diam_{O(X_+)}(\tau).\]
\end{prop}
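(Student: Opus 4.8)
The plan is to estimate $\mesh_{O(X_+)}(Sd\,\tau)$ against $\diam_{O(X_+)}(\tau)$ by computing directly with the concrete formula for the cone metric on the prism region, and then reducing to the elementary estimate $\mesh(Sd\,\sigma)\leqslant\frac{|\sigma|\diam(\sigma)}{|\sigma|+1}$ recalled in \S\ref{section:prelims}. First I would set up coordinates: any $\tau\in t^j(X\times[1,\infty))=t^j(X\times[v_1,\infty))$ lies inside a single block $\sigma\times[v_i,v_{i+1}]$ of the triangulation, with $\sigma\in X$ and $i\geqslant 1$, so the coordinate projection $\pi:\tau\to|X|$ carries $\tau$ into $\sigma$ and the $\R$--coordinates (``heights'') of the vertices of $\tau$ lie in an interval $[h_-,h_+]\subseteq[v_i,v_{i+1}]$, in particular bounded below by $v_i\geqslant 1$. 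On this region the coning map $j_X$ identifies $d_{O(X_+)}$ with the metric of \S\ref{section:prelims}: for $t,s\geqslant 1$, $d_{O(X_+)}\big((x,t),(x',s)\big)=\min\{t,s\}\,d_X(x,x')+|t-s|$. I would also record the (elementary but genuinely needed) fact that with this formula the $d_{O(X_+)}$--diameter of a simplex of such a block is attained at a pair of its vertices; this uses only that the $d_X$--diameter of any simplex of $X$ is at most $\sqrt 2$, which is far below the threshold at which the concavity of $\min\{t,s\}$ in the height variables could force the extremum into the interior of an edge.

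Next comes the reduction. A simplex of $Sd\,\tau$ has the form $\widehat\tau_0\ldots\widehat\tau_p$ for a chain $\tau_0<\cdots<\tau_p\leqslant\tau$, so $\mesh_{O(X_+)}(Sd\,\tau)$ equals the supremum of $d_{O(X_+)}(\widehat{\rho'},\widehat\rho)$ over pairs of nested faces $\rho'\subsetneq\rho\leqslant\tau$; and since $\diam_{O(X_+)}(\rho)\leqslant\diam_{O(X_+)}(\tau)$ while $1-(|\rho|+1)^{-3}\leqslant 1-(|\tau|+1)^{-3}$, it suffices to bound $d_{O(X_+)}(\widehat{\rho'},\widehat\tau)$ by $\big(1-(|\tau|+1)^{-3}\big)\diam_{O(X_+)}(\tau)$ for each proper face $\rho'\subsetneq\tau$. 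Writing $m=|\tau|$ and $D_X=\diam_{d_X}\!\big(\pi(\tau)\big)$, I would decompose $d_{O(X_+)}(\widehat{\rho'},\widehat\tau)=\min\{h',\bar h\}\,\delta+|h'-\bar h|$, where $h',\bar h$ are the heights of $\widehat{\rho'},\widehat\tau$ and $\delta=d_X(\pi\widehat{\rho'},\pi\widehat\tau)$, and apply the classical ``barycentre within $\frac{k}{k+1}$ of every vertex'' bound separately in the $X$--coordinate and in the one–dimensional height coordinate to get $\delta\leqslant\frac{m}{m+1}D_X$ and $|h'-\bar h|\leqslant\frac{m}{m+1}(h_+-h_-)$. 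On the other side, $\diam_{O(X_+)}(\tau)\geqslant h_-D_X$ (from the vertex pair realising $D_X$) and $\diam_{O(X_+)}(\tau)\geqslant h_+-h_-$.

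The hard part is the remaining factor $\min\{h',\bar h\}$: bounding it crudely by $h_+$ (or even by $h_+-\tfrac{h_+-h_-}{m+1}$) introduces a ``height distortion'' $h_+/h_-\leqslant v_{i+1}/v_i$, and by Construction \ref{buildt1xtimesR} and $(\ref{scalefactor})$ this ratio, although always bounded, can be as large as $v_2/v_1=\tfrac{2n+3}{n+1}$, which is close to $2$ and on its own would cancel — in fact slightly overshoot — the gain $\tfrac{m}{m+1}<1$. So the naive coordinatewise bounds are a hair too weak, and the technical heart of the proof is to see that this distortion is large only when the $X$--direction supplies most of $\diam_{O(X_+)}(\tau)$ and does so near the bottom of the block, in which case $\diam_{O(X_+)}(\tau)\geqslant h_-D_X$ is itself correspondingly large and absorbs the distortion; whereas when $\tau$ occupies a substantial fraction of the block's height the term $h_+-h_-\leqslant\diam_{O(X_+)}(\tau)$ dominates, and in the intermediate range one must in addition exploit finer information about which vertex pairs actually realise $\mesh(Sd\,\tau)$ and the heights at which their $X$--distances are measured. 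Carrying out this case analysis and feeding in the \emph{tuned} inequalities $v_{i+1}/v_i\leqslant\tfrac{2n+3}{n+1}\leqslant\tfrac52$, $(v_{i+1}-v_i)/v_i\leqslant\tfrac{n+2}{n+1}$, $D_X\leqslant\sqrt 2$ and $m=|\tau|\leqslant\dim(X\times\R)=n$, one checks in each case that the ratio is at most $1-(m+1)^{-3}$. The exponent $3$ plays no structural role: it is simply a clean choice comfortably larger than what any individual case demands — for instance the edge case $m=1$ reduces to $(2v_{i+1}-5v_i)\,\delta\leqslant 3(v_{i+1}-v_i)$, which holds at once because $2v_{i+1}\leqslant 5v_i$. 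This bookkeeping is the only real work, and is precisely the ``annoying'' content the proposition is named for.
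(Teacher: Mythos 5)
Your reduction and your diagnosis of the difficulty are both on target, but the proof is not actually completed: the step you yourself call the ``technical heart'' --- the case analysis over how $\diam_{O(X_+)}(\tau)$ is split between the $X$--direction and the height direction, plus the unspecified ``finer information about which vertex pairs actually realise $\mesh(Sd\,\tau)$'' in the intermediate range --- is exactly where all the content lies, and it is left as an assertion. Worse, the framework you set up discards the one quantitative mechanism that makes the estimate true. The paper's proof follows Hatcher's argument literally: the relevant quantity is the distance from a vertex $(x,s)$ to the point $\tfrac{1}{m+1}(x,s)+\tfrac{m}{m+1}(y,t)$ at parameter $\tfrac{m}{m+1}$ along a segment of $\tau$. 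When $s\geqslant t$ the height of that intermediate point is the weighted average $\tfrac{s+mt}{m+1}$, which exceeds the comparison height $t=\min\{s,t\}$ by only $\tfrac{s-t}{m+1}$; one computes
\[
d_{O(X_+)}\Bigl((x,s),\tfrac{1}{m+1}(x,s)+\tfrac{m}{m+1}(y,t)\Bigr)\leqslant\Bigl(1+\tfrac{s-t}{(m+1)t}\Bigr)\tfrac{m}{m+1}\,d_{O(X_+)}\bigl((x,s),(y,t)\bigr),
\]
and since $\tfrac{s-t}{t}\leqslant\tfrac{n+2}{n+1}\leqslant m+1$ on the blocks $X\times[v_i,v_{i+1}]$ with $i\geqslant 1$, the distortion factor is at most $1+\tfrac{1}{m+1}$, giving $\bigl(1+\tfrac{1}{m+1}\bigr)\tfrac{m}{m+1}=1-\tfrac{1}{(m+1)^2}\leqslant 1-\tfrac{1}{(m+1)^3}$. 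The crucial point is the extra $\tfrac{1}{m+1}$ suppressing the height distortion; it appears because the barycentric point sits close in parameter to the far endpoint of the segment, and because the comparison is made against the height of that segment's own lower endpoint rather than against the global minimum height $h_-$ of $\tau$.

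Your decomposition $\min\{h',\bar h\}\,\delta+|h'-\bar h|$ compared against $h_-D_X$ and $h_+-h_-$ throws both of these away: bounding $\min\{h',\bar h\}$ by $h_+$, or even by $h_+-\tfrac{h_+-h_-}{m+1}$, against the reference height $h_-$ yields an unsuppressed distortion of order $\tfrac{v_{i+1}-v_i}{v_i}$, which, as you yourself compute, overwhelms the gain $\tfrac{m}{m+1}$. Nothing in the sketch indicates how the proposed case analysis recovers the lost factor of $\tfrac{1}{m+1}$ --- the ``intermediate range'' is precisely the hard case and is not argued, and the single numerical check you offer ($m=1$) is stated without derivation. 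To complete the proof along your lines you should argue segment by segment, as above, rather than coordinate by coordinate. (Separately, your claim that the $d_{O(X_+)}$--diameter of a simplex is attained at a pair of vertices also deserves a real argument: for fixed $(x,t)$ the function $(x',s)\mapsto\min\{t,s\}\,d_X(x,x')+|t-s|$ is a product of a concave factor and a convex factor plus a convex term, so attainment at extreme points is not automatic; but this is secondary next to the missing main estimate.)
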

\begin{proof}
Recall that for a simplex $\tau$ linearly embedded in Euclidean space we have \[\mesh(\tau) \leqslant \dfrac{|\tau|}{|\tau|+1}\diam(\tau).\] The proof of this fact in \cite{hatcher} may be adapted almost verbatim to the metric on $O(X_+)$ to give the desired result. The key difference is that when measuring the straight line in $X\times\R$ from $(x,s)$ to $(y,t)$ in $O(X_+)$ the usual equality for Euclidean space need not hold:
\[ d_{O(X_+)}\left((x,s), \dfrac{1}{m+1}(x,s) + \dfrac{m}{m+1}(y,t)\right) \neq   \dfrac{m}{m+1}d_{O(X_+)}((x,s),(y,t)).\] Let $m\leqslant n=\dim(X\times\R)$. We obtain an inequality using the formula for the metric on $O(X_+)$ as follows. Suppose $s\leqslant t$. Then
\begin{align}
 d_{O(X_+)}\left((x,s), \dfrac{1}{m+1}(x,s) + \dfrac{m}{m+1}(y,t)\right) &= s d_{X}\left(x,\dfrac{1}{m+1}x + \dfrac{m}{m+1}y\right) \notag \\
 &\quad+ \left(\dfrac{1}{m+1}s + \dfrac{m}{m+1}t-s\right) \notag \\
 &= \dfrac{m}{m+1}(s d_X(x,y) + (t-s)) \notag \\
 &= \dfrac{m}{m+1}d_{O(X_+)}((x,s),(y,t)). \notag
\end{align}
Suppose instead that $s\geqslant t$. Then
\begin{align}
 d_{O(X_+)}\left((x,s), \dfrac{1}{m+1}(x,s) + \dfrac{m}{m+1}(y,t)\right) &= \left(\dfrac{s}{m+1} + \dfrac{mt}{m+1} \right)d_{X}\left(x,\dfrac{x}{m+1} + \dfrac{my}{m+1}\right) \notag \\
&\quad+ \left(s-\dfrac{s}{m+1} + \dfrac{mt}{m+1}\right) \notag \\
&= \frac{mt + s}{(m+1)t} \dfrac{mt}{m+1}d_X(x,y) + \dfrac{m}{m+1}(s-t) \notag \\
&\leqslant \frac{mt + s}{(m+1)t} \left( \dfrac{mt}{m+1}d_X(x,y) + \dfrac{m}{m+1}(s-t) \right)\notag \\
&= \left(1 + \dfrac{s-t}{(m+1)t}\right)\dfrac{m}{m+1}d_{O(X_+)}((x,s),(y,t)). \notag 
\end{align}
Since each simplex of $t^j(X\times [1,\infty))$ is contained in a block $X\times [v_i,v_{i+1}]$ for some $i\geqslant 1$ we have
\begin{align}
 s-t &\leqslant v_{i+1}-v_i = \left(\dfrac{n+2}{n+1}\right)^i \notag \\
 t &\geqslant v_i = \sum_{k=0}^{i-1} \left(\dfrac{n+2}{n+1}\right)^k = (n+1)\left( \left(\dfrac{n+2}{n+1}\right)^i -1\right). \label{needlatertechnical}
\end{align}
Hence 
\[\dfrac{s-t}{t} \leqslant \dfrac{\left(\dfrac{n+2}{n+1}\right)^i}{(n+1)\left(\left(\dfrac{n+2}{n+1}\right)^i -1 \right)}\leqslant \dfrac{1}{n+1} \left(1- \left(\dfrac{n+1}{n+2}\right)\right)^{-1}= \dfrac{n+2}{n+1} \]
and so
\begin{align}
d_{O(X_+)}\left((x,s), \dfrac{1}{m+1}(x,s) + \dfrac{m}{m+1}(y,t)\right) &\leqslant   \left(1+ \dfrac{n+2}{(m+1)(n+1)}\right)\dfrac{m}{m+1} d_{O(X_+)}((x,s),(y,t)) \notag \\
&\leqslant   \left(1+ \dfrac{m+1}{(m+1)^2}\right)\dfrac{m}{m+1} d_{O(X_+)}((x,s),(y,t)) \notag \\
&\leqslant \left(1 - \dfrac{1}{(m+1)^3}\right)d_{O(X_+)}((x,s),(y,t))\notag
\end{align}
from which the result follows.
\end{proof}

\begin{prop}
For all $j\in \Z$: $\mesh_{O(X_+)}(t^j(X\times\R)) < \infty$.
\end{prop}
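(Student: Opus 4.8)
Here is how I would go about proving the statement; it comes down to a blockwise estimate, made delicate only by the geometry of the cone metric in the third of the three ranges below.

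The plan is to bound $\diam_{O(X_+)}(\tau)$ for each simplex $\tau$ of $t^j(X\times\R)$ according to which block $X\times[v_i,v_{i+1}]$ contains it, and to split the range of $i$ into three pieces. Throughout write $n=\dim(X\times\R)$, so $\dim X=n-1$, and put $\alpha=\tfrac{n}{n+1}$, $\beta=\tfrac{n+2}{n+1}$; recall $v_{i+1}-v_i=\beta^i$ and $v_{i+1}\leqslant(n+1)\beta^{i+1}=(n+2)\beta^i$ for $i\geqslant1$. Since each triangulation $\mathrm{Prism}(Sd^a\,X,Sd^{a+1}\,X)=Sd^a\,\mathrm{Prism}(X,Sd\,X)$ is built prism-by-prism over the blocks $\sigma\times I$, $\sigma\in X$, and then subdivided, every simplex $\tau$ of $t^j(X\times\R)$ lies in a single $\sigma\times[v_i,v_{i+1}]$; so, writing $\pi_X,\pi_\R$ for the two projections, the cone-metric formula gives
\[\diam_{O(X_+)}(\tau)\;\leqslant\;\max\{v_{i+1},0\}\,\diam_X(\pi_X\tau)+\diam_\R(\pi_\R\tau),\]
with $\diam_X(\pi_X\tau)\leqslant\diam_X(\sigma)=\sqrt2$ always.

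First, for $i\leqslant-1$ the block lies in $X\times(-\infty,0]$, where the cone coordinate vanishes, so $\diam_{O(X_+)}(\tau)\leqslant 0+(v_{i+1}-v_i)=1$. Secondly, there are only finitely many blocks with $0\leqslant i\leqslant\max\{0,j-1\}$ (namely $\max\{1,j\}$ of them), and on each of those $\diam_{O(X_+)}(\tau)\leqslant v_{\max\{1,j\}}(\sqrt2+1)<\infty$. So it remains to bound, uniformly in $i$, the simplices lying in the blocks $X\times[v_i,v_{i+1}]$ with $i\geqslant\max\{1,j\}$. Fix such an $i$ and set $k=i-j\geqslant0$. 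By definition $t^j(X\times\R)$ restricted to $X\times[v_i,v_{i+1}]$ is $Sd^k\,\mathrm{Prism}(X,Sd\,X)$ with the $I$-coordinate rescaled affinely onto $[v_i,v_{i+1}]$; call this rescaling $S_i$, so $S_i$ multiplies the $\R$-coordinate by $\lambda_i=v_{i+1}-v_i=\beta^i\geqslant1$, fixes the $X$-coordinate, and commutes with barycentric subdivision. Thus $\tau=S_i(\tau')$ for a simplex $\tau'$ of $Sd^k\,\mathrm{Prism}(X,Sd\,X)$ with $\pi_X\tau=\pi_X\tau'$. Since $\mathrm{Prism}(X,Sd\,X)$ has dimension $n$ and each of its simplices lies in some $\sigma\times I$ (hence has $\ell_2$-diameter $\leqslant\sqrt3$), iterating $\mesh(Sd\,Y)\leqslant\tfrac{\dim Y}{\dim Y+1}\mesh(Y)$ gives $\mesh_{\ell_2}(Sd^k\,\mathrm{Prism}(X,Sd\,X))\leqslant\alpha^k\sqrt3$, whence $\diam_X(\pi_X\tau)\leqslant\diam_{\ell_2}(\tau')\leqslant\alpha^k\sqrt3$ and $\diam_\R(\pi_\R\tau)=\lambda_i\,\diam_\R(\pi_\R\tau')\leqslant\beta^i\alpha^k\sqrt3$. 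Using $v_{i+1}\leqslant(n+2)\beta^i$ this yields
\[\diam_{O(X_+)}(\tau)\;\leqslant\;(n+2)\beta^i\alpha^k\sqrt3+\beta^i\alpha^k\sqrt3\;=\;(n+3)\sqrt3\,\alpha^{-j}(\alpha\beta)^i,\]
and since $\alpha\beta=\tfrac{n(n+2)}{(n+1)^2}=\tfrac{n^2+2n}{n^2+2n+1}<1$ and $i\geqslant1$, the right-hand side is at most $(n+3)\sqrt3\,\alpha^{-j}=(n+3)\sqrt3\,(\tfrac{n+1}{n})^j$, independently of $i$. Combining the three ranges, $\mesh_{O(X_+)}(t^j(X\times\R))\leqslant\max\bigl\{1,\;v_{\max\{1,j\}}(\sqrt2+1),\;(n+3)\sqrt3\,(\tfrac{n+1}{n})^j\bigr\}<\infty$.

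The only delicate point, and the place where Construction \ref{buildt1xtimesR} earns its keep, is the uniform bound in the third range. The crude estimate $\diam_{O(X_+)}(\tau)\leqslant\sqrt2\,v_{i+1}\,\diam_{\ell_2}(\tau)$ is useless here: it multiplies the $\R$-extent of $\tau$ (which, even after rescaling by $S_i$ and $k$ subdivisions, is only of order $(\alpha\beta)^i$, hence bounded) by the height $v_{i+1}\sim\beta^i$, producing the divergent bound of order $(\alpha\beta^2)^i$ with $\alpha\beta^2=\tfrac{n(n+2)^2}{(n+1)^3}>1$. One must instead separate the $X$- and $\R$-contributions and exploit the fact that the $X$-direction diameter of a prism simplex is governed by the subdivision level $k=i-j$ alone, not by the block width, precisely because the rescaling $S_i$ leaves the $X$-coordinate untouched — this is exactly the trade-off, one barycentric subdivision against a block-width growth of $\tfrac{n+2}{n+1}$, for which the triangulations $t^j$ were designed. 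Everything else is the cone-metric formula, the standard subdivision-mesh estimate, and elementary arithmetic with the $v_i$.
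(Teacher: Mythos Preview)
Your proof is correct and in fact takes a more elementary route than the paper. Both arguments split into the negative blocks (trivial), finitely many low blocks (trivial), and the infinitely many high blocks where something must be proved. For the high blocks the paper first establishes a cone-metric subdivision lemma (Proposition~\ref{annoyingconeestimates}): barycentric subdivision shrinks $O(X_+)$-diameters by a uniform factor $1-\tfrac{1}{(n+1)^3}$. It then bounds the ``first'' high block directly by $B(j)=v_{j+1}\sqrt2+\beta^j$ and applies the lemma $k$ times to bound the $(j+k)$th block by $(1-\tfrac{1}{(n+1)^3})^k B(j+k)$, finally checking that this quantity has a finite limit. Your argument sidesteps the cone-metric lemma entirely: you separate the $X$- and $\R$-projections of a simplex, note that the affine rescaling $S_i$ fixes the $X$-coordinate, and apply the standard Euclidean estimate $\mesh(Sd^k Y)\leqslant\alpha^k\mesh(Y)$ on each $\sigma\times I$ to bound both projected diameters by $\alpha^k\sqrt3$ before rescaling. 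Recombining via the cone-metric formula gives the explicit bound $(n+3)\sqrt3\,\alpha^{-j}(\alpha\beta)^i$ with $\alpha\beta<1$. Your approach is shorter and yields a clean closed-form bound; the paper's approach, though heavier here, isolates the cone-metric subdivision estimate as a standalone proposition that it records (with a quantitative corollary) for potential later use.
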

\begin{proof}
For all $j\in \Z$ it is easy to bound the mesh of the lower blocks:
\begin{align}
 \mesh_{O(X_+)}(t^j(X\times (-\infty,0])) &= 1 < \infty\notag \\
 \mesh_{O(X_+)}(t^j(X\times [0,1])) &\leqslant \mesh_X(X) + 1 < \infty. \notag 
\end{align}
By Proposition \ref{annoyingconeestimates} it suffices to show that $\mesh_{O(X_+)}(t^j(X\times [1,\infty)))<\infty$ for $j\geqslant 1$ as $t^j(X\times\R)$ is a subdivision of $t^{j+k}(X\times\R)$ for $k\geqslant 0$. 

So let $j\geqslant 0$ and consider $t^j(X\times [v_j,v_{j+1}])$. This block is triangulated with $\mathrm{Prism}(X,Sd\,X)$. We estimate the maximum size a simplex can have here when measured in $O(X_+)$:
\begin{align}
 \mesh_{O(X_+)}(t^j(X\times [v_j,v_{j+1}])) &\leqslant v_{j+1}\mesh_X(X) + (v_{j+1}-v_j) \notag \\
 &= v_{j+1}\sqrt{2} + \left(\dfrac{n+2}{n+1}\right)^j=: B(j)<\infty. \notag
\end{align}
Lower blocks necessarily have a mesh bounded by $B(j)$ so we just have to check higher blocks. By construction 
\[t^j(X\times [v_{j+k},v_{j+k+1}]) = Sd^k\, t^{j+k}(X\times [v_{j+k},v_{j+k+1}]),\] so by Proposition \ref{annoyingconeestimates}:
\begin{align}
\mesh_{O(X_+)}(t^j(X\times [v_{j+k},v_{j+k+1}])) &\leqslant ( 1 - \dfrac{1}{(n+1)^3})^k \mesh_{O(X_+)}(t^{j+k}(X\times [v_{j+k},v_{j+k+1}])) \notag \\
&= ( 1 - \dfrac{1}{(n+1)^3})^k B(j+k). \notag 
\end{align}
One can easily check that \[ \molim_{k\to \infty}( 1 - \dfrac{1}{(n+1)^3})^k B(j+k) < \infty,\]so we are done.
\end{proof}

\begin{cor}
Let $n = \dim(X)+1$. Then for all $j\in \Z$, $k\in\N$,  \[\mesh_{O(X_+)}(t^j(X\times [v_i,\infty))) \leqslant ( 1 - \dfrac{1}{(n+1)^3})^k \mesh_{O(X_+)}(t^{j+k}(X\times [v_i,\infty)))\] where $v_i = \max\{1,v_{j+k}\}$. 
\end{cor}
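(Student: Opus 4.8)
The plan is to reduce the statement to iterating the single barycentric subdivision estimate of Proposition \ref{annoyingconeestimates}, carried out uniformly over the slab $X\times[v_i,\infty)$. Two elementary consequences of the hypothesis $v_i = \max\{1,v_{j+k}\}$ do all the work. First, $v_i\geqslant 1$, so $X\times[v_i,\infty)$ is a subcomplex of $t^m(X\times[1,\infty))$ for every $m$, whence Proposition \ref{annoyingconeestimates} applies to all of its simplices and to all of its iterated barycentric subdivisions. Second, $v_i\geqslant v_{j+k}$, so $X\times[v_i,\infty)$ is a union of blocks of the slab $X\times[v_{j+k},\infty)$ on which, by Remark \ref{exptransassubdiv} together with Construction \ref{buildt1xtimesR}, $t^{j+k-l}$ is the $l$-fold iterated barycentric subdivision of $t^{j+k}$ for $0\leqslant l\leqslant k$; restricting those identities gives
\[ t^{j+k-l}|_{X\times[v_i,\infty)} \;=\; Sd^l\,\big(t^{j+k}|_{X\times[v_i,\infty)}\big),\qquad 0\leqslant l\leqslant k, \]
so in particular each stage is a single barycentric subdivision of the previous one.

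Next I would iterate. Since $\dim(X\times\R)=n$, every simplex $\tau$ occurring in any $t^m(X\times[1,\infty))$ has $|\tau|\leqslant n$, so Proposition \ref{annoyingconeestimates} gives
\[ \mesh_{O(X_+)}(Sd\,\tau)\;\leqslant\;\Big(1-\frac{1}{(|\tau|+1)^3}\Big)\diam_{O(X_+)}(\tau)\;\leqslant\;\Big(1-\frac{1}{(n+1)^3}\Big)\diam_{O(X_+)}(\tau). \]
Applying this to every simplex $\tau$ of $t^{j+k-l}|_{X\times[v_i,\infty)}$, and noting that every simplex of $Sd\big(t^{j+k-l}|_{X\times[v_i,\infty)}\big) = t^{j+k-l-1}|_{X\times[v_i,\infty)}$ lies inside $Sd\,\tau$ for some such $\tau$, I obtain
\[ \mesh_{O(X_+)}\big(t^{j+k-l-1}(X\times[v_i,\infty))\big)\;\leqslant\;\Big(1-\frac{1}{(n+1)^3}\Big)\,\mesh_{O(X_+)}\big(t^{j+k-l}(X\times[v_i,\infty))\big) \]
for $l=0,1,\dots,k-1$; concatenating these $k$ inequalities is precisely the assertion of the Corollary (the case $k=0$ being a tautology).

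The only point demanding real care is the index bookkeeping behind the identities $t^{j+k-l}|_{X\times[v_i,\infty)} = Sd^l(t^{j+k}|_{X\times[v_i,\infty)})$: one must check that the $\max\{\cdot,0\}$ truncations in the definition of the $t^m$ never interfere on the slab $X\times[v_i,\infty)$, so that $t^j$ is genuinely the \emph{full} $k$-fold barycentric subdivision of $t^{j+k}$ there, for both signs of $j+k$. This is exactly what the choice $v_i=\max\{1,v_{j+k}\}$ is engineered to guarantee: when $j+k\geqslant 1$ one has $v_i=v_{j+k}$ and Remark \ref{exptransassubdiv} applies verbatim on $X\times[v_{j+k},\infty)$, while when $j+k\leqslant 0$ one has $v_i=v_1=1$, $X\times[1,\infty)\subseteq X\times[v_{j+k},\infty)$, and one simply restricts the conclusion of Remark \ref{exptransassubdiv} to this sub-slab. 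Everything else is a verbatim reuse of Proposition \ref{annoyingconeestimates} and of the argument in the preceding Proposition.
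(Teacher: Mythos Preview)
Your proposal is correct and follows exactly the route the paper has in mind: the Corollary is stated in the paper without proof, as an immediate consequence of iterating Proposition~\ref{annoyingconeestimates} over the slab $X\times[v_i,\infty)$, using Remark~\ref{exptransassubdiv} to identify $t^{j}$ there with $Sd^k\,t^{j+k}$. Your careful bookkeeping with $v_i=\max\{1,v_{j+k}\}$ (splitting into the cases $j+k\geqslant 1$ and $j+k\leqslant 0$) is precisely the check needed to ensure Proposition~\ref{annoyingconeestimates} applies at every stage, and is the only nontrivial content beyond what already appears in the proof of the preceding Proposition.
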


\begin{prop}\label{exptransscalesthings}
For all $z_1$, $z_2\in X\times [1,\infty)$, \[ d_{O(X_+)}(t^{-1}(z_1),t^{-1}(z_2)) \leqslant \dfrac{n+1}{n+2}d_{O(X_+)}(z_1,z_2).\]
\end{prop}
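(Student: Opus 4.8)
The plan is to unwind the definition of the exponential translation $t^{-1}$ over the region $X\times[1,\infty)$ and then read the estimate off the explicit formula for the metric on $O(X_+)$.

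First I would record precisely what $t^{-1}$ does to a point $(x,s)$ with $s\geq 1$. The map $t^{-1}$ is the identity on the $X$-coordinate and, in the $\R$-coordinate, carries each block $X\times[v_i,v_{i+1}]$ affinely onto $X\times[v_{i-1},v_i]$; so as a function of the $\R$-coordinate it is piecewise affine with $t^{-1}(v_i)=v_{i-1}$ and with slope $(v_i-v_{i-1})/(v_{i+1}-v_i)$ on $[v_i,v_{i+1}]$. By the scaling identity \eqref{scalefactor} this slope equals $\tfrac{n+1}{n+2}$ on every block with $i\geq 1$, so over $[1,\infty)=[v_1,\infty)$ the affine pieces assemble into a single affine function of constant slope $\tfrac{n+1}{n+2}$ taking the value $v_0=0$ at $v_1=1$. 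Hence, writing $z_k=(x_k,s_k)$,
\[ t^{-1}(z_k)=\Bigl(x_k,\ \tfrac{n+1}{n+2}(s_k-1)\Bigr),\qquad 0\ \leq\ \tfrac{n+1}{n+2}(s_k-1)\ \leq\ \tfrac{n+1}{n+2}\,s_k. \]

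Then I would substitute this into the metric formula $d_{O(X_+)}((m,t),(m',s))=\max\{\min\{t,s\},0\}\,d_X(m,m')+|t-s|$. Since every height involved is $\geq 0$, the truncation $\max\{\cdot,0\}$ plays no role, and, taking without loss of generality $s_1\leq s_2$ (so that the two translated heights occur in the same order),
\begin{align*}
 d_{O(X_+)}\bigl(t^{-1}(z_1),t^{-1}(z_2)\bigr)
  &= \tfrac{n+1}{n+2}(s_1-1)\,d_X(x_1,x_2) + \tfrac{n+1}{n+2}\,|s_1-s_2| \\
  &\leq \tfrac{n+1}{n+2}\bigl(s_1\,d_X(x_1,x_2) + |s_1-s_2|\bigr)
   = \tfrac{n+1}{n+2}\,d_{O(X_+)}(z_1,z_2),
\end{align*}
which is the inequality we want.

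I do not expect a genuine obstacle: essentially all the content is in the first step, namely checking that the block-by-block affine description of $t^{-1}$ glues together to the single clean formula $s\mapsto\tfrac{n+1}{n+2}(s-1)$ on $[1,\infty)$, which is exactly what the exponential spacing of the $v_i$ recorded in \eqref{scalefactor} was arranged to give. The only mild care needed is bookkeeping at the boundary of the region: a point of height exactly $1$ is carried to the cone point (height $0$), where the $d_X$-term disappears — consistently with the factor $s_1-1$ vanishing — and if $x_1$ and $x_2$ lie in different path components of $X$ the right-hand side is infinite, so there is nothing to prove. Neither case causes any difficulty.
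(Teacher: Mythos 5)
Your proof is correct and follows essentially the same route as the paper: substitute into the explicit formula for $d_{O(X_+)}$ and use the exponential spacing of the $v_i$ to compare term by term. The only difference is packaging — the paper works blockwise with coordinates $v_i+s_1\left(\tfrac{n+2}{n+1}\right)^i$ and the identities $v_{j-1}-v_{i-1}=\tfrac{n+1}{n+2}(v_j-v_i)$, $v_{i-1}\leqslant\tfrac{n+1}{n+2}v_i$, whereas you first observe that the pieces glue to the single affine formula $s\mapsto\tfrac{n+1}{n+2}(s-1)$ on $[1,\infty)$, which makes the same estimate immediate.
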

\begin{proof}
Let $z_1\in X\times [v_i,v_{i+1}]$ and $z_2\in X\times [v_j,v_{j+1}]$. Then
\begin{align}
 z_1 &= \left(x_1,v_i + s_1\left(\dfrac{n+2}{n+1}\right)^i \right), \notag \\
 z_2 &= \left(x_2,v_j + s_2\left(\dfrac{n+2}{n+1}\right)^j \right), \notag
\end{align}
for some $x_1$, $x_2\in X$, $s_1$, $s_2\in [0,1]$. Without loss of generality let's assume \[v_i + s_1\left(\dfrac{n+2}{n+1}\right)^i \leqslant v_j + s_2\left(\dfrac{n+2}{n+1}\right)^j\] so certainly we have $1\leqslant i\leqslant j$. Exponentially translating we get 
\begin{align}
 t^{-1}(z_1) &= \left(x_1,v_{i-1} + s_1\left(\dfrac{n+2}{n+1}\right)^{i-1} \right), \notag \\
 t^{-1}(z_2) &= \left(x_2,v_{j-1} + s_2\left(\dfrac{n+2}{n+1}\right)^{j-1} \right), \notag
\end{align}
From $(\ref{needlatertechnical})$ we deduce that $v_{i-1} \leqslant \dfrac{n+1}{n+2}v_i$. Hence
\begin{align}
 d_{O(X_+)}(t^{-1}(z_1),t^{-1}(z_2)) &= \left( v_{i-1} + s_1\left(\dfrac{n+2}{n+1}\right)^{i-1} \right)d_X(x_1,x_2) + v_{j-1}-v_{i-1} \notag \\ 
 &\quad + s_2\left(\dfrac{n+2}{n+1}\right)^{j-1} - s_1\left(\dfrac{n+2}{n+1}\right)^{i-1} \notag \\
 &\leqslant \dfrac{n+1}{n+2} \left(\left( v_{i} + s_1\left(\dfrac{n+2}{n+1}\right)^{i} \right)d_X(x_1,x_2) + v_{j}-v_{i}\right. \notag \\
 &\quad \left. + s_2\left(\dfrac{n+2}{n+1}\right)^{j} - s_1\left(\dfrac{n+2}{n+1}\right)^{i} \right) \notag \\
 &= \dfrac{n+1}{n+2}d_{O(X_+)}(z_1,z_2),\notag 
\end{align}
where we have used $(\ref{scalefactor})$ to deduce that
\[v_{j-1}-v_{i-1} = \dfrac{n+1}{n+2}(v_j-v_i).\]
\end{proof}

\begin{defn}
For all $k\in \Z$, define a PL isomorphism $t^k:X\times\R \to X\times\R$, called \textit{exponential translation by $k$}, by 
\[\begin{array}{rcl} t^k:X\times\R &\to& X\times\R \\ s(x,v_i)+(1-s)(x,v_{i+1}) &\mapsto& s(x,v_{i+k})+(1-s)(x,v_{i+1+k}) \end{array}\] for all $s\in[0,1]$ and $i\in\Z$. Note that $t^k$ maps the triangulation $t^j(X\times\R)$ to $t^{j+k}(X\times\R)$ for all $j,k\in \Z$.
\end{defn}

By Remark \ref{exptransassubdiv} the exponential translation maps $\{t^k\}_{k\in\Z}$ can be thought of as subdivisions for $k<0$ and assemblies for $k>0$. They induce isomorphisms of categories:
\[(t^k)_*: \left\{ \begin{array}{c} \A^*(t^j(X\times\R)) \to \A^*(t^{j+k}(X\times\R)) \\ \A_*(t^j(X\times\R)) \to \A_*(t^{j+k}(X\times\R)) \\ \mathbb{G}_{t^j(X\times\R)}(\A) \to \mathbb{G}_{t^{j+k}(X\times\R)}(\A) \end{array}\right. \]
for all $j,k\in \Z$.

\begin{defn}
For $k>0$, exponential translation by $-k$ thought of as an iterated stellar subdivision induces algebraic subdivision functors as in Definition \ref{disjointstellars} \[Sd_k:\BB(\A(t^j(X\times\R))) \to \BB(\A(t^{j-k}(X\times\R))).\] 
In general $Sd_k$ is very different to $(t^{-k})_*$ but if $(t^{-1})_*(C) \simeq Sd_1\,C\in \BB(\A(t^{j-1}(X\times\R)))$ then we say that the chain complex $C\in \BB(\A(t^j(X\times\R)))$ is \textit{exponential translation equivalent}.\footnote{Or equivalently if $t_*Sd_1\,C \simeq C$.}
\end{defn}

\begin{ex}
Let $C=\brc{\Delta^{lf}_*(t^j(X\times\R))}{\Delta^{-*}(t^j(X\times\R)).}$ Then $(t^{-1})_*C = Sd_1\, C$ by Proposition \ref{alggeneralisestop}, so $C$ is certainly exponential translation equivalent.
\end{ex}

The triangulations $t^j(X\times\R)$ and the exponential translation maps $t^k$ have been carefully constructed to have the following key properties:
\begin{rmk}\label{slide}
\begin{enumerate}[(i)]
 \item Measuring in $O(X_+)$ with the coning map $\mesh(t^j(X\times\R))<\infty$.
 \item If $f:C\to D\in \BB(\GG_{t^j(X\times\R)})$ has bound $B<\infty$ measured in $O(X_+)$, then \[(t^{-1})_*f:(t^{-1})_*C \to (t^{-1})_*D \in \BB(\GG_{t^{j-1}(X\times\R)})\] has bound $\dfrac{n+1}{n+2}B$ when restricted to $t^{j-1}(X\times[0,\infty))$ measured in $O(X_+)$.
\end{enumerate}
\end{rmk}
Thus exponential translation $(t^{-1})_*$ allows us to rescale the bound of a chain map in $\BB(\GG_{t^j(X\times\R)})$ until it is small enough to apply the squeezing theorem. Taking this approach with exponential translation equivalent chain complexes yields the following:

\begin{thm}[Splitting Theorem]\label{crucsplit}
Let $C,D$ be exponential translation equivalent chain complexes in $\BB(\A(t^1(X\times\R)))$. If there exists a chain equivalence $f:C\to D$ in $\mathbb{G}_{t^1(X\times\R)}(\A)$ with finite bound $0\leqslant B<\infty$ measured in $O(X_+)$, then for all $i\geqslant 1$ there exists a chain equivalence \[ f_i: C|_{t^1(X\times\{v_i\})} \to D|_{t^1(X\times\{v_i\})}\] in $\A(t^1(X\times\{v_i\})).$ Projecting to $X\times\{1\}$ this is a chain equivalence in $\A(Sd^{i-1}\,X)$ with bound tending to zero as $i\to \infty.$
\end{thm}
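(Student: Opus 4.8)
The plan is to use the two features of the open-cone triangulations isolated in Remark \ref{slide}: that exponential translation $(t^{-1})_*$ shrinks the bound of a $\GG$-chain equivalence over $X\times[0,\infty)$ by the factor $\dfrac{n+1}{n+2}$ measured in $O(X_+)$, and that, since $C$ and $D$ are exponential translation equivalent, it replaces them by their algebraic subdivisions $Sd_1 C$, $Sd_1 D$, which carry the local product structure of Theorem \ref{Thm:subdivassemblechainequiv}. Fix $i\geqslant 1$ and write $C_i:=C|_{t^1(X\times\{v_i\})}$, $D_i:=D|_{t^1(X\times\{v_i\})}$; since $t^1(X\times\{v_i\})=Sd^{i-1}X$ these lie in $\BB(\A(Sd^{i-1}X))$, and we must produce a chain equivalence $f_i:C_i\to D_i$ in $\A(Sd^{i-1}X)$.

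First I would translate down $m$ times. Iterating Remark \ref{slide}, $(t^{-m})_*f$ has bound at most $\left(\dfrac{n+1}{n+2}\right)^m B$ over $t^{1-m}(X\times[0,\infty))$ measured in $O(X_+)$, while iterating exponential translation equivalence (which propagates to the $Sd_k C$ by functoriality of $Sd$ and its compatibility with $(t^{-1})_*$) gives chain equivalences $(t^{-m})_*C\simeq Sd_m C$ and $(t^{-m})_*D\simeq Sd_m D$ in $\BB(\A(t^{1-m}(X\times\R)))$ whose bounds over $X\times[1,\infty)$ are at most $\mesh_{O(X_+)}(t^{1-m}(X\times[1,\infty)))$, which tends to $0$ as $m\to\infty$ by the cone mesh estimates established above. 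Composing, we obtain a chain equivalence $g_m:Sd_m C\to Sd_m D$ in $\GG_{t^{1-m}(X\times\R)}(\A)$ whose bound over $X\times[1,\infty)$ tends to $0$ with $m$.

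Next I would push this onto the slice and squeeze. The slice $X\times\{v_i\}$ is a full subcomplex of $t^{1-m}(X\times\R)$ with triangulation $Sd^m(Sd^{i-1}X)$, and — choosing the simplicial approximations in the exponential-translation tower so that on each slice they agree with the retracting maps $r_{0,\ldots,m-1}$ of Construction \ref{constructr} for the base $Sd^{i-1}X$ — the local product description of $Sd_m C$ (resp. $Sd_m D$) identifies its restriction to $t^{1-m}(X\times\{v_i\})$ with the algebraic subdivision $Sd_r(C_i)$ (resp. $Sd_r(D_i)$), $r=r_{0,\ldots,m-1}$. Since the exponential translation equivalences above lie in $\A(t^{1-m}(X\times\R))$ and restriction of triangular morphisms to a full subcomplex preserves chain maps and chain homotopies (condition $(\ref{five})$), converting $g_m$ to a chain equivalence of the slice complexes — the one delicate point, treated below — yields a chain equivalence $Sd_r(C_i)\to Sd_r(D_i)$ in $\GG_{Sd^m(Sd^{i-1}X)}(\A)$ whose bound measured in the standard metric of $Sd^{i-1}X$ at height $v_i$ is a constant (depending only on $i$ and $X$) times its $O(X_+)$-bound over $X\times[1,\infty)$, hence as small as we like once $m$ is large. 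As $\comesh(Sd^{i-1}X)=\comesh(X)$ and $\mesh(Sd^m(Sd^{i-1}X))\leqslant\left(\dfrac{\dim X}{\dim X+1}\right)^m\sqrt 2$ in that metric, the constants $\ep(Sd^{i-1}X)$ and $i(Sd^{i-1}X,\ep)$ of Proposition \ref{technicalprop} may be taken independent of $i$; taking $m$ above the threshold and large enough that the control is at most $\ep(Sd^{i-1}X)$, the Squeezing Theorem \ref{triangulise} with base $Sd^{i-1}X$ produces $f_i:C_i\to D_i$ in $\A(Sd^{i-1}X)=\A(t^1(X\times\{v_i\}))$. Since $f_i$, its chain homotopy inverse and all the chain homotopies lie in $\A(Sd^{i-1}X)$ and are therefore triangular, each has bound at most $\mesh_X(Sd^{i-1}X)\leqslant\left(\dfrac{\dim X}{\dim X+1}\right)^{i-1}\sqrt 2$ when projected to $X\times\{1\}$, which tends to $0$ as $i\to\infty$, giving the final assertion.

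The main obstacle is the conversion flagged above: $g_m$ has a good $O(X_+)$-bound but is only a $\GG$-chain equivalence, not triangular, so its naive restriction to a slice is not even a chain map. The resolution uses exponential translation equivalence in an essential way: it has already replaced the ambient complexes by the locally product complexes $Sd_m C$, $Sd_m D$, which near the slice $X\times\{v_i\}$ are, up to chain equivalence of small control, the tensor product of the slice complexes $Sd_r(C_i)$, $Sd_r(D_i)$ with the (subdivided) chain complex of a short interval; a $\GG$-chain equivalence of such product complexes of sufficiently small bound can be compressed onto the slice factor by a bounded, relative version of the argument proving Theorem \ref{triangulise}. Everything else is bookkeeping with the estimates already in place.
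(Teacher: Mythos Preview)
Your overall strategy---translate down to shrink the bound, convert via exponential translation equivalence to subdivisions of $C$ and $D$, then squeeze---is the same as the paper's.  The difference, and the place where your argument has a genuine gap, is exactly the point you flag in your final paragraph.

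You commit to applying the Squeezing Theorem with base $Sd^{i-1}X$, i.e.\ to the slice itself.  That forces you to first produce a $\GG_{Sd^m(Sd^{i-1}X)}(\A)$-chain equivalence $Sd_r(C_i)\to Sd_r(D_i)$ on the slice out of the ambient $\GG$-equivalence $g_m$.  As you say, naive restriction of a non-triangular map to a codimension-one slice is not even a chain map, and your proposed fix (``compress onto the slice factor by a bounded, relative version of the argument proving Theorem~\ref{triangulise}'') is not carried out.  It is not clear how to make this precise: the product description is only up to chain equivalence, and $g_m$ need not respect that product structure in any way.

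The paper sidesteps this entirely by squeezing one dimension up.  Instead of squeezing on the slice, it chooses an interval $J=[v_{j_-},v_{j_+}]\subset[1,\infty)$ containing a $B$-neighbourhood of $v_j$ and applies the Squeezing Theorem with base $t^1(X\times J)$.  The exponential translation and $\phi_C,\phi_D$ produce an $\ep$-controlled $\GG$-equivalence $\widetilde f:Sd_kC\to Sd_kD$ over $t^{1-k}(X\times\R)$; since $Sd_k=Sd^k$ on $t^1(X\times J)$, the Squeezing Theorem composition $r_D\widetilde f\,s_C$ (or its dual) is triangular on $t^1(X\times J)$.  Now the slice $t^1(X\times\{v_j\})$ is a subcomplex satisfying condition~(\ref{five}), so restricting the already-triangular squeezed equivalence to it is automatic and yields $f_j$ in $\A(t^1(X\times\{v_j\}))$.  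No ``compression onto a factor'' is needed.

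A smaller point: your assertion that $\comesh(Sd^{i-1}X)=\comesh(X)$ is false when both are measured in $X$ (the comesh shrinks under subdivision), so your claim that the squeezing constants are independent of $i$ needs more care; the paper avoids this issue too, since its squeezing constants come from the fixed complex $t^1(X\times J)$ and the remaining $f_j$ for small $j$ are obtained afterwards from exponential translation equivalence and Lemma~\ref{Lem:SubcontIsCont}.
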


\begin{proof}
If $B=0$, just take $f_i=f|: C|_{t^1(X\times\{v_i\})} \to D|_{t^1(X\times\{v_i\})}$. So suppose $B>0$. For all $v_j>B+1$ we can find an interval $J:=[v_{j_-},v_{j_+}]\subset [1,\infty)$ containing $(v_j-B, v_j+B)$. Since $\comesh(X)>0$, $\comesh(\mathrm{Prism}(X,Sd\,X))>0$ and $\comesh(t^1(X\times J))>0$ so we may apply the Squeezing Theorem to it. Let \[\ep = \ep(t^1(X\times J)),\quad i = i(t^1(X\times J))\] be as given by the Squeezing Theorem.

By Remark \ref{slide} there exists a $k\geqslant i$ such that $(t^{-k})_*f: (t^{-k})_*C\to (t^{-k})_*D$ is a chain equivalence in $\mathbb{G}_{t^{1-k}(X\times\R)}(\A)$ with bound less than $\frac{\ep}{3}$. Exponential translation equivalence provides chain equivalences
\begin{displaymath}
\xymatrix@R=1mm{
\phi_C: (t^{-k})_*C \ar[r]^-{\sim} & Sd_kC, \\
\phi_D: (t^{-k})_*D \ar[r]^-{\sim} & Sd_kD,
}
\end{displaymath}
in $\A(t^{1-k}(X\times\R))$ with control at most $\mesh(t^{1-k}(X\times\R))<\ep/3$, where $Sd_k$ is the subdivision functor obtained from viewing $t^{1-k}(X\times\R)$ as a subdivision of $t^1(X\times\R)$. The composition $\widetilde{f}:= \phi_D\circ (t^{-k})_*f\circ\phi_C^{-1}$ is a chain equivalence in $\mathbb{G}_{t^{1-k}(X\times\R)}(\A)$ with control $<\ep$.

Consider the composition \[f_i:=\brc{r_D\widetilde{f} s_C}{s^D\widetilde{f}r^C}:C\to D,\]where $s_C,$ $r_D$, $s^D$ and $r^C$ are as in Corollary \ref{rscd} but with respect to the subdivision functor $Sd_k$. By the proof of the Squeezing theorem and the fact that $Sd_k = Sd^k$ on $\BB(\A(t^1(X\times J)))$, the restriction to $t^1(X\times\{v_i\})$ of $f_i$, it's chain inverse and the chain homotopies are all morphisms of $\A(t^1(X\times\{v_i\}))$ so we get \[f_i:C|_{t^1(X\times\{v_i\})}\to D|_{t^1(X\times\{v_i\})}\] a chain equivalence in $\BB(\A(t^1(X\times\{v_i\})))$. Exponential translation equivalence of $C$ and $D$ plus Lemma \ref{Lem:SubcontIsCont} give the desired $f_j$ for all $j<i$.
\end{proof}

\begin{defn}\label{algcrosswithR}
Define a functor \[\textit{``}-\otimes\Z\textit{''}: \A(X) \to \A(t^1(X\times\{v_i\}_{i\in\Z}))\subset\A(t^1(X\times\R))\] by sending an object $M$ of $\A(X)$ to the object of $\A(t^1(X\times\{v_i\}_{i\in\Z}))$ that is $Sd^j\,M$ on $X\times\{v_i\}$ for $j=\max\{ 0,i-1\}$ and by sending a morphism $f:M\to N$ of $\A(X)$ to the morphism of $\A(t^1(X\times\{v_i\}_{i\in\Z}))$ that is $Sd_{r_{0,\ldots,j-1}}\,f:Sd^j\, M \to Sd^j\, N$ on $X\times\{v_i\}$ again for $j=\max\{0,i-1\}$. As before this also defines a functor \[\textit{``}-\otimes\Z\textit{''}: \BB(\A(X)) \to \BB(\A(t^1(X\times\{v_i\}_{i\in\Z}))).\] 
\qed\end{defn}

\begin{ex}\label{Gexptrans}
For $C$ a chain complex in $\BB(\A(X))$, we have that $\textit{``}C\otimes\Z\textit{''}$ is exponential translation equivalent. This is almost a tautology from the fact that by definition \[t^{-i}(t^j(X\times\R)) = Sd_i(t^j(X\times\R))\] and $\textit{``}C\otimes\Z\textit{''}$ is defined to be $Sd_{r_{0,\ldots,i-1}}\, C$ on $t^1(X\times\{v_i\})$ where $j=\max\{0,i-1\}$.
\end{ex}

\begin{thm}\label{alganal}
Let $X$ be a finite-dimensional locally finite simplicial complex and let $C\in \BB(\A(X))$. Then the following are equivalent:
\begin{enumerate}
 \item $C(\sigma)\simeq 0 \in \A$ for all $\sigma \in X$,
 \item $C\simeq 0 \in \A(X)$,
 \item $\textit{``}C\otimes\Z\textit{''}\simeq 0\in\mathbb{G}_{t^1(X\times\R)}(\A)$ with finite bound measured in $O(X_+)$.
\end{enumerate}
\end{thm}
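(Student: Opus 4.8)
The plan is to prove the cycle of implications $(1)\Rightarrow(2)\Rightarrow(3)\Rightarrow(1)$, noting that $(1)\Leftrightarrow(2)$ is already Proposition \ref{chcont}, so the real content is $(2)\Rightarrow(3)$ and $(3)\Rightarrow(1)$.

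\textbf{$(2)\Rightarrow(3)$.} Suppose $C\simeq 0\in\A(X)$. By Lemma \ref{Lem:SubcontIsCont} applied repeatedly, $Sd^j\,C\simeq 0\in\A(Sd^j\,X)$ for every $j\geqslant 0$, with contractions that can be chosen compatibly via the subdivision functors. Since $\textit{``}C\otimes\Z\textit{''}$ restricts to $Sd^{\max\{0,i-1\}}\,C$ on each slice $X\times\{v_i\}$ and is built from the $Sd_r$-functors, the slicewise contractions assemble, together with the exponential-translation-equivalence structure of Example \ref{Gexptrans}, into a single chain contraction of $\textit{``}C\otimes\Z\textit{''}$ in $\GG_{t^1(X\times\R)}(\A)$. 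It remains to check the bound is finite measured in $O(X_+)$: on $X\times\{v_i\}$ the contraction lives in $\A(Sd^{i-1}\,X)$, hence has bound at most $\mesh_X(Sd^{i-1}\,X)\leqslant\big(\tfrac{\dim X}{\dim X+1}\big)^{i-1}\mesh_X(X)$ in $X$, so measured in $O(X_+)$ its bound over $X\times\{v_i\}$ is at most $v_{i+1}$ times this, which is bounded as $i\to\infty$ by the same geometric-series estimate used in the mesh computations of Section \ref{section:openconeandsplitting}; the finitely many lower blocks contribute a finite bound too. (One should also make sure the contraction does not spread in the $\R$-direction more than boundedly, which follows from building it slicewise.)

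\textbf{$(3)\Rightarrow(1)$.} This is where the Splitting Theorem does the work. Assume $\textit{``}C\otimes\Z\textit{''}\simeq 0\in\GG_{t^1(X\times\R)}(\A)$ with finite bound $B<\infty$ measured in $O(X_+)$; equivalently the chain map $0\to\textit{``}C\otimes\Z\textit{''}$ is a chain equivalence with finite bound, or we may phrase it as $f:\textit{``}C\otimes\Z\textit{''}\to 0$. Since $\textit{``}C\otimes\Z\textit{''}$ is exponential translation equivalent (Example \ref{Gexptrans}) and so is the zero complex, Theorem \ref{crucsplit} applies: for all $i\geqslant 1$ we obtain a chain equivalence $f_i:\textit{``}C\otimes\Z\textit{''}|_{t^1(X\times\{v_i\})}\to 0$ in $\A(t^1(X\times\{v_i\}))$, i.e.\ $\textit{``}C\otimes\Z\textit{''}|_{t^1(X\times\{v_i\})}\simeq 0$ in that category. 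But on the slice $X\times\{v_i\}$ the complex $\textit{``}C\otimes\Z\textit{''}$ is precisely $Sd^{i-1}\,C$ (after projecting to $X\times\{1\}$), so $Sd^{i-1}\,C\simeq 0\in\A(Sd^{i-1}\,X)$ for some (indeed every sufficiently large) $i$. Applying Lemma \ref{Lem:SubcontIsCont} in the reverse direction $i-1$ times gives $C\simeq 0\in\A(X)$, which by Proposition \ref{chcont} yields $C(\sigma)\simeq 0$ for all $\sigma\in X$, establishing $(1)$.

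\textbf{Main obstacle.} The genuinely delicate point is $(2)\Rightarrow(3)$: one must produce a contraction of the infinite complex $\textit{``}C\otimes\Z\textit{''}$ with \emph{globally finite} bound in $O(X_+)$. The slicewise contractions shrink in $X$-bound like $\big(\tfrac{\dim X}{\dim X+1}\big)^{i}$ but the cone metric scales up by roughly $\big(\tfrac{n+2}{n+1}\big)^{i}$, so one needs the product $\big(\tfrac{\dim X}{\dim X+1}\big)^{i}\big(\tfrac{n+2}{n+1}\big)^{i}$ to stay bounded — with $n=\dim X+1$ this is $\big(\tfrac{n-1}{n}\cdot\tfrac{n+2}{n+1}\big)^{i}=\big(\tfrac{n^2+n-2}{n^2+n}\big)^{i}<1$, so it does decay, but this numerology must be checked carefully and it is exactly why the triangulations $t^j(X\times\R)$ were set up with the scale factor $\tfrac{n+2}{n+1}$ rather than something larger. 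One must also verify that the contractions can be chosen not to propagate in the $\R$-direction beyond the fixed prism triangulations, which is immediate from constructing them one slice at a time and using the product structure $Sd_r$ supplies.
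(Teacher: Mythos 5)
Your proof is correct and takes essentially the same route as the paper: the paper also dispatches $(1)\Leftrightarrow(2)$ with Proposition \ref{chcont}, gets $(2)\Rightarrow(3)$ from slicewise contractibility via Lemma \ref{Lem:SubcontIsCont} together with the finiteness of $\mesh_{O(X_+)}(t^1(X\times\R))$ (your explicit scale-factor estimate is a more laborious but valid way of seeing the finite bound, which already follows from the contraction being a morphism of $\A(t^1(X\times\R))$), and proves the converse by restricting the contraction to a single far-out slice $X\times\{v_j\}$ and applying the Squeezing Theorem. The only cosmetic difference is that you invoke Theorem \ref{crucsplit} as a black box for this last step, whereas the paper inlines the same restrict-project-squeeze argument directly (choosing $v_j-v_{j-1}>B$ so the restriction to the slice is a genuine contraction).
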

\begin{proof}
$(1)\Leftrightarrow (2)$: Proposition \ref{chcont}.

\noindent $(2)\Rightarrow(3)$: Immediate from the definitions and Lemma \ref{Lem:SubcontIsCont}.

\noindent $(3)\Rightarrow (2)$: Let $B$ be the bound of the chain contraction $\textit{``}C\otimes\Z\textit{''} \simeq 0$ in $\mathbb{G}_{t^1(X\times\R)}(\A)$ when measured in $O(X_+)$. Choose $\ep<\comesh(X)$ and let $i=i(X,\ep)$ be as in the Squeezing Theorem. We may choose $j$ large enough so that $v_j-v_{j-1}>B$, $B/v_j<\ep$ and $j>i$. The restriction of $\textit{``}C\otimes\Z\textit{''} \simeq 0$ to $X\times \{v_j\}$ projects to a chain equivalence $Sd^j\,C\simeq 0$ in $\mathbb{G}_{X}(\A)$ with bound at most $B/v_j<\ep$. Applying the Squeezing Theorem we get a chain equivalence $C\simeq 0$ in $\BB(\A(X))$.
\end{proof}

\begin{rmk}
Rather than defining a functor $\textit{``}-\otimes\Z\textit{''}$ one could instead define $\textit{``}-\otimes\R\textit{''}$. The way to do this is first to define
\[ \textit{``}-\otimes I\textit{''}: \BB(\A(X)) \to \BB(\A(\mathrm{Prism}(X,X)))\] which is used to define $\textit{``}-\otimes\R\textit{''}$ on lower blocks. Then compose this with a subdivision functor
\[\BB(\A(\mathrm{Prism}(X,X))) \to \BB(\A(\mathrm{Prism}(X,Sd\,X)))\] for the block $t^1(X\times[0,1])$ and then with further algebraic subdivision functors for higher blocks. Theorem \ref{alganal} still holds with $\textit{``}-\otimes\R\textit{''}$. The proof is still a relatively straight forward application of exponential translation invariance and the Splitting Theorem.
\end{rmk}

\section{Poincar\'{e} duality}\label{section:PD}
In this section let $R$ be a commutative ring and $\A=\F(R)$. Since the simplicial $\brc{\mathrm{chain}}{\mathrm{cochain}}$ complex $\brc{\Delta^{lf}_*(X)}{\Delta^{n-*}(X)}$ is naturally a chain complex in $\brc{\BB(\A^*(X))}{\BB(\A_*(X))}$ the results of this paper have applications to Poincar\'{e} duality.

\begin{defn}
 \begin{enumerate}[(i)]
  \item An \textit{$n$-dimensional $R$-homology Poincar\'{e} complex} is an f.d. l.f. simplicial complex together with a fundamental class $[X]\in \Delta^{lf}_n(X;R)$ such that the cap products \[ [X]\cap-: \Delta^{n-*}(X;R) \to \Delta^{lf}_*(Sd\,X;R)\] are chain equivalences.
  \item An $n$-dimensional $R$-homology Poincar\'{e} complex $X$ is called an \textit{$\ep$-controlled Poincar\'{e} complex} if there exists an $i\in \N$ such that viewing $\Delta^{n-*}(Sd^i\,X)$ and $\ttt \Delta^{lf}_*(Sd^{i+1}\, X)$ as chain complexes in $\BB(\A_*(Sd^i\,X))$ the assembled cap product maps
\[ [X]\cap-: \Delta^{n-*}(Sd^i\,X;R) \to \ttt \Delta^{lf}_*(Sd^{i+1}\,X;R)\] are chain equivalences in $\GG_{Sd^i\,X}(\A)$ with control at most $\ep$.
  \item An $n$-dimensional $R$-homology Poincar\'{e} complex is called a \textit{$\BB(\A_*(Sd^i\,X))$-controlled Poincar\'{e} complex} if the assembled cap product maps
\[ [X]\cap-: \Delta^{n-*}(Sd^i\,X;R) \to \ttt \Delta^{lf}_*(Sd^{i+1}\,X;R)\] are chain equivalences in $\BB(\A_*(Sd^i\,X))$.
 \item An $n$-dimensional $R$-homology Poincar\'{e} complex $X$ is an \textit{$n$-dimensional $R$-homology manifold} if for all $\sigma\in X$, 
\[\Delta_*(X,X\backslash\widehat{\sigma};R) \simeq \Sigma^n R.\]
 \item An f.d. l.f. simplicial complex $X$ is a combinatorial manifold if for all $\sigma\in X$, \[\link(\sigma,X)\cong S^{n-|\sigma|-1}.\]
 \item For an $n$-dimensional $R$-homology Poincar\'{e} complex $X$ we say that $X\times\R$ is a \textit{bounded $(n+1)$-dimensional Poincar\'{e} complex measured in $O(X_+)$} if there is a $B<\infty$ and a fundamental class $[t^1(X\times\R)]\in \Delta_{n+1}^{lf}(t^(X\times\R))$ such that the cap products
\[[t^1(X\times\R)]\cap -: \Delta^{n+1-*}(t^1(X\times\R)) \to \ttt\Delta^{lf}_*(Sd\,t^1(X\times\R)) \] are chain equivalences in $\GG_{t^1(X\times\R)}(\A)$ with bound at most $B$ measured in $O(X_+)$. Note here $Sd\, t^1(X\times\R)$ is the global barycentric subdivision of $t^1(X\times\R)$.
 \end{enumerate}
\end{defn}

\begin{rmk}
Note that, by Proposition \ref{chcont}, $X$ has $\BB(\A_*(Sd^i\,X))$-controlled Poincar\'{e} if and only if $(D(\sigma,X),\partial D(\sigma,X))$ is an $(n-|\sigma|)$-dimensional $R$-homology Poincar\'{e} pair.
\end{rmk}

This section is devoted to proving the following result

\begin{thm}\label{hommfldequivs}
Let $X$ be an $n$-dimensional $R$-homology Poincar\'{e} complex. Then the following are equivalent
\begin{enumerate}
 \item $X$ is an $n$-dimensional $R$-homology manifold.
 \item $X$ is an $\ep$-controlled Poincar\'{e} complex, for all $\ep>0$.
 \item $X$ is a $\BB(\A_*(Sd^i\,X))$-controlled Poincar\'{e} complex, for all $i\in\N$.
 \item $X$ is a $\BB(\A_*(X))$-controlled Poincar\'{e} complex.
 \item $X\times\R$ is a bounded Poincar\'{e} complex over $O(X_+)$.
\end{enumerate}
\end{thm}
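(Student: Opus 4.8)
The plan is to translate all five conditions into statements about a single chain complex, namely the algebraic mapping cone over $X$ of the assembled cap product,
$E := \C\bigl([X]\cap-\colon\Delta^{n-*}(X)\longrightarrow\ttt\Delta^{lf}_*(Sd\,X)\bigr)\in\BB(\A_*(X))$,
and then to read off the equivalences from Proposition \ref{chcont}, Theorem \ref{alganal} and the Squeezing Theorem \ref{triangulise}. By the standard fact that a chain map is a chain equivalence iff its mapping cone is contractible, condition $(4)$ says exactly that $E\simeq0\in\A_*(X)$, and by Proposition \ref{chcont} this holds iff $E(\sigma)\simeq0\in\A$ for every $\sigma\in X$. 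Since $E(\sigma)$ is the mapping cone of the cap product of the dual cell pair $(D(\sigma,X),\partial D(\sigma,X))$, the Remark above (that $X$ is a $\BB(\A_*(Sd^i\,X))$-controlled Poincar\'e complex iff every such pair is an $(n-|\sigma|)$-dimensional $R$-homology Poincar\'e pair) gives that $E(\sigma)\simeq0$ for all $\sigma$ iff every dual cell pair is an $R$-homology Poincar\'e pair. As $D(\sigma,X)=\widehat\sigma*\partial D(\sigma,X)$ is contractible, this happens exactly when $\partial D(\sigma,X)\simeq_R S^{n-|\sigma|-1}$, which by the standard identification of local homology with the shifted reduced homology of the dual cell boundary is equivalent to $\Delta_*(X,X\setminus\widehat\sigma;R)\simeq\Sigma^nR$, i.e.\ to $(1)$. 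This is $(1)\Leftrightarrow(4)$. For $(1)\Leftrightarrow(3)$, note that the homology manifold condition only involves the local homology of the underlying polyhedron and is therefore insensitive to subdivision, so $X$ is an $R$-homology manifold iff $Sd^i\,X$ is for every $i$; applying the same Remark to $Sd^i\,X$ identifies the latter with condition $(3)$, while $(3)\Rightarrow(4)$ is the case $i=0$.

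Next I would dispatch $(2)$. For $(1)\Rightarrow(2)$: by $(1)\Leftrightarrow(3)$ the assembled cap product $\Delta^{n-*}(Sd^i\,X)\to\ttt\Delta^{lf}_*(Sd^{i+1}\,X)$ is a chain equivalence in $\BB(\A_*(Sd^i\,X))$; being triangular, it, its inverse and its homotopies all have bound at most $\mesh(Sd^i\,X)$ measured in $X$ by Remark \ref{makesmall}, and $\mesh(Sd^i\,X)\to0$, so for any $\ep>0$ a large enough $i$ witnesses $\ep$-controlled Poincar\'e duality. For $(2)\Rightarrow(4)$ I would apply the Squeezing Theorem to $E$. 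Let $\ep(X)$, $i(X,\ep)$ be as in Theorem \ref{triangulise}; by $(2)$ (after passing to a higher subdivision level, which only enlarges the bound by a controlled amount that can be absorbed) there is an $i\geq i(X,\ep)$ and an $\ep$-controlled chain equivalence $\Delta^{n-*}(Sd^i\,X)\to\ttt\Delta^{lf}_*(Sd^{i+1}\,X)$ in $\GG_{Sd^i\,X}(\A)$ with $\ep<\ep(X)$. By Lemma \ref{Sdcommuteswithcones} and Proposition \ref{alggeneralisestop}, $Sd_r\,E$ (with $r=r_{0,\ldots,i-1}$) is the mapping cone of $Sd_r([X]\cap-)\colon\Delta^{n-*}(Sd^i\,X)\to Sd_r\,\ttt\Delta^{lf}_*(Sd\,X)$; using that algebraic subdivision is compatible, up to a chain equivalence of control at most $\mesh(Sd^i\,X)$, with the contravariant dual cell assembly $\ttt$ (a link-and-join computation together with Proposition \ref{chcont}), this cone is boundedly chain equivalent to the mapping cone of the $\ep$-controlled cap product above, hence is $\ep'$-controlled contractible in $\GG_{Sd^i\,X}(\A)$ for an $\ep'$ still below $\ep(X)$. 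The Squeezing Theorem, applied with target complex $0$, then forces $E\simeq0\in\A_*(X)$, which is $(4)$.

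Finally $(4)\Leftrightarrow(5)$, which I would get from Theorem \ref{alganal} applied to $E$, in the $\textit{``}-\otimes\R\textit{''}$ variant noted in the remark after that theorem: $E\simeq0\in\A_*(X)$ iff $\textit{``}E\otimes\R\textit{''}$ is chain contractible in $\GG_{t^1(X\times\R)}(\A)$ with finite bound measured in $O(X_+)$. What remains is to identify $\textit{``}E\otimes\R\textit{''}$, up to a chain equivalence of finite bound over $O(X_+)$, with $\C\bigl([t^1(X\times\R)]\cap-\bigr)$, the mapping cone of the cap product appearing in $(5)$. For this I would use the Eilenberg--Zilber product decomposition of $\Delta^{n+1-*}(t^1(X\times\R))$ and $\ttt\Delta^{lf}_*(Sd\,t^1(X\times\R))$ relative to the prism triangulation, the compatibility of cap products with this decomposition and with the product fundamental class $[t^1(X\times\R)]=[X]\otimes[\R]$, and Poincar\'e duality for $\R$ (so that $[\R]\cap-$ is a chain equivalence, with contractible cone); since related simplices always lie in a bounded number of adjacent prisms and $\mesh_{O(X_+)}(t^1(X\times\R))<\infty$, every chain equivalence and homotopy involved has finite bound over $O(X_+)$. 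This exhibits $\C\bigl([t^1(X\times\R)]\cap-\bigr)$ as boundedly equivalent over $O(X_+)$ to $E$ tensored with the (subdivided) simplicial chains of $\R$, i.e.\ to $\textit{``}E\otimes\R\textit{''}$, giving $(4)\Leftrightarrow(5)$ and, with the previous paragraphs, the full cycle of implications.

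The main obstacle is this last identification: pushing the product formula for cap products and fundamental classes through the iterated algebraic subdivisions that define $\textit{``}-\otimes\R\textit{''}$, while checking that every intermediate chain equivalence and homotopy stays boundedly controlled in the expanding metric of $O(X_+)$. The compatibility of algebraic subdivision with the dual cell assembly $\ttt$ used in the $(2)\Rightarrow(4)$ step is of a similar flavour but more routine; it, together with the bookkeeping needed to raise the subdivision level in condition $(2)$ so that the Squeezing Theorem applies, is the other place where care is required.
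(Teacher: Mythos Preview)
Your overall strategy---package everything into the single mapping cone $E=\C\bigl([X]\cap-\bigr)\in\BB(\A_*(X))$ and then invoke Proposition~\ref{chcont}, the Squeezing Theorem, and Theorem~\ref{alganal}---is conceptually cleaner than the paper's route, and your treatment of $(1)\Leftrightarrow(4)$ and $(1)\Leftrightarrow(3)$ is essentially the same as (or simpler than) the paper's: the paper proves $(4)\Rightarrow(3)$ via a direct PL isomorphism of dual cells under subdivision (Theorem~\ref{subdividingtriangularPD}), whereas you pass through $(1)$ and the subdivision-invariance of being a homology manifold, which is perfectly valid.

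The genuine divergence is in $(2)\Rightarrow(4)$ and $(4)\Leftrightarrow(5)$. The paper does \emph{not} reduce these to the generic Squeezing Theorem~\ref{triangulise} or Theorem~\ref{alganal} applied to $E$; instead it proves bespoke Poincar\'e duality versions (Theorem~\ref{PDsqueezing} and the Poincar\'e duality Splitting Theorem). The reason is exactly the obstacle you flag: the cap product mixes $\Delta^{n-*}(X)\in\BB(\A_*(X))$ with $\ttt\Delta^{lf}_*(Sd\,X)$, where $\ttt$ is the contravariant dual cell assembly from $\A^*(Sd\,X)$, and algebraic subdivision $Sd_r$ does not commute with $\ttt$ on the nose. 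To make your reduction work you need $Sd_r\bigl(\ttt\Delta^{lf}_*(Sd\,X)\bigr)\simeq\ttt_i\Delta^{lf}_*(Sd^{i+1}\,X)$ in $\BB(\A_*(Sd^i\,X))$ compatibly with the cap products, and similarly an Eilenberg--Zilber identification of $\textit{``}E\otimes\R\textit{''}$ with $\C\bigl([t^1(X\times\R)]\cap-\bigr)$ with bounded control over $O(X_+)$. You call the first of these ``more routine'', but in the paper this compatibility is precisely what the long proof of Theorem~\ref{PDsqueezing} establishes, by playing two carefully chosen simplicial approximations $r_X$ and $r_{Sd\,X}$ (pushing towards vertices of $X$ and of $Sd\,X$ respectively) off against each other and tracking supports through four claims. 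Your link-and-join sketch via Proposition~\ref{keypropforsubdivofPD} is the right ingredient for the local homotopy type, but it does not by itself deliver the controlled chain-level identification intertwining the two cap products; that is real work, not bookkeeping.

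In short: your outline is correct and arguably more elegant, but the ``obstacles'' you identify are not side issues---they are where the substance lives, and the paper's dedicated PD Squeezing and Splitting theorems are its way of discharging them rather than a detour you have avoided.
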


\begin{proof}
$(1)\Leftrightarrow (4)$: Proposition \ref{characterisehommflds}.

$(3)\Rightarrow (4)$: Trivial.

$(4)\Rightarrow (3)$: Theorem \ref{subdividingtriangularPD}.

$(3)\Rightarrow (2)$: A consequence of the fact that $\mesh(Sd^i\,X)\to 0$ as $i\to\infty$.

$(2)\Rightarrow (4)$: The Poincar\'{e} duality Squeezing Theorem.

$(5)\Rightarrow (3)$: The Poincar\'{e} duality Splitting Theorem.

$(3)\Rightarrow (5)$: Apply $\textit{``}-\otimes\R\textit{''}$ to the chain equivalence \[\Delta^{n-*}(X)\simeq \ttt \Delta^{lf}_*(Sd\, X)\in \BB(\A_*(X)).\]
\end{proof}

The following is due to Ranicki.

\begin{prop}\label{characterisehommflds}
Let $X$ be an $n$-dimensional $R$-homology Poincar\'{e} complex. Then $X$ is a an $n$-dimensional $R$-homology manifold if and only if $X$ has $\BB(\A_*(X))$-controlled Poincar\'{e} duality.
\end{prop}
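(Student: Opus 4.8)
The plan is to unwind both conditions into statements about the local pieces $C(\sigma)$ of the relevant chain complexes using Proposition \ref{chcont}, and then identify these local pieces with the local homology groups appearing in the definition of an $R$-homology manifold. The key observation, which is essentially built into the setup, is that the assembled cap product map $[X]\cap -:\Delta^{n-*}(X)\to\ttt\Delta^{lf}_*(Sd\,X)$ is a morphism in $\BB(\A_*(X))$ whose source over a simplex $\sigma$ is $\Delta^{-*}(\sigma,\partial\sigma)=\Sigma^{-|\sigma|}R$ and whose target over $\sigma$ is $(\ttt\Delta^{lf}_*(Sd\,X))(\sigma)=\Delta^{lf}_*(\mathring D(\sigma,X))=\Delta_*(D(\sigma,X),\partial D(\sigma,X))$.

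\textbf{Step 1.} By Proposition \ref{chcont}(ii), $X$ has $\BB(\A_*(X))$-controlled Poincar\'e duality if and only if for every $\sigma\in X$ the diagonal component $([X]\cap-)_{\sigma,\sigma}$ is a chain equivalence in $\A=\F(R)$. So it suffices to show that this diagonal component is a chain equivalence for all $\sigma$ precisely when $\Delta_*(X,X\setminus\widehat\sigma;R)\simeq\Sigma^n R$ for all $\sigma$.

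\textbf{Step 2.} Identify the diagonal component. The map $([X]\cap-)_{\sigma,\sigma}$ is, up to the sign conventions and suspension shifts coming from the definitions of $\A_*(X)$ and of $\ttt$, the cap product with the fundamental class restricted to the dual cell, i.e.\ a map $\Sigma^{-|\sigma|}R\to\Delta_*(D(\sigma,X),\partial D(\sigma,X))$. One recognizes this as (a representative of) the relative cap product $[X]\cap-:\Delta^{n-*}(\sigma,\partial\sigma)\to\Delta_*(D(\sigma,X),\partial D(\sigma,X))$. By excision and the standard fact that the open dual cell $\mathring D(\sigma,X)$ is a neighbourhood of $\widehat\sigma$ deformation retracting to it, we have $H_*(D(\sigma,X),\partial D(\sigma,X);R)\cong H_*(X,X\setminus\widehat\sigma;R)$. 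Thus $([X]\cap-)_{\sigma,\sigma}$ is a chain equivalence in $\F(R)$ if and only if $\Delta_*(D(\sigma,X),\partial D(\sigma,X);R)\simeq\Sigma^{-|\sigma|+n}R$ if and only if $\Delta_*(X,X\setminus\widehat\sigma;R)\simeq\Sigma^{n-|\sigma|}R$. Since $\sigma$ ranges over all simplices including vertices, reindexing this is precisely the condition $\Delta_*(X,X\setminus\widehat\sigma;R)\simeq\Sigma^n R$ in the sense of the homology manifold definition (where the suspension degree is read relative to $|\sigma|$).

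\textbf{Step 3.} Conclude: combining Steps 1 and 2, $X$ has $\BB(\A_*(X))$-controlled Poincar\'e duality $\iff$ every $([X]\cap-)_{\sigma,\sigma}$ is a chain equivalence $\iff$ every local homology group $H_*(X,X\setminus\widehat\sigma;R)$ is that of a sphere of the correct dimension $\iff$ $X$ is an $n$-dimensional $R$-homology manifold.

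\textbf{The main obstacle} I expect is bookkeeping rather than conceptual: one must check carefully that the diagonal component of the \emph{assembled} cap product really is the cap product on the dual cell pair, keeping track of the suspension shifts $\Sigma^{\pm|\sigma|}$ that enter through the definitions of $\Delta^{n-*}(X)\in\BB(\A_*(X))$, of $\ttt$, and of the contravariant assembly over open dual cells, together with the associated signs. One also needs the classical identification $H_*(D(\sigma,X),\partial D(\sigma,X))\cong H_*(X,X\setminus\widehat\sigma)$, which is standard (see e.g.\ the treatment of dual cells in \cite{bluebk}), and the fact that $X$ being an $R$-homology Poincar\'e complex guarantees the global cap product is a chain equivalence so that the only issue is whether it can be made triangular, i.e.\ whether the off-diagonal terms obstruct local invertibility — which by Proposition \ref{chcont} they do not once the diagonal terms are equivalences. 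Since Ranicki's argument is already in the literature, I would keep the writeup brief and cite \cite{bluebk} for the dual cell computation.
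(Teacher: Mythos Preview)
Your overall strategy matches the paper's: reduce via Proposition~\ref{chcont} to the diagonal pieces, then identify these with the local homology condition. However, Step~2 contains a genuine error. The open dual cell $\mathring D(\sigma,X)$ is \emph{not} an open neighbourhood of $\widehat\sigma$ in $X$ unless $|\sigma|=0$: it is a subcomplex of dimension $n-|\sigma|$ (for instance, when $\sigma$ is top-dimensional $D(\sigma,X)=\{\widehat\sigma\}$ is a single point), so your excision argument does not apply as stated. The correct identification carries a degree shift,
\[
\Delta_*(X,X\setminus\widehat\sigma;R)\;\simeq\;\Sigma^{|\sigma|}\,\Delta_*(D(\sigma,X),\partial D(\sigma,X);R),
\]
which the paper obtains via the join decomposition of the star of $\widehat\sigma$ in $Sd\,X$, using $\partial D(\sigma,X)\cong Sd\,\link(\sigma,X)$ and $\partial\sigma\cong S^{|\sigma|-1}$. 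With this shift in place the chain of equivalences reads
\[
\Delta_*(D(\sigma,X),\partial D(\sigma,X))\simeq\Sigma^{n-|\sigma|}R\quad\Longleftrightarrow\quad \Delta_*(X,X\setminus\widehat\sigma)\simeq\Sigma^n R,
\]
which is exactly the homology manifold condition with the suspension degree fixed at $n$ and no ``reindexing relative to $|\sigma|$'' needed. Your parenthetical at the end of Step~2 suggests you sensed the degrees were not lining up; the missing $\Sigma^{|\sigma|}$ is precisely the discrepancy, and without it the two conditions you are trying to match are off by $|\sigma|$ in every case with $|\sigma|>0$.
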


\begin{proof}
Observe that 
\begin{align}
 \Delta_*(X,X\backslash \widehat{\sigma};R) &\cong \Delta_*(\sigma * \link(\sigma,X), \sigma* \link(\sigma,X) \backslash \widehat{\sigma};R) \notag \\
 &\cong \Delta_*(\sigma * \partial D(\sigma,X), \partial\sigma * \link(\sigma,X);R) \notag \\
 &\cong \Delta_*(S^{|\sigma|-1} * D(\sigma,X), S^{|\sigma|-1} * \partial D(\sigma,X);R) \notag \\
 &\cong \Sigma^{|\sigma|}\Delta_*(D(\sigma,X),\partial D(\sigma,X);R), \notag
\end{align}
where we have used the fact that there is a simplicial isomorphism \[\partial D(\sigma,X) \cong Sd\, \link(\sigma, X).\]
Hence, $X$ is an $n$-dimensional $R$-homology manifold if and only if \[\Sigma^n\,R \cong \Sigma^{|\sigma|}\Delta_*(D(\sigma,X),\partial D(\sigma,X);R)\] if and only if \[\Delta_*(D(\sigma,X),\partial D(\sigma,X);R)\simeq \Sigma^{n-|\sigma|}R = \Delta^{n-*}(\sigma,\partial\sigma),\]i.e. if and only if $X$ has $\BB(\A_*(X))$-controlled Poincar\'{e} duality.
\end{proof}

\begin{rmk}
Since $(\sigma,\partial\sigma)$ is an $|\sigma|$-dimensional combinatorial manifold with boundary setting $\sigma = \tau_k$ we necessarily have \[\link(\widehat{\tau}_0\ldots\widehat{\tau}_k,Sd\, \tau_k)\cong S^{|\tau_k|-k-1}.\]
\end{rmk}

\begin{cor}
$\mathring{D}(\widehat{\tau}_0\ldots\widehat{\tau}_k,Sd\, \tau_k) \cong \mathring{\Delta}^{|\tau_k|-k}$
\end{cor}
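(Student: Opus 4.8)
The plan is to recognise the closed dual cell $D(\widehat{\tau}_0\ldots\widehat{\tau}_k, Sd\,\tau_k)$ as a simplicial cone whose base is a PL sphere, using the Remark immediately above, and then to pass to interiors.

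First I would record the general fact that, for any simplex $\sigma$ of any simplicial complex $X$, there is a simplicial identification $D(\sigma,X) = \widehat{\sigma}*\partial D(\sigma,X)$. This is immediate from the definitions: a simplex $\widehat{\sigma}_0\ldots\widehat{\sigma}_m$ of $Sd\,X$ lies in $D(\sigma,X)$ exactly when $\sigma\leqslant\sigma_0<\ldots<\sigma_m$; the chains with $\sigma_0=\sigma$ give precisely the simplices of the form $\widehat{\sigma}*\delta$ with $\delta\in\partial D(\sigma,X)$, while those with $\sigma<\sigma_0$ give precisely the simplices of $\partial D(\sigma,X)$ itself. Since $\mathring{D}(\sigma,X) = D(\sigma,X)\setminus\partial D(\sigma,X)$, it follows that $\mathring{D}(\sigma,X)$ is the open cone on $\partial D(\sigma,X)$.

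Next I would invoke the simplicial isomorphism $\partial D(\sigma,X)\cong Sd\,\link(\sigma,X)$ already used in the proof of Proposition \ref{characterisehommflds}, now with $X=Sd\,\tau_k$ and $\sigma=\widehat{\tau}_0\ldots\widehat{\tau}_k$. Combined with the Remark above, which gives $\link(\widehat{\tau}_0\ldots\widehat{\tau}_k,Sd\,\tau_k)\cong S^{|\tau_k|-k-1}$, and the fact that barycentric subdivision does not change the underlying polyhedron, this yields $\partial D(\widehat{\tau}_0\ldots\widehat{\tau}_k,Sd\,\tau_k)\cong S^{|\tau_k|-k-1}$. Coning off, $D(\widehat{\tau}_0\ldots\widehat{\tau}_k,Sd\,\tau_k)$ is PL homeomorphic to $\Delta^{|\tau_k|-k}$ with boundary the sphere $\partial D(\widehat{\tau}_0\ldots\widehat{\tau}_k,Sd\,\tau_k)$, so deleting the boundary gives $\mathring{D}(\widehat{\tau}_0\ldots\widehat{\tau}_k,Sd\,\tau_k)\cong\mathring{\Delta}^{|\tau_k|-k}$, as claimed.

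I do not anticipate a genuine obstacle here: the only point requiring care is that all the identifications are taken in the PL category, so that ``the cone on a PL $d$-sphere is a PL $(d+1)$-ball with that sphere as its boundary'' legitimately applies. This is the sense in which $\cong$ is used throughout the section, and the Remark above is itself a PL statement, so nothing further is needed.
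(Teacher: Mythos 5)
Your argument is essentially identical to the paper's proof: the paper also writes $\mathring{D}(\widehat{\tau}_0\ldots\widehat{\tau}_k,Sd\,\tau_k)$ as the interior of the cone $\widehat{\widehat{\tau}_0\ldots\widehat{\tau}_k}*\partial D(\widehat{\tau}_0\ldots\widehat{\tau}_k,Sd\,\tau_k)$, identifies the boundary with $Sd\,\link(\widehat{\tau}_0\ldots\widehat{\tau}_k,Sd\,\tau_k)\cong Sd\,S^{|\tau_k|-k-1}$ via the preceding Remark, and concludes that the interior of the cone on this PL sphere is $\mathring{\Delta}^{|\tau_k|-k}$. Your write-up just makes the cone decomposition of the closed dual cell more explicit; nothing is missing.
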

\begin{proof}
\begin{align}
\mathring{D}(\widehat{\tau}_0\ldots\widehat{\tau}_k,Sd\, \tau_k) &=\mathrm{Int}(\widehat{\widehat{\tau}_0\ldots\widehat{\tau}_k} * \partial D(\widehat{\tau}_0\ldots\widehat{\tau}_k,Sd\,\tau_k))\notag \\
& \cong \mathrm{Int}(\mathrm{pt}* Sd\, \link(\widehat{\tau}_0\ldots\widehat{\tau}_k,Sd\,\tau_k,Sd\, \tau_k) )\notag \\
&\cong \mathrm{Int}(\mathrm{pt}* Sd\, S^{|\tau_k|-k-1} )\notag \\
&\cong \mathring{\Delta}^{|\tau_k|-k}.\notag
\end{align}
 \end{proof}

\begin{prop}\label{keypropforsubdivofPD}
There is a PL isomorphism \[\mathring{D}(\widehat{\tau}_0\ldots\widehat{\tau}_k,Sd\,X) \to \mathring{D}(\widehat{\tau}_0\ldots\widehat{\tau}_k,Sd\,\tau_k) * \partial D(\tau_k,X).\]
\end{prop}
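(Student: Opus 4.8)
The plan is to reduce everything to an explicit description of $\link(\widehat\tau_0\ldots\widehat\tau_k, Sd\,X)$ and then cone off. Write $\rho:=\widehat\tau_0\ldots\widehat\tau_k$, and recall two standard facts valid for any simplex $\sigma$ of any simplicial complex $K$: the closed dual cell is a cone, $D(\sigma,K)=\widehat\sigma\ast\partial D(\sigma,K)$, and $\partial D(\sigma,K)\cong Sd\,\link(\sigma,K)$ simplicially (via $\sigma_i\mapsto\sigma_i\setminus\sigma$). In particular $\mathring D(\rho,Sd\,X)=D(\rho,Sd\,X)\setminus\partial D(\rho,Sd\,X)$ is the open cone on $Sd\,\link(\rho,Sd\,X)$, so the whole problem is to identify $\link(\rho,Sd\,X)$.

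First I would prove the key lemma: there is a simplicial isomorphism
\[\link(\widehat\tau_0\ldots\widehat\tau_k,Sd\,X)\;\cong\;\link(\widehat\tau_0\ldots\widehat\tau_k,Sd\,\tau_k)\ast Sd\,\link(\tau_k,X).\]
A simplex of $\link(\rho,Sd\,X)$ is a chain $\mu_0<\cdots<\mu_m$ in the face poset of $X$ with each $\mu_j$ comparable to, and distinct from, every $\tau_i$; each such chain is partitioned into ``slots'' — the proper faces of $\tau_0$, the open intervals $(\tau_{i-1},\tau_i)$ for $1\leqslant i\leqslant k$, and the simplices strictly above $\tau_k$ — and an element of one slot is automatically comparable with an element of any other, so $\link(\rho,Sd\,X)$ is the simplicial join of the order complexes of the individual slot posets. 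All slots except the last consist entirely of faces of $\tau_k$, and their join is exactly $\link(\rho,Sd\,\tau_k)$ (inside the closed simplex $\tau_k$ there is no ``above $\tau_k$'' slot); the last slot poset is isomorphic, via $\mu\mapsto\mu\setminus\tau_k$, to the poset of non-empty simplices of $\link(\tau_k,X)$, whose order complex is $Sd\,\link(\tau_k,X)$. Small test cases such as $X=\partial\Delta^n$ with flags of length one or two should be used to pin down the indexing.

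Next I would assemble the statement. By the Remark and Corollary preceding the proposition, $\link(\rho,Sd\,\tau_k)\cong S^{|\tau_k|-k-1}$ and $Sd\,\link(\rho,Sd\,\tau_k)\cong\partial D(\rho,Sd\,\tau_k)$; since $Sd\,\link(\tau_k,X)\cong\partial D(\tau_k,X)$ as well, the lemma yields a PL isomorphism $\partial D(\rho,Sd\,X)\cong\partial D(\rho,Sd\,\tau_k)\ast\partial D(\tau_k,X)$. Coning off with apex $\widehat\rho$ and using associativity of the join,
\[\widehat\rho\ast\bigl(\partial D(\rho,Sd\,\tau_k)\ast\partial D(\tau_k,X)\bigr)=\bigl(\widehat\rho\ast\partial D(\rho,Sd\,\tau_k)\bigr)\ast\partial D(\tau_k,X)=D(\rho,Sd\,\tau_k)\ast\partial D(\tau_k,X),\]
so $D(\rho,Sd\,X)\cong D(\rho,Sd\,\tau_k)\ast\partial D(\tau_k,X)$. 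Finally I would restrict to interiors: $\widehat\rho\ast\partial D(\rho,Sd\,\tau_k)$ is a PL ball with boundary sphere $\partial D(\rho,Sd\,\tau_k)$, and under the associativity homeomorphism the subspace $\partial D(\rho,Sd\,X)$ corresponds to that boundary sphere joined with the second factor; deleting it therefore replaces the first join factor $D(\rho,Sd\,\tau_k)$ by $\mathring D(\rho,Sd\,\tau_k)$, giving the asserted PL isomorphism $\mathring D(\rho,Sd\,X)\cong\mathring D(\rho,Sd\,\tau_k)\ast\partial D(\tau_k,X)$.

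The main obstacle is this last step: ``interior of a join'' does not distribute over join factors in general, so the passage from the closed-dual-cell decomposition to the open one must be handled with care — one must use that the first factor is a genuine PL ball whose boundary is \emph{precisely} the image of the sub-join $\partial D(\rho,Sd\,\tau_k)\ast\partial D(\tau_k,X)$ inside $\partial D(\rho,Sd\,X)$, which is what makes the boundary terms cancel correctly. Establishing the slot decomposition of the link cleanly, so that it is manifestly a simplicial join (in particular the cross-slot comparability and the identification of the ``below $\tau_k$'' part with $\link(\rho,Sd\,\tau_k)$), is the other place where some bookkeeping is needed.
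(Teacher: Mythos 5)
Your proof is correct, but it takes a genuinely different route from the paper's. The paper's proof is entirely explicit: it splits the flag of a vertex $\widehat{\sigma}=\widehat{\widehat{\sigma}_0\ldots\widehat{\sigma}_m}$ of the dual cell at the index $l$ with $\sigma_l=\tau_k$ and sends $\widehat{\sigma}$ to $\widehat{\sigma}$, to $\widehat{\sigma}_{k+1}\ldots\widehat{\sigma}_m$, or to $\tfrac12(\widehat{\sigma}_0\ldots\widehat{\sigma}_l)+\tfrac12(\widehat{\sigma}_{l+1}\ldots\widehat{\sigma}_m)$ according as $l=m$, $l=k$ or neither, extends linearly, and then asserts that ``an elementary, yet lengthy calculation'' verifies this is a PL isomorphism. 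You replace that unperformed calculation by structure: your slot decomposition of the poset underlying $\link(\widehat{\tau}_0\ldots\widehat{\tau}_k,Sd\,X)$ is valid (elements of distinct slots are automatically comparable because the $\tau_i$ form a chain, so the order complex is the join of the slot order complexes), it correctly identifies the link with $\link(\rho,Sd\,\tau_k)\ast Sd\,\link(\tau_k,X)$, and the proposition then follows formally from $D=\widehat{\rho}\ast\partial D$, $\partial D\cong Sd\,\link$, and associativity of the join. Your route buys a verification that is essentially automatic, and it isolates the one genuinely delicate point, which you flag correctly: the ``open join'' in the statement must be read as $\bigl(D(\rho,Sd\,\tau_k)\ast\partial D(\tau_k,X)\bigr)\setminus\bigl(\partial D(\rho,Sd\,\tau_k)\ast\partial D(\tau_k,X)\bigr)$, i.e.\ with the $\partial D(\tau_k,X)$-end deleted --- under the naive closed-join reading the claim already fails for $X=\partial\Delta^2$ and $\tau_k$ a vertex, where the left-hand side is an open interval and the closed join a closed one; this reading is confirmed by the paper's own map $\Phi$ and by the corollary that follows. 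What the paper's route buys is an explicit formula in barycentric coordinates (your composite, unwound, is essentially the same map with the weights $\tfrac12,\tfrac12$ replaced by the natural join coordinates), which is the form in which one could track supports or bounds of the isomorphism if that were needed later.
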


\begin{proof}
A vertex $\widehat{\sigma}\in \mathring{D}(\widehat{\tau}_0\ldots\widehat{\tau}_k,Sd\,X)$ has $\sigma = \widehat{\sigma}_0\ldots \widehat{\sigma}_m$ for some $\sigma_0<\ldots < \sigma_m \subset Sd\, X$ with $\widehat{\tau}_0\ldots\widehat{\tau}_k<\widehat{\sigma}_0\ldots \widehat{\sigma}_m$. Thus there is an $l\geqslant k$ such that $\sigma_l = \tau_k$. Whence $\widehat{\tau}_0\ldots\widehat{\tau}_k\leqslant \widehat{\sigma}_0\ldots \widehat{\sigma}_l \in Sd\,\tau_k$ and $\tau_k<\sigma_{l+1}<\ldots < \sigma_m\subset Sd\, X$. In particular if $l<m$ we have $\widehat{\sigma}_{l+1}\ldots \widehat{\sigma}_m \in \partial D(\tau_k,X)$.

Define a PL map by \[\begin{array}{rcl}\Phi:\mathring{D}(\widehat{\tau}_0\ldots\widehat{\tau}_k,Sd\,X) &\to& \mathring{D}(\widehat{\tau}_0\ldots\widehat{\tau}_k,Sd\,\tau_k) * \partial D(\tau_k,X) \\ \widehat{\sigma} &\mapsto& \brCCC{\widehat{\sigma}\in \mathring{D}(\widehat{\tau}_0\ldots\widehat{\tau}_k,Sd\,\tau_k),}{l=m,}{\widehat{\sigma}_{k+1}\ldots \widehat{\sigma}_{m} \in \partial D(\tau_k,X),}{l=k,}{\dfrac{1}{2}(\widehat{\sigma}_{0}\ldots \widehat{\sigma}_{l})+\dfrac{1}{2}(\widehat{\sigma}_{l+1}\ldots \widehat{\sigma}_{m}),}{\mathrm{otherwise.}}  \end{array}\]
An elementary, yet lengthy calculation verifies this is indeed a PL isomorphism.
\end{proof}

\begin{cor}
Since \[\mathring{D}(\widehat{\tau}_0\ldots\widehat{\tau}_k,Sd\,\tau_k) * \partial D(\tau_k,X) \cong \partial D(\widehat{\tau}_0\ldots\widehat{\tau}_k,Sd\,\tau_k) * \mathring{D}(\tau_k,X)\] we get that 
\begin{equation}\label{keyequation}
\mathring{D}(\widehat{\tau}_0\ldots\widehat{\tau}_k,Sd\,X) \cong S^{|\tau_k|-k-1} * \mathring{D}(\tau_k,X). 
\end{equation}
\end{cor}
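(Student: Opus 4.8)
The plan is to read off \eqref{keyequation} by concatenating three ingredients: Proposition~\ref{keypropforsubdivofPD}, the displayed ``join--swapping'' homeomorphism, and an identification of $\partial D(\widehat{\tau}_0\ldots\widehat{\tau}_k,Sd\,\tau_k)$ with a PL sphere. First I would invoke Proposition~\ref{keypropforsubdivofPD}, which gives $\mathring{D}(\widehat{\tau}_0\ldots\widehat{\tau}_k,Sd\,X)\cong\mathring{D}(\widehat{\tau}_0\ldots\widehat{\tau}_k,Sd\,\tau_k)*\partial D(\tau_k,X)$. Next, for the right--hand side I would rewrite $\partial D(\widehat{\tau}_0\ldots\widehat{\tau}_k,Sd\,\tau_k)$ as a sphere: by the same argument as in the proof of Proposition~\ref{characterisehommflds} (applied with $X$ replaced by $Sd\,\tau_k$ and $\sigma$ by the simplex $\widehat{\tau}_0\ldots\widehat{\tau}_k$ of $Sd\,\tau_k$) there is a simplicial isomorphism $\partial D(\widehat{\tau}_0\ldots\widehat{\tau}_k,Sd\,\tau_k)\cong Sd\,\link(\widehat{\tau}_0\ldots\widehat{\tau}_k,Sd\,\tau_k)$, and the link is PL homeomorphic to $S^{|\tau_k|-k-1}$ (as recorded in the Remark following that proposition, using that $(\tau_k,\partial\tau_k)$ is a combinatorial manifold with boundary); since barycentric subdivision does not change PL type, $\partial D(\widehat{\tau}_0\ldots\widehat{\tau}_k,Sd\,\tau_k)\cong S^{|\tau_k|-k-1}$. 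Combining these two statements with the displayed homeomorphism yields $\mathring{D}(\widehat{\tau}_0\ldots\widehat{\tau}_k,Sd\,X)\cong S^{|\tau_k|-k-1}*\mathring{D}(\tau_k,X)$, which is \eqref{keyequation}.

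It then remains to justify the displayed homeomorphism $\mathring{D}(\widehat{\tau}_0\ldots\widehat{\tau}_k,Sd\,\tau_k)*\partial D(\tau_k,X)\cong\partial D(\widehat{\tau}_0\ldots\widehat{\tau}_k,Sd\,\tau_k)*\mathring{D}(\tau_k,X)$. The key observation is that for every simplex $\rho$ of any simplicial complex $Y$ the defining formulae for dual cells give $D(\rho,Y)=\widehat{\rho}*\partial D(\rho,Y)$, i.e.\ $D(\rho,Y)$ is the cone on its own boundary with cone point $\widehat{\rho}$; consequently $\mathring{D}(\rho,Y)=D(\rho,Y)\setminus\partial D(\rho,Y)$ is the \emph{open} cone $\mathring{c}(\partial D(\rho,Y))$. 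Applying this to $\rho=\widehat{\tau}_0\ldots\widehat{\tau}_k$ in $Y=Sd\,\tau_k$ and to $\rho=\tau_k$ in $Y=X$, the displayed statement becomes an instance of the general PL fact that $\mathring{c}(P)*Q\cong P*\mathring{c}(Q)$ for any polyhedra $P,Q$. I would prove this last fact by hand: writing a point of either join in cone/join coordinates (a system of barycentric weights on the cone vertex, a point of $P$, and a point of $Q$), one exhibits a PL self--homeomorphism of the ambient join $\{\mathrm{pt}\}*P*Q$ that fixes $P$ and $Q$ pointwise, interpolates linearly in the remaining parameters, and ``sweeps'' the cone coordinate across the join, carrying the region cut out by $\mathring{c}(P)*Q$ onto the region cut out by $P*\mathring{c}(Q)$.

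The one genuine piece of work is this last homeomorphism. It is \emph{not} a set--theoretic identity: deleting the base $P*Q$ from the closed join $\{\mathrm{pt}\}*P*Q$ leaves the open cone $\mathring{c}(P*Q)$, whereas $\mathring{c}(P)*Q$ additionally contains a copy of $Q$, so a genuine reparametrisation is required rather than mere deletion. Once that coordinate bookkeeping is carried out, everything else in the argument is pure quotation --- of Proposition~\ref{keypropforsubdivofPD}, of the cone structure of dual cells, and of the link computation above --- and \eqref{keyequation} drops out.
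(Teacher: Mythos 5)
Your skeleton is exactly the paper's: quote Proposition~\ref{keypropforsubdivofPD}, identify $\partial D(\widehat{\tau}_0\ldots\widehat{\tau}_k,Sd\,\tau_k)\cong Sd\,\link(\widehat{\tau}_0\ldots\widehat{\tau}_k,Sd\,\tau_k)\cong S^{|\tau_k|-k-1}$, and perform the join--swap; and you are right that the swap is the only step needing an argument. But your proof of that step does not work. With the point--set conventions you yourself set up, inside the common closed join $c(P*Q)$ one has $\mathring{c}(P)*Q=\mathring{c}(P*Q)\cup Q$ and $P*\mathring{c}(Q)=\mathring{c}(P*Q)\cup P$; a self--homeomorphism of $\{\mathrm{pt}\}*P*Q$ fixing $Q$ pointwise would have to send $Q\subset\mathring{c}(P)*Q$ into $P*\mathring{c}(Q)$, which is disjoint from $Q$, so the homeomorphism you describe cannot exist. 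Worse, the ``general PL fact'' $\mathring{c}(P)*Q\cong P*\mathring{c}(Q)$ is \emph{false} under these conventions: take $P=S^0=\{p_1,p_2\}$ and $Q=\{q\}$ (realised by $k=0$ and $\tau_0$ an edge of $X=\Delta^2$). Then $c(P*Q)$ is a square and $P*Q$ is a closed boundary arc from $p_1$ to $p_2$ through $q$; your $\mathring{c}(P)*Q$ is the square minus that arc with its midpoint $q$ restored, while $P*\mathring{c}(Q)$ is the square minus the arc with the two endpoints $p_1,p_2$ restored. The first space fails to be locally compact at exactly one point and the second at exactly two, so no homeomorphism exists.

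The repair is to notice that the subsets of $Sd^2X$ actually in play are not the standard joins of half--open cones with closed complexes. Writing $P=\partial D(\widehat{\tau}_0\ldots\widehat{\tau}_k,Sd\,\tau_k)$ and $Q=\partial D(\tau_k,X)$, the closed dual cell $D(\widehat{\tau}_0\ldots\widehat{\tau}_k,Sd\,X)$ is carried by $\Phi$ onto $c(P)*Q=P*c(Q)=c(P*Q)$, and one checks that $\Phi$ takes $\partial D(\widehat{\tau}_0\ldots\widehat{\tau}_k,Sd\,X)$ onto the subcomplex $P*Q$; hence $\mathring{D}(\widehat{\tau}_0\ldots\widehat{\tau}_k,Sd\,X)\cong\mathring{c}(P*Q)$. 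Both expressions in the displayed isomorphism, read as they must be here --- namely as the join of the open cone with the closed factor \emph{omitting the closed end}, i.e.\ as the complement of $P*Q$ in $c(P*Q)$ --- are then equal to $\mathring{c}(P*Q)$ on the nose, so the ``swap'' is a set--theoretic identity rather than a homeomorphism to be constructed, and substituting $P\cong S^{|\tau_k|-k-1}$ gives $(\ref{keyequation})$. (The same convention governs how Proposition~\ref{keypropforsubdivofPD} itself must be read; with the standard join its two sides already disagree in the example above.) Alternatively, and more robustly, phrase the corollary in terms of pairs: what is used downstream in Theorem~\ref{subdividingtriangularPD} is only that $(D(\widehat{\tau}_0\ldots\widehat{\tau}_k,Sd\,X),\partial D(\widehat{\tau}_0\ldots\widehat{\tau}_k,Sd\,X))\cong(S^{|\tau_k|-k-1}*D(\tau_k,X),\,S^{|\tau_k|-k-1}*\partial D(\tau_k,X))$, a statement involving only closed joins.
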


\begin{thm}\label{subdividingtriangularPD}
Let $X$ be an $n$-dimensional $R$-homology Poincar\'{e} complex. Then $X$ has $\BB(\A(X))$-controlled Poincar\'{e} duality if and only if $X$ has $\BB(\A(Sd\,X))$-controlled Poincar\'{e} duality.
\end{thm}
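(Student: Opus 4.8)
The plan is to reduce, on both sides, to a collection of \emph{local} conditions on dual cells via Proposition~\ref{chcont}, and then to match these up using the join decompositions of Proposition~\ref{keypropforsubdivofPD} together with the sphere--join homology shift already invoked in the proof of Proposition~\ref{characterisehommflds}. First I would note that ``$\BB(\A(X))$-controlled Poincar\'{e} duality'' must mean the $\A_*$-variant, since the assembled cap product $[X]\cap-$ defining it always lands in $\BB(\A_*(-))$ (the functor $\ttt$ takes values in $\A_*$). Applying Proposition~\ref{chcont}(ii) to this map --- this is the content of the remark that a triangular cap product is a chain equivalence iff each dual cell pair is a homology Poincar\'{e} pair --- one gets, for any subdivision $Y$ of $X$: the complex $X$ has $\BB(\A_*(Y))$-controlled Poincar\'{e} duality if and only if $\Delta_*(D(\rho,Y),\partial D(\rho,Y);R)\simeq\Sigma^{\,n-|\rho|}R$ for every $\rho\in Y$. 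Taking $Y=X$ this is condition $(\star)$: $\Delta_*(D(\sigma,X),\partial D(\sigma,X))\simeq\Sigma^{\,n-|\sigma|}R$ for all $\sigma\in X$. Taking $Y=Sd\,X$, and writing a $k$-simplex of $Sd\,X$ as $\widetilde{\sigma}=\widehat{\tau}_0\ldots\widehat{\tau}_k$ with $\tau_0<\ldots<\tau_k$ in $X$, it is condition $(\star\star)$: $\Delta_*(D(\widetilde{\sigma},Sd\,X),\partial D(\widetilde{\sigma},Sd\,X))\simeq\Sigma^{\,n-k}R$ for every such flag.

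The main step is to show $(\star)\Leftrightarrow(\star\star)$. From Proposition~\ref{keypropforsubdivofPD} and the corollary giving equation~(\ref{keyequation}) I would extract a PL isomorphism of \emph{pairs}
\[\bigl(D(\widetilde{\sigma},Sd\,X),\,\partial D(\widetilde{\sigma},Sd\,X)\bigr)\ \cong\ \bigl(S^{|\tau_k|-k-1}*D(\tau_k,X),\,S^{|\tau_k|-k-1}*\partial D(\tau_k,X)\bigr),\]
the right-hand boundary being the full join with $\partial D(\tau_k,X)$ because $S^{|\tau_k|-k-1}$ is a closed manifold. Joining a finite complex pair with a PL $j$-sphere suspends its relative chain complex $j+1$ times --- the isomorphism used at the last step of the proof of Proposition~\ref{characterisehommflds} --- so this yields
\[\Delta_*\bigl(D(\widetilde{\sigma},Sd\,X),\partial D(\widetilde{\sigma},Sd\,X)\bigr)\ \simeq\ \Sigma^{\,|\tau_k|-k}\,\Delta_*\bigl(D(\tau_k,X),\partial D(\tau_k,X)\bigr).\]
Hence the instance of $(\star\star)$ for the flag $\tau_0<\ldots<\tau_k$ is equivalent to $\Sigma^{\,|\tau_k|-k}\Delta_*(D(\tau_k,X),\partial D(\tau_k,X))\simeq\Sigma^{\,n-k}R$, i.e.\ to $\Delta_*(D(\tau_k,X),\partial D(\tau_k,X))\simeq\Sigma^{\,n-|\tau_k|}R$, which depends only on $\tau_k$ and is exactly the instance of $(\star)$ for $\tau_k$. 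Since every $\sigma\in X$ occurs as the top term $\tau_k$ of some flag (for instance the length-one flag, giving the vertex $\widehat{\sigma}\in Sd\,X$), conditions $(\star)$ and $(\star\star)$ are equivalent, and the theorem follows.

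I expect the only genuine obstacle to be the first line of the main step: upgrading the homeomorphism~(\ref{keyequation}) of \emph{open} dual cells to the displayed isomorphism of the \emph{pairs} $(D,\partial D)$, that is, checking that under the map $\Phi$ of Proposition~\ref{keypropforsubdivofPD} and the cone/join-swap of its corollary the frontier $\partial D(\widetilde{\sigma},Sd\,X)$ corresponds to $S^{|\tau_k|-k-1}*\partial D(\tau_k,X)$. This is the ``elementary yet lengthy calculation'' already flagged after Proposition~\ref{keypropforsubdivofPD}, now carried through to closures and frontiers. The sphere--join suspension statement is standard, and everything else is a routine application of Proposition~\ref{chcont}.
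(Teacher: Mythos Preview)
Your proposal is correct and follows essentially the same approach as the paper, which simply states that the result is ``a direct consequence of equation~(\ref{keyequation}).'' You have faithfully unpacked what that one-line proof means: reduce both sides to the local dual-cell conditions via the remark following the definitions (i.e.\ Proposition~\ref{chcont}), then use the sphere--join identification~(\ref{keyequation}) and the resulting suspension of relative chains to match them up.
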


\begin{proof}
This is now a direct consequence of equation $(\ref{keyequation})$.
\end{proof}

\begin{thm}[Poincar\'{e} Duality Squeezing]\label{PDsqueezing}
Let $X$ be an $n$-dimensional $R$-homology Poincar\'{e} complex. There exists an $\ep=\ep(X)>0$ and an integer $i=i(X,\ep)$ such that for all $j\geqslant i$ if $X$ has an $\ep$-controlled Poincar\'{e} duality chain equivalence \[ [X]\cap -: \Delta^{n-*}(Sd^i\, X) \to \ttt_i\Delta_*^{lf}(Sd^{i+1} X), \]then $X$ has $\BB(\A_*(X))$-controlled Poincar\'{e} duality.
\end{thm}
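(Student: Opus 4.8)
\emph{The plan.} I would run the argument of the Squeezing Theorem (Theorem \ref{triangulise}) with the dual--cell complexes in place of the algebraic subdivisions, the extra geometric ingredient being the join decomposition of subdivided dual cells in equation (\ref{keyequation}). First reduce the conclusion to a local statement: by Proposition \ref{characterisehommflds}, ``$X$ has $\BB(\A_*(X))$--controlled Poincar\'e duality'' is equivalent to ``$X$ is an $n$--dimensional $R$--homology manifold'', i.e.\ to $\Delta_*(D(\sigma,X),\partial D(\sigma,X))\simeq\Sigma^{n-|\sigma|}R$ in $\A$ for every $\sigma\in X$. This depends only on the polyhedron $|X|$, so it is equivalent to the same condition for $Sd^{j}X$, hence (Proposition \ref{characterisehommflds} and the remark after it, or Theorem \ref{subdividingtriangularPD} iterated $j$ times together with Proposition \ref{chcont}) to the level--$j$ cap product $f\colon\Delta^{n-*}(Sd^{j}X)\to\ttt_{j}\Delta^{lf}_*(Sd^{j+1}X)$ being a chain equivalence in $\BB(\A_*(Sd^{j}X))$. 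So it suffices to show: for suitable $\ep=\ep(X)>0$ and $i=i(X,\ep)$, if for some $j\geq i$ this $f$ has a chain inverse and homotopies of control $\leq\ep$ in $\GG_{Sd^{j}X}(\A)$, then $f$ is a chain equivalence in $\BB(\A_*(Sd^{j}X))$.

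\emph{Dual--cell retraction data.} Put $r=r_{0,\dots,j-1}$. The source is easy: $\Delta^{n-*}(Sd^{j}X)=Sd_{r}\Delta^{n-*}(X)$ by Proposition \ref{alggeneralisestop} (up to a choice of orientations, which alters the complex only by an isomorphism), so the chain equivalences $s_{C},r_{C},P_{C}$ of Corollary \ref{rscd} relating it to $\Delta^{n-*}(X)$ obey the support estimates stated there. For the target one must work, because $\ttt_{j}\Delta^{lf}_*(Sd^{j+1}X)$ is genuinely \emph{not} the algebraic subdivision of $\ttt\Delta^{lf}_*(Sd\,X)$ (the local homotopy types over $Sd^{j}X$ differ in general). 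The substitute is equation (\ref{keyequation}): iterated $j$ times it identifies, for each $\widetilde\sigma\in Sd^{j}X$ with carrier $\tau\in X$, the subdivided dual cell $D(\widetilde\sigma,Sd^{j}X)$ with a join $S^{\,|\tau|-|\widetilde\sigma|-1}*D(\tau,X)$. Carrying the retractions $r_{0,\dots,j-1}$ and the straight--line homotopies $P_{0,\dots,j-1}$ of Construction \ref{constructr} through these join decompositions yields chain equivalences $s_{D},r_{D},P_{D}$ identifying $\ttt_{j}\Delta^{lf}_*(Sd^{j+1}X)$, after assembly over $X$, with $\ttt\Delta^{lf}_*(Sd\,X)$, and satisfying support estimates dual to those of Corollary \ref{rscd}: the $r_{D}$--image of an $\ep'$--neighbourhood of a union of open dual cells lies inside that union, and $P_{D}$ moves such a set at most $\ep'/2$ towards its frontier, for $0\leq\ep'\leq\ep$.

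\emph{Squeezing.} Choose $\ep=\ep(X)$ as in the proof of Theorem \ref{triangulise} --- one fifth of a fixed $\ep'(X)<\comesh(X)$ --- and take $i=i(X,\ep)$ large enough that $\mesh(Sd^{i}X)<\comesh(X)-5\ep$ (possible since $\mesh(Sd^{j}X)\to0$). Given $f$ with inverse $g$ and homotopies $Q_{C},Q_{D}$ all of control $\leq\ep$, form the composite $r_{D}f s_{C}\colon\Delta^{n-*}(X)\to\ttt\Delta^{lf}_*(Sd\,X)$ together with $r_{C}g s_{D}$ and the induced homotopies, exactly as in the proof of Theorem \ref{triangulise}. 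Arguing by supports --- following the open--star / open--dual--cell supports of $s_{C}$, $f$, $r_{D}$ and of the composite homotopies, each of the (at most five) steps enlarging the support by no more than $\ep$ --- shows that every structure map lands in triangular position, so the composite is a chain equivalence in $\A_*(X)$. By Proposition \ref{characterisehommflds}, $X$ then has $\BB(\A_*(X))$--controlled Poincar\'e duality, as required.

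\emph{The main obstacle.} The technical heart is the construction of $s_{D},r_{D},P_{D}$ with the advertised estimates. One cannot simply quote Corollary \ref{rscd}, since its proof uses that the target is an algebraic subdivision; instead one must check directly, via the PL isomorphism of Proposition \ref{keypropforsubdivofPD}, that at each of the $j$ stages of subdivision the newly introduced join factor $S^{m}$ is exactly the part of the dual cell that the retraction $r_{\ell}$ collapses to the frontier, so that the support bounds degrade by at most $\ep$ over all $j$ stages. With that established, the rest is a formal repetition of the proof of Theorem \ref{triangulise}.
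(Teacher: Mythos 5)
Your overall architecture (compose the $\ep$-controlled equivalence with subdivision/assembly chain equivalences on either side and argue by supports, as in Theorem \ref{triangulise}) matches the paper, but the target-side equivalence $s_D,r_D,P_D$ --- which you yourself flag as ``the main obstacle'' --- is exactly where the proof is missing, and the route you propose is not the one that works. You are right that $\ttt_j\Delta^{lf}_*(Sd^{j+1}X)$ is not an algebraic subdivision of $\ttt\Delta^{lf}_*(Sd\,X)$ inside $\BB(\A_*(Sd^j\,X))$, but the fix is not to rebuild retraction data for dual cells through the join decomposition $(\ref{keyequation})$ (that equation is used only for Theorem \ref{subdividingtriangularPD}). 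The fix is to subdivide \emph{before} assembling: by Proposition \ref{alggeneralisestop} applied in the covariant category over $Sd\,X$, one has $\Delta^{lf}_*(Sd^{j+1}X)=Sd_{r_{Sd\,X}}\Delta^{lf}_*(Sd\,X)$ in $\BB(\A^*(Sd^{j+1}X))$, so Corollary \ref{rscd} does apply there, and the resulting chain equivalence is then assembled from $\ttt_j$ to $\ttt_0$ as in Definition \ref{assemblyfunctors}. Your assertion that, in the join-decomposition picture, ``the support bounds degrade by at most $\ep$ over all $j$ stages'' is exactly the content that would need proving, and as stated it is an unsupported claim that would require redoing the whole subdivision theory for dual cells.

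The second missing idea is that the two simplicial approximations must be chosen with \emph{opposite} behaviour and played off against each other: $r_X$ sends barycentres towards vertices of $X$, so $r_X^{-1}(\mathring{\sigma})$ lies in a $c_X$-neighbourhood of $Sd^{j-1}\,D(\sigma,X)$, while $r_{Sd\,X}$ sends barycentres towards vertices of $Sd\,X$, so $\bigcup_{\tau\in D(\sigma,X)}r_{Sd\,X}^{-1}(\mathring{\tau})$ contains $\ttt_j(N_\ep(r_X^{-1}(\mathring{\sigma})))$. These are the claims $(\ref{won})$--$(\ref{vor})$, they are what force the composite $((r_{Sd\,X})_D)_{\ttt_0,\ttt_j}\phi(r_X)^C$ to carry $\mathring{\sigma}$ into $D(\sigma,X)=\ttt_0(\mathrm{st}(\sigma))$, i.e.\ into triangular position for $\A_*(X)$, and they require $\ep$ and the constants $c_X,c_{Sd\,X}$ to be small relative to $\comesh(Sd\,X)$ --- whence the paper takes $\ep=\ep(Sd\,X)$ and $i=i(Sd\,X,\ep)$ rather than your $\ep(X)$. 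With a single unspecified $r$ on both sides nothing guarantees triangularity. (Your preliminary reduction to level $j$ via Proposition \ref{characterisehommflds} and Theorem \ref{subdividingtriangularPD} is sound but superfluous, since your squeezing step constructs the level-$0$ equivalence directly.)
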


\begin{proof}
Let $\ep=\ep(Sd\,X)$ and $i=i(Sd\,X,\ep)$ be as in Theorem \ref{triangulise}. Let $j\geqslant i$ and suppose that \[ [X]\cap-: \Delta^{n-*}(Sd^j\,X;R) \to \ttt \Delta^{lf}_*(Sd^{j+1}\,X;R)\] are chain equivalences in $\GG_{Sd^j\,X}(\A)$ with control at most $\ep$. By Proposition \ref{alggeneralisestop}
\begin{align}
\Delta^{n-*}(Sd^j\,X) &= Sd_{r_X}\,\Delta^{n-*}(X), \notag \\\Delta_*^{lf}(Sd^{j+1}\,X) &= Sd_{r_{Sd\,X}}\,\Delta^{lf}_*(Sd\,X),\notag 
\end{align}
for functors 
\begin{align}
& Sd_{r_X}:\BB(\A_*(X)) \to \BB(\A_*(Sd^j\,X)), \notag \\
& Sd_{r_{Sd\,X}}:\BB(\A^*(Sd\,X)) \to \BB(\A^*(Sd^{j+1}\,X)) \notag
\end{align}
defined using any valid choice of simplicial approximations to the identity  
\begin{align}
& r_X:Sd^j\,X \to X, \notag \\
& r_{Sd\,X}:Sd^{j+1}\,X \to Sd\,X. \notag
\end{align}
Let $r_X:Sd^j\,X \to X$ be defined as a composition of simplicial approximations to the identity where at each stage the barycentres $\widehat{\tau}$ of simplices are sent to whichever vertex of $\tau$ is the closest to a vertex of $X$. Define $r_{Sd\,X}$ the same way but always mapping barycentres towards vertices of $Sd\,X$. Let $P_X:\id_X\simeq r_X$ and $P_{Sd\,X}:\id_{Sd\,X} \simeq r_{Sd\,X}$ be the usual canonically defined homotopies. We play the geometric properties of $r_X$ and $r_{Sd\,X}$ off against each other to obtain the desired results.

Let $C:= \Delta^{n-*}(X)\in \BB(\A_*(X))$ and $D:= \Delta^{lf}_*(Sd\,X) \in \BB(\A^*(Sd\,X))$. Following the proof of Theorem \ref{triangulise} $r_{Sd\, X}$ and $P_{Sd\, X}$ induces a chain equivalence
\begin{displaymath}
\xymatrix@C=2cm{(Sd_{r_{Sd\,X}}\, D, d_{Sd_{r_{Sd\,X}}\, D}, (P_{Sd\, X})_D)\ar@<0.5ex>[r]^-{(r_{Sd\, X)_D}} & (D, d_D, 0)\ar@<0.5ex>[l]^-{(s_{Sd\, X)_D}}}
\end{displaymath}
which assembles to 
\begin{displaymath}
\xymatrix@C=2cm{\ttt_j((Sd_{r_{Sd\,X}}\, D), (d_{Sd_{r_{Sd\,X}}\, D)_{\ttt_j,\ttt_j}}, ((P_{Sd\, X})_D)_{\ttt_j,\ttt_j})\ar@<0.5ex>[r]^-{((r_{Sd\, X})_D)_{\ttt_0,\ttt_j}} & (\ttt_0 D, (d_D)_{\ttt_0,\ttt_0}, 0)\ar@<0.5ex>[l]^-{((s_{Sd\, X})_D)_{\ttt_j,\ttt_0}}}
\end{displaymath}
where $\ttt_j: \BB(\A^*(Sd^{j+1}\, X) \to \BB(\A_*(Sd^j\, X)$ denotes the functor that assembles dual cells in $Sd^{j+1}\, X$. Composing this with the chain equivalence  
\begin{displaymath}
\xymatrix{(C, d_C, 0)\ar@<0.5ex>[r]^-{(r_X)^C} & (Sd_{r_{X}}\, C, d_{Sd_{r_{X}}\,C}, (P_X)^C),\ar@<0.5ex>[l]^-{(s_X)^C} 
}
\end{displaymath}
induced by $(r_X,P_X)$ and the $\ep$-controlled chain equivalence 
\begin{displaymath}
\xymatrix{(Sd_{r_{X}}\, C, d_{Sd_{r_{X}}\, C}, Q_C)\ar@<0.5ex>[r]^-{\phi} & (\ttt_j(Sd_{r_{Sd\,X}}\, D), (d_{Sd_{r_{Sd\,X}}\, D})_{\ttt_j,\ttt_j}, Q_D),\ar@<0.5ex>[l]^-{\psi}
}
\end{displaymath}
which exists by hypothesis this yields the following chain equivalence
\begin{displaymath}
\xymatrix{ (C, d_C, (s_X)^{C}(Q_C + \psi ((P_{Sd\,X})_D)_{\ttt_j, \ttt_j}\phi)(r_X)^C) \ar@<0.5ex>[d]^-{((r_{Sd\, X})_D)_{\ttt_0,\ttt_j}\phi (r_X)^C} \\ (\ttt_0 D,(d_{D})_{\ttt_0,\ttt_0},((r_{Sd\,X})_D)_{\ttt_0,\ttt_j}(Q_D + \phi (P_X)^C \psi)((s_{Sd\,X})_D)_{\ttt_j,\ttt_0}).\ar@<0.5ex>[u]^-{(s_X)^C\phi ((s_{Sd\,X})_D)_{\ttt_j,\ttt_0}}
}
\end{displaymath}
Examining the properties of $r_X$, $r_{Sd\,X}$, $\ttt_0$ and $\ttt_j$ we observe that this is seen to be a chain equivalence in $\BB(\A_*(X))$. The properties we need are that for all $\sigma\in X$:
\begin{align}
 &r_X^{-1}(\mathring{\sigma}) \subset N_{c_X}(Sd^{j-1}\, D(\mathring{\sigma},X)), \label{won} \\
 &\ttt_j(N_\ep(r_X^{-1}(\mathring{\sigma}))) \subset \bigcup_{\tau\in D(\sigma,X)}{r_{Sd\, X}^{-1}(\mathring{\tau})}, \label{twoo} \\
 &(P_X)^C: C[N_{c_{Sd\, X + \ep}}(Sd^{j-1}\, D(\sigma,X))] \to C[N_{c_{Sd\, X + \ep}}(Sd^{j-1}\, D(\sigma,X))], \label{free} \\
 &\bigcup_{\tau\in D(\sigma,X)}{r_{Sd\, X}^{-1}(\mathring{\tau})} \subset \ttt_j(Sd^{j}\,st (\mathring{\sigma}) \backslash N_{3\ep}(\partial (St\, \mathring{\sigma} \backslash st\, \mathring{\sigma}))). \label{vor}
\end{align}

\begin{figure}[h!]
\begin{center}
{
\includegraphics[width=9cm]{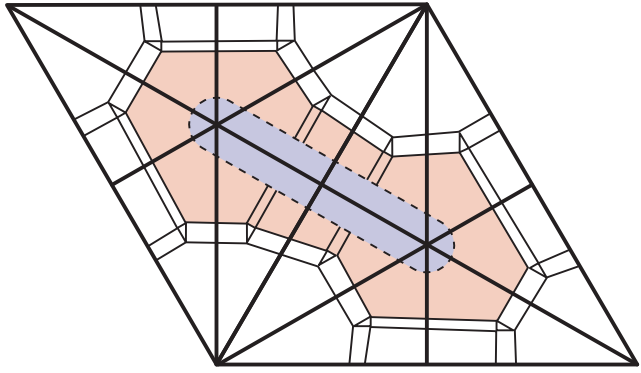}
}
\caption{Illustrating equation $(\ref{twoo})$ with LHS in blue and RHS in red.}
\label{Fig:rXrSdXinterplay}
\end{center}
\end{figure}

The first claim is immediate from the definition of $r_X$; 
all simplices spanned by vertices in $\mathring{D}(v,X)$ are mapped to $v$ hence are in $r_X^{-1}(v)$. All other simplices in $D(v,X)$ are contained within $N_{c_X}(\partial D(v,X))$ so the result follows.

Claim $(\ref{twoo})$ is given by \[\ttt_j(N_\ep(r_X^{-1}(\mathring{\sigma}))) \subset N_{\ep+c_{Sd\,X}}(Sd\, r_X^{-1}(\mathring{\sigma})) \subset N_{\ep + c_{X} + c_{Sd\, X}}(Sd^j\, D(\mathring{\sigma},X)) \subset \bigcup_{\tau\in D(\sigma,X)}{r^{-1}_{Sd\,X}(\mathring{\tau})},\] where the second inclusion follows from $(\ref{won})$ and the third from the fact that \[\ep+ c_{X} + c_{Sd\, X} < \comesh(Sd\, X).\]

Claim $(\ref{free})$ similarly follows from the definition of $r_X$; $(P_X)^C$ maps a region to all simplices whose tracks under $P_X$ go through that region. The region $N_{c_{Sd\, X + \ep}}(Sd^{j-1}\, D(\sigma,X))$ only contains simplices that are furthest from their destination as illustrated by \Figref{Fig:rakefest}.

\begin{figure}[h!]
\begin{center}
{
\includegraphics[width=9cm]{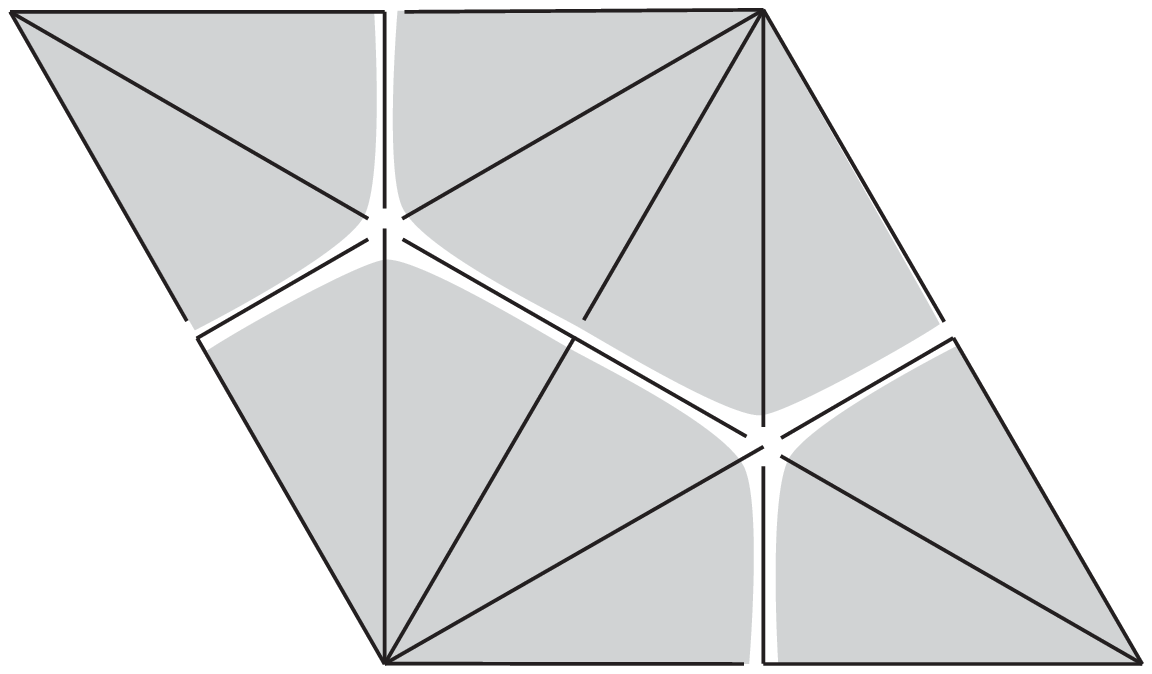} 
}
\caption{}
\label{Fig:rakefest}
\end{center}
\end{figure}

Hence the claim holds.

Claim $(\ref{vor})$ is given by \[\bigcup_{\tau\in D(\sigma,X)}{r_{Sd\, X}^{-1}(\mathring{\tau})} = Sd^j\,st(\mathring{\sigma}) \backslash \bigcup_{\rho \in \partial(St(\mathring{\sigma}))}{r_{Sd\, X}^{-1}(\mathring{\rho})}\subset Sd^{j}\,st (\mathring{\sigma}) \backslash N_{4\ep}(\partial (St\, \mathring{\sigma} \backslash st\, \mathring{\sigma}))\] since $4\ep< \comesh(Sd\, X)$. Thus the claim holds as $\ttt_j$ reduces $4\ep$ to $4\ep-c_{Sd\, X}>3\ep$.

Now we show that the chain equivalence $C\cong \ttt_0 D$ is a chain equivalence in $\BB(\A_*(X))$ by proving that the maps 
\begin{enumerate}[(i)]
 \item $((r_{Sd\, X})_D)_{\ttt_0,\ttt_j}\phi (r_X)^C$, 
 \item $(s_X)^C\psi ((s_{Sd\,X})_D)_{\ttt_j,\ttt_0}$, 
 \item $(s_X)^{C}(Q_C + \psi ((P_{Sd\,X})_D)_{\ttt_j, \ttt_j}\phi)(r_X)^C)$ and
 \item  $((r_{Sd\,X})_D)_{\ttt_0,\ttt_j}(Q_D + \phi (P_X)^C \psi)((s_{Sd\,X})_D)_{\ttt_j,\ttt_0}$
\end{enumerate}
are morphisms of $\A_*(X)$. We may ignore $Q_C$ and $Q_D$ in our calculations as these have control $\ep$ and must be strictly better behaved than  $\psi ((P_{Sd\,X})_D)_{\ttt_j, \ttt_j}\phi$ and $\phi (P_X)^C \psi$ respectively. Using the above observations and arguing by supports as in the proof of Theorem \ref{triangulise} we have
\begin{enumerate}[(i)]
 \item $\xymatrix@R=3mm{ \mathring{\sigma} \ar[r]^-{(r_X)^C} & r_X^{-1}(\mathring{\sigma}) \ar[r]^-{\phi} & \ttt_j(N_\ep(r_X^{-1}(\mathring{\sigma}))) \subset \bigcup_{\tau\in D(\sigma,X)}{r_{Sd\, X}^{-1}(\mathring{\tau})} \\ &  \ar[r]^-{(r_{Sd\, X})_D} & D(\sigma,X) = \ttt_0(st\, \mathring{\sigma}), 
}$
\item $\xymatrix@R=3mm{ \ttt_0(\mathring{\sigma}) = \mathring{D}(\sigma,X) \ar[rr]^-{(s_{Sd\,X})_D} && Sd^j\, D(\sigma,X) \subset \ttt_j(N_{c_{Sd\, X}}(Sd^{j-1}\, D(\sigma,X))) \\
&\ar[r]^-{\psi} & N_{\ep + c_{Sd\, X}}(Sd^{j-1}\, D(\sigma,X)) \subset Sd^j\, (st\, \mathring{\sigma}) \ar[r]^-{(s_X)^C} & st\,\mathring{\sigma},
}$
\item $\xymatrix@R=3mm{ \mathring{\sigma} \ar[r]^-{(r_X)^C} & r_X^{-1}(\mathring{\sigma}) \ar[r]^-{\phi} & \ttt_j(N_\ep(r_X^{-1}(\mathring{\sigma}))) \subset \bigcup_{\tau\in D(\sigma,X)}{r_{Sd\, X}^{-1}(\mathring{\tau})} \\ & \ar[r]^-{(P_{Sd\,X})_D} & \bigcup_{\tau\in D(\sigma,X)}{r_{Sd\, X}^{-1}(\mathring{\tau})} \subset \ttt_j(Sd^j\,st (\mathring{\sigma}) \backslash N_{3\ep}(\partial (St\, \mathring{\sigma} \backslash st\, \mathring{\sigma}))) \\ & \ar[r]^-{\psi} & Sd^j\,st (\mathring{\sigma}) \backslash N_{2\ep}(\partial (St\, \mathring{\sigma} \backslash st\, \mathring{\sigma})) \subset Sd^j\,st(\mathring{\sigma}) \ar[r]^-{(s_X)^{C}} & st(\mathring{\sigma}),
}$
\item $\xymatrix@R=3mm{ \ttt_0(\mathring{\sigma}) = \mathring{D}(\sigma,X) \ar[rr]^-{(s_{Sd\,X})_D} && Sd^j\, D(\sigma,X) \subset \ttt_j(N_{c_{Sd\, X}}(Sd^{j-1}\, D(\sigma,X))) }$

$\xymatrix@R=3mm{\ar[r]^-{\psi} & N_{\ep + c_{Sd\, X}}(Sd^{j-1}\, D(\sigma,X)) \ar[r]^-{(P_X)^C} & N_{\ep + c_{Sd\, X}}(Sd^{j-1}\, D(\sigma,X)) }$

$\xymatrix@R=3mm{ \ar[r]^-{\phi} & N_{2\ep + c_{Sd\, X}}(Sd^{j-1}\, D(\sigma,X)) \subset \bigcup_{\tau\in D(\sigma,X)}{r_{Sd\, X}^{-1}(\mathring{\tau})} \ar[rr]^-{(r_{Sd\, X})_D} && D(\sigma,X) = \ttt_0(st\,\mathring{\sigma}).}$
\end{enumerate}
This completes the proof.
\end{proof}

We also have a Poincar\'{e} duality splitting theorem.
\begin{rmk}\label{exptransequal}
Note that
\begin{displaymath}
\xymatrix@R=0mm@C=3mm{
(t)_* \ttt \Delta^{lf}_*(t^1(X\times\R)) \ar@{}[r]|-{=} & \ttt \Delta^{lf}_*(Sd_1\, t^1(X\times\R)).
} 
\end{displaymath}
\end{rmk}

\begin{thm}
If $X\times\R$ has bounded $(n+1)$-dimensional Poincar\'{e} duality measured in $O(X_+)$ then $X$ is a $\BB(\A_*(Sd^i\,X))$-controlled Poincar\'{e} complex, for all $i\in \N$.
\end{thm}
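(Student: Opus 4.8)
The plan is to run the same argument as in the proof of Theorem \ref{alganal}: transport the bounded chain equivalence over $O(X_+)$ onto a slice by exponential translation and the Splitting Theorem, and read off the dual cells of $X$. First I would reduce what must be shown. By Proposition \ref{characterisehommflds} together with Theorem \ref{subdividingtriangularPD} (applied inductively on the number of subdivisions), it suffices to prove that $X$ is an $n$-dimensional $R$-homology manifold; and by the proof of Proposition \ref{characterisehommflds} this is exactly the assertion that
\[
\Delta_*\!\bigl(D(\sigma,X),\partial D(\sigma,X);R\bigr)\;\simeq\;\Sigma^{\,n-|\sigma|}R\qquad\text{for every }\sigma\in X .
\]
(None of these citations depend on the implication being proved, so there is no circularity inside Theorem \ref{hommfldequivs}.)

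Next I would set $C:=\Delta^{n+1-*}(t^1(X\times\R))$ and $D:=\ttt\Delta^{lf}_*(Sd\,t^1(X\times\R))$, both regarded as chain complexes in $\BB(\A_*(t^1(X\times\R)))$ (the first is a cochain complex; the second is the dual-cell assembly of a chain complex in $\BB(\A^*(Sd\,t^1(X\times\R)))$). By hypothesis $X\times\R$ has bounded $(n+1)$-dimensional Poincar\'e duality over $O(X_+)$, so there is a fundamental class $[t^1(X\times\R)]$ and a chain equivalence $[t^1(X\times\R)]\cap-\colon C\to D$ in $\GG_{t^1(X\times\R)}(\A)$ with finite bound $B<\infty$ measured in $O(X_+)$. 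I would then check that $C$ and $D$ are exponential translation equivalent: $C$ is a suspension of the simplicial cochain complex of $t^1(X\times\R)$, for which $(t^{-1})_*$ agrees with algebraic subdivision by Proposition \ref{alggeneralisestop}; and $D$ is handled by the argument of Remark \ref{exptransequal}, carried out on the barycentric subdivision $Sd\,t^1(X\times\R)$ in place of $t^1(X\times\R)$. Now the Splitting Theorem \ref{crucsplit} applies and yields, for every $i\geq1$, a chain equivalence $C|_{t^1(X\times\{v_i\})}\simeq D|_{t^1(X\times\{v_i\})}$ in $\A(t^1(X\times\{v_i\}))$, which, regarded over the slice triangulation, is a chain equivalence in $\BB(\A_*(Sd^{i-1}X))$.

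The last step is to identify the two restrictions. The slice $X\times\{v_i\}$ carries the triangulation $Sd^{i-1}X$, and over a simplex $\sigma$ of it one has $\bigl(C|_{t^1(X\times\{v_i\})}\bigr)(\sigma)=\Sigma^{\,n+1-|\sigma|}R$ (immediate from the definition of the simplicial cochain complex) and $\bigl(D|_{t^1(X\times\{v_i\})}\bigr)(\sigma)\simeq\Sigma\,\Delta_*\!\bigl(D(\sigma,Sd^{i-1}X),\partial D(\sigma,Sd^{i-1}X);R\bigr)$ — the extra suspension coming from the $\R$-coordinate, since the dual cell of $\sigma\times\{v_i\}$ in $Sd\,t^1(X\times\R)$ is PL homeomorphic to $D(\sigma,Sd^{i-1}X)$ times an interval, so its relative chains are one suspension of the relative chains of $(D(\sigma,Sd^{i-1}X),\partial D(\sigma,Sd^{i-1}X))$; alternatively this identification can be routed through the functor ``$-\otimes\R$'' of the Remark following Theorem \ref{alganal}, Proposition \ref{alggeneralisestop} and equation $(\ref{keyequation})$. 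Applying Proposition \ref{chcont} to the chain equivalence produced by the Splitting Theorem, it restricts over each $\sigma$ to a chain equivalence in $\BB(\A)$; the common suspension $\Sigma$ cancels and we get $\Delta_*\!\bigl(D(\sigma,Sd^{i-1}X),\partial D(\sigma,Sd^{i-1}X);R\bigr)\simeq\Sigma^{\,n-|\sigma|}R$ for all $\sigma\in Sd^{i-1}X$. Taking $i=1$ gives precisely what the first paragraph reduced to, so $X$ is an $R$-homology manifold and we are done.

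The step I expect to be the main obstacle is this final identification of the slices: it is pure bookkeeping but delicate, since one must keep track of the degree shift introduced by the $\R$-factor and verify that dual cells of slice simplices in the barycentric subdivision of $t^1(X\times\R)$ genuinely split off an interval factor — equivalently, pin down exactly how ``$-\otimes\R$'' acts on $\Delta^{n-*}(-)$ and on $\ttt\Delta^{lf}_*(Sd\,-)$. A secondary technical point is the exponential translation equivalence of $D=\ttt\Delta^{lf}_*(Sd\,t^1(X\times\R))$, i.e.\ extending Remark \ref{exptransequal} from $\ttt\Delta^{lf}_*(t^1(X\times\R))$ to its barycentric subdivision. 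Once these two points are dispatched, everything else is a direct application of the Splitting Theorem together with Propositions \ref{chcont} and \ref{characterisehommflds} and Theorem \ref{subdividingtriangularPD}.
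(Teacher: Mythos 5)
Your strategy is viable and in outline it is the same as the paper's: exponentially translate to shrink the bound, squeeze onto a slice $X\times\{v_i\}$, and identify the dual cells of slice simplices as products with an interval (the paper uses exactly your isomorphism $\mathring{D}(\tau\times\{v_i\},t^1(X\times\R))\cong\mathring{D}(\tau,Sd^i\,X)\times\mathring{D}(\{v_i\},\R)$, and your reduction via Propositions \ref{chcont} and \ref{characterisehommflds} and Theorem \ref{subdividingtriangularPD} is legitimate and non-circular). The one genuine divergence is the middle step: you invoke the general Splitting Theorem \ref{crucsplit} as a black box, whereas the paper does \emph{not} — it re-runs the splitting argument using the Poincar\'{e} duality Squeezing Theorem \ref{PDsqueezing} in place of the ordinary Squeezing Theorem \ref{triangulise}. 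The reason is precisely the point you dismiss as ``secondary'': the target $\ttt\Delta^{lf}_*(Sd\,t^1(X\times\R))$ is a dual-cell \emph{assembly} of a subdivided complex, and subdivision does not commute with assembly, so after exponential translation one has a controlled equivalence from $Sd_k\,\Delta^{n+1-*}(t^1(X\times\R))$ to $\ttt\,Sd_k\,\Delta^{lf}_*(Sd\,t^1(X\times\R))$ rather than to $Sd_k$ of the assembled complex. To feed this into Theorem \ref{crucsplit} you must prove that the assembled complex is exponential translation equivalent, i.e.\ that $(t^{-1})_*\bigl(\ttt\Delta^{lf}_*(Sd\,t^1(X\times\R))\bigr)\simeq Sd_1\bigl(\ttt\Delta^{lf}_*(Sd\,t^1(X\times\R))\bigr)$ in $\BB(\A_*(t^0(X\times\R)))$; this is not a routine extension of Remark \ref{exptransequal} but amounts to the dual-cell subdivision analysis of Proposition \ref{keypropforsubdivofPD} and equation $(\ref{keyequation})$, which is the same content Theorem \ref{subdividingtriangularPD} rests on. The paper's route buys freedom from ever proving this, at the cost of redoing the support arguments with the two competing simplicial approximations $r_X$, $r_{Sd\,X}$ and the assembly functors $\ttt_0$, $\ttt_j$ (claims $(\ref{won})$--$(\ref{vor})$); your route buys a cleaner quotation of Theorem \ref{crucsplit}, at the cost of having to supply that exponential translation equivalence in full. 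So either promote your ``secondary technical point'' to a lemma and prove it via equation $(\ref{keyequation})$, or follow the paper and substitute Theorem \ref{PDsqueezing} for Theorem \ref{triangulise} inside the splitting argument; as written, that single step is the only real gap.
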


\begin{proof}
By assumption there is a $B<\infty$ and Poincar\'{e} duality chain equivalences \[\phi = [t^1(X\times\R)]\cap -: \Delta^{n+1-*}(t^1(X\times\R)) \to \ttt\Delta^{lf}_*(Sd\, t^1(X\times\R))\] in $\GG_{t^1(X\times\R)}(\A)$ with control $B$.

If $B=0$, $\phi$ is diagonal and hence a chain equivalence in $\BB(\A_*(t^1(X\times\R)))$. The restriction to $t^1(X\times\{v_i\})$ is hence a chain equivalence 
\begin{equation}\label{neededtosplit}
\Delta^{n+1-*}(t^1(X\times\{v_i\})) \to \ttt\Delta^{lf}_*(Sd\, t^1(X\times\R))|_{t^1(X\times\{v_i\})} 
\end{equation}
in $\BB(\A_*(t^1(X\times\R))) = \BB(\A_*(Sd^{i}\,X))$. 

Since for all $\tau\in Sd^i\,X$: \[ \mathring{D}(\tau\times\{v_i\},t^1(X\times\R))\cong \mathring{D}(\tau,Sd^{i}\,X)\times \mathring{D}(\{v_i\},\R)\] we deduce that \[ \ttt\Delta^{lf}_*(Sd\, t^1(X\times\R))|_{t^1(X\times\{v_i\})} \simeq \ttt\Delta^{lf}_{*-1}(Sd^{i+1}\,X)\] in $\BB(\A_*(Sd^{i}\,X))$ and hence \[\Delta^{n-(*-1)}(Sd^i\,X) \to \ttt\Delta_{*-1}^{lf}(Sd^{i+1}\,X)\] is a chain equivalence in $\BB(\A_*(Sd^i\,X))$ as claimed.

So suppose $B>0$. For all $v_j>B+1$ we can find an interval $J:=[v_{j_-},v_{j_+}]\subset [1,\infty)$ containing $(v_j-B, v_j+B)$. Since $t^1(X\times J)$ is f.d. and l.f. it satisfies the conditions of the Poincar\'{e} duality squeezing theorem; let \[\ep = \ep(t^1(X\times J)),\quad i = i(t^1(X\times J))\] be as given by this theorem.

By Remark \ref{slide} there exists a $k\geqslant i$ such that \[t^{-k}\phi: t^{-k}\Delta^{n+1-*}(t^1(X\times\R))\to t^{-k}\ttt\Delta^{lf}_*(Sd\, t^1(X\times\R))\] is a chain equivalence in $\mathbb{G}_{t^{-k}(X\times\R)}(\A)$ with bound less than $\frac{\ep}{3}$. By Example \ref{exptransequal}
we have that 
\begin{align}
 t^{-k}\Delta^{n+1-*}(t^1(X\times\R)) &= Sd_k \Delta^{n+1-*}(t^1(X\times\R)) \notag \\
 t^{-k}\ttt\Delta^{lf}_*(Sd\, t^1(X\times\R)) &= \ttt Sd_k \Delta^{lf}_*(Sd\, t^1(X\times\R)), \notag
\end{align}
so we view $t^{-k}\phi$ as an $\frac{\ep}{3}$-controlled chain equivalence \[Sd_k \Delta^{n+1-*}(t^1(X\times\R)) \to 
\ttt Sd_k \Delta^{lf}_*(Sd\, t^1(X\times\R)).\]
Proceeding as in Theorem \ref{PDsqueezing} we form the composition \[((r_{Sd\, t^1(X\times\R)})_D)_{\ttt_0,\ttt_i} t^{-k}\phi (r_{t^1(X\times\R)})^C: \Delta^{n+1-*}(t^1(X\times\R)) \to \ttt \Delta^{lf}_*(Sd\, t^1(X\times\R))\] for  
$C=\Delta^{n+1-*}(t^1(X\times\R))$ and $D= \Delta^{lf}_*(Sd\, t^1(X\times\R))$. The composition is a chain equivalence which, by the proof of Theorem \ref{PDsqueezing}, restricts to a $\BB(\A_*(t^1(X\times J)))$ chain equivalence in a neighbourhood of $t^1(X\times\{v_j\})$. Hence we get equation (\ref{neededtosplit}) as in the $B=0$ case and the result follows.
\end{proof}

\section*{Appendix}\label{section:appendix}

\begin{lem}\label{euclideancomesh}
Let $\sigma \subset \R^N$ be an simplex linearly embedded in euclidean space. Then
\[ \comesh(Sd\, \sigma) \geqslant \dfrac{\rad(\sigma)}{|\sigma|(|\sigma|+1)}.\]
\end{lem}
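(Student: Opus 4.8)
The plan is to prove the pointwise statement that every simplex $\tau=\widehat{\sigma}_0\ldots\widehat{\sigma}_k\in Sd\,\sigma$ with $|\tau|\geq 1$ already satisfies $\rad(\tau)\geq \rad(\sigma)/(|\sigma|(|\sigma|+1))$; taking the infimum over all such $\tau$ then gives the lemma. Write $n=|\sigma|$ (we may assume $n\geq 1$), let $\sigma_0<\cdots<\sigma_k\leq\sigma$ be the chain of faces defining $\tau$, put $d_j=|\sigma_j|$, so $0\leq d_0<\cdots<d_k\leq n$, $k\leq n$, and $d_i\leq d_k-1\leq n-1$ whenever $i<k$. Since distances inside $\mathrm{aff}(\sigma)$ coincide with ambient distances, we may assume $\sigma$ is top-dimensional in $\R^n$. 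Let $V_j$ be the vertex set of $\sigma_j$ and set $L_0=V_0$, $L_j=V_j\setminus V_{j-1}$ for $1\leq j\leq k$; each $L_j\neq\emptyset$.

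First I would bound $\rad(\tau)$ below by the ``widths'' of $\tau$. For $0\leq i\leq k$ let $F_i\leq\tau$ be the facet opposite the vertex $\widehat{\sigma}_i$, with barycentre $\widehat{F}_i$, and set $w_i(\tau):=d(\widehat{\sigma}_i,\mathrm{aff}\,F_i)$. From $\widehat{\tau}=\tfrac{1}{k+1}\widehat{\sigma}_i+\tfrac{k}{k+1}\widehat{F}_i$ with $\widehat{F}_i\in\mathrm{aff}\,F_i$ one reads off $d(\widehat{\tau},\mathrm{aff}\,F_i)=\tfrac{1}{k+1}w_i(\tau)$, so
\[ \rad(\tau)=\min_{0\leq i\leq k} d(\widehat{\tau},F_i)\;\geq\;\frac{1}{k+1}\min_{0\leq i\leq k}w_i(\tau). \]

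The heart of the argument is to bound each $w_i(\tau)$ below by a width of $\sigma$. Let $c_v$ (for $v$ a vertex of $\sigma$) be the barycentric-coordinate functions of $\sigma$, extended affinely to $\R^n$; each $c_v$ vanishes on the affine hull of the facet of $\sigma$ opposite $v$, equals $1$ at $v$, and has $|\nabla c_v|=1/w_v(\sigma)$, where $w_v(\sigma)$ is the corresponding width of $\sigma$. Fixing $i$, I would look for an affine function $\widetilde{t}_i=\sum_v a_v c_v$ with $\widetilde{t}_i(\widehat{\sigma}_j)=0$ for $j\neq i$ and $\widetilde{t}_i(\widehat{\sigma}_i)=1$. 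Because $\widetilde{t}_i(\widehat{\sigma}_j)=\tfrac{1}{d_j+1}\sum_{v\in V_j}a_v$, this simply prescribes the partial sums $\sum_{v\in V_j}a_v$, equivalently the layer-sums $\sum_{v\in L_j}a_v$: these must vanish except on $L_i$ (sum $d_i+1$) and on $L_{i+1}$ (sum $-(d_i+1)$), the $L_{i+1}$ constraint being absent when $i=k$. Choosing $a_v$ uniform on each of these one or two non-empty layers and zero elsewhere, and estimating $|\nabla\widetilde{t}_i|\leq\sum_v|a_v|\,|\nabla c_v|\leq(\sum_v|a_v|)/\min_v w_v(\sigma)$, gives $|\nabla\widetilde{t}_i|\leq 2(d_i+1)/\min_v w_v(\sigma)$ when $i<k$ and $|\nabla\widetilde{t}_k|\leq(d_k+1)/\min_v w_v(\sigma)$. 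Since $\widetilde{t}_i$ vanishes on $\mathrm{aff}\,F_i$ and equals $1$ at $\widehat{\sigma}_i$, we get $w_i(\tau)\geq 1/|\nabla\widetilde{t}_i|$; feeding in $d_i+1\leq n$ for $i<k$ and $d_k+1\leq n+1$, and using $2n\geq n+1$, yields $\min_i w_i(\tau)\geq \min_v w_v(\sigma)/(2n)$.

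Finally I would compare $\min_v w_v(\sigma)$ with $\rad(\sigma)$: the inscribed ball of $\sigma$ has radius $\rho(\sigma)$, is tangent to the facet opposite each $v$, and lies in the slab of width $w_v(\sigma)$ normal to that facet, so $2\rho(\sigma)\leq w_v(\sigma)$ for every $v$; and $\rad(\sigma)=d(\widehat{\sigma},\partial\sigma)\leq\rho(\sigma)$ because the incentre maximises the distance to $\partial\sigma$. Hence $\min_v w_v(\sigma)\geq 2\rad(\sigma)$, and combining the three steps with $k+1\leq n+1$,
\[ \rad(\tau)\;\geq\;\frac{1}{k+1}\cdot\frac{\min_v w_v(\sigma)}{2n}\;\geq\;\frac{2\rad(\sigma)}{2n(n+1)}\;=\;\frac{\rad(\sigma)}{n(n+1)}. \]
The main obstacle is the middle step: constructing the auxiliary functional $\widetilde{t}_i$ with a good gradient bound, and doing the bookkeeping of layer-sums carefully enough --- in particular treating $i=k$ separately --- to land on the sharp constant $n(n+1)$ rather than the weaker $(n+1)^2$. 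The first and last steps are short and elementary.
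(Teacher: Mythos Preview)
Your proof is correct and takes a genuinely different route from the paper's. The paper argues by comparison with a regular simplex: it asserts that $\sigma$ contains a regular $|\sigma|$-simplex $\tau$ with edge length $2\rad(\sigma)$, claims that $\comesh(Sd\,\tau)\leq\comesh(Sd\,\sigma)$ because $\tau\subset\sigma$, and then computes explicitly for the regular $\tau$ by iterating the same containment idea on the simplices of $Sd\,\tau$. You instead work directly with an arbitrary $\tau=\widehat{\sigma}_0\ldots\widehat{\sigma}_k\in Sd\,\sigma$: you bound $\rad(\tau)$ below by $\tfrac{1}{k+1}\min_i w_i(\tau)$, control each width $w_i(\tau)$ by building an affine functional $\widetilde{t}_i$ in the barycentric coordinates of $\sigma$ that separates $\widehat{\sigma}_i$ from $\mathrm{aff}\,F_i$ with gradient at most $2n/\min_v w_v(\sigma)$, and finish with $\min_v w_v(\sigma)\geq 2\rad(\sigma)$ via the inscribed ball. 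Your layer-sum bookkeeping and the separate handling of $i=k$ are exactly what is needed to hit the constant $n(n+1)$ rather than $(n+1)^2$. The paper's approach is geometrically vivid but its key step ``$\tau\subset\sigma\Rightarrow\comesh(Sd\,\tau)\leq\comesh(Sd\,\sigma)$'' is not evident as stated, since the simplices of $Sd\,\tau$ are not simplices of $Sd\,\sigma$ and mere set-theoretic containment does not compare their radii; your argument sidesteps this entirely by never leaving $Sd\,\sigma$. The trade-off is that your method is more analytic and less pictorial, but it is self-contained and would adapt readily to other subdivision schemes whose new vertices are barycentres of faces.
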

\begin{proof}
All the edges of $\sigma$ have length at least $2\rad(\sigma)$ hence $\sigma$ contains a 
regular $|\sigma|$-simplex $\tau$ with edges all of length $2\rad(\sigma)$ inside it. As $\tau \subset \sigma$ we must have $\comesh(Sd\, \tau)\leqslant \comesh(Sd\, \sigma)$. As $\tau$ is regular, the length of the shortest edge in $Sd\, \tau$ is equal to $\rad(\tau)$ which is \[\dfrac{\rad(\sigma)\sqrt{2}}{\sqrt{|\sigma|(|\sigma|+1)}}.\] Thus each simplex $\rho \in Sd\,\tau$ contains a regular $|\rho|$-simplex with edge length equal to \[\dfrac{\rad(\sigma)\sqrt{2}}{\sqrt{|\sigma|(|\sigma|+1)}}.\] This regular $|\rho|$-simplex thus has radius \[\dfrac{\rad(\sigma)\sqrt{2}}{\sqrt{|\sigma|(|\sigma|+1)}}\dfrac{1}{\sqrt{2|\rho|(|\rho|+1)}} \geqslant \dfrac{\rad(\sigma)}{|\sigma|(|\sigma|+1)}\]so the result follows.
\end{proof}

\begin{lem}
The maps $s_*$ and $r_*$ defined in the proof of Theorem \ref{Thm:subdivassemblechainequiv} are chain maps. 
\end{lem}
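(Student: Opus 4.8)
The plan is to verify the chain-map identities entry by entry, after reducing everything to the single stellar subdivision $X^\prime = (\sigma,\widehat{\sigma})X$ and the fixed simplicial approximation $r\colon X^\prime\to X$ for which $s_*\colon C\to\rr_r Sd_r\,C$ and $r_*\colon\rr_r Sd_r\,C\to C$ were written down in the proof of Theorem \ref{Thm:subdivassemblechainequiv}. A morphism of ($X$- or $X^\prime$-graded) chain complexes is a chain map precisely when each of its matrix entries satisfies the Leibniz identity, so for $r_*$ it suffices to check, for every pair $\rho\leqslant\tau$ of simplices of $X$, the equality
\[ (d_C)_{\rho,\tau}\circ (r_*)_{\tau,\tau} \;=\; (r_*)_{\rho,\rho}\circ (d_{\rr_r Sd_r\, C})_{\rho,\tau}, \]
(using that $(r_*)_{\rho,\tau}=0$ for $\rho\neq\tau$), and for $s_*$ the corresponding identity, expanded over the $X^\prime$-graded pieces of $Sd_r\,C$ and cut down by the explicit support of $s_*$ given in the proof.

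For $r_*$ I would first observe that $(r_*)_{\tau,\tau}$ only involves the top-dimensional summands of $\rr_r Sd_r\, C(\tau)$, namely those $Sd_r\,C(\widetilde{\nu})$ with $r(\widetilde{\nu})=\tau$ and $|\widetilde{\nu}|=|\tau|$, on which the degree shift of $(\ref{SdCn})$ vanishes; likewise for $\rho$. Inspecting $(\ref{dSdconcise})$ one sees that between two such top-dimensional faces only the first case, the entry $(d_C)_{r(\widetilde{\mu}),r(\widetilde{\nu})}$, can be non-zero, since each $\pm\id$ case forces $r(\widetilde{\mu})=r(\widetilde{\nu})$, which is incompatible with having images $\rho<\tau$. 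What then remains is the standard combinatorial fact that the induced map of relative locally finite simplicial chains $r_{\tau,\tau}\colon\Delta^{lf}_*(\overline{r^{-1}(\mathring{\tau})},\mathrm{Fr}\,r^{-1}(\mathring{\tau}))\to\Delta^{lf}_*(\tau,\partial\tau)$ is a chain map, tensored with $\id_{C(\tau)}$ and assembled over the dual cells; this is exactly the content of the displayed identity.

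For $s_*$ the verification is more involved, because $(s_*)_{\widetilde{\rho},\tau,n}$ is given by a two-clause formula whose second clause is the differential $(-1)^{n+1}(d_C)_{r(\widetilde{\rho}),\tau,n}$. Here I would fix $\tau\in X$ and a face $\widetilde{\rho}\in X^\prime$ and expand both composites $d_{\rr_r Sd_r\, C}\circ s_*$ and $s_*\circ d_C$ on $C(\tau)_n$ using $(\ref{dSdconcise})$, $(\ref{SdCn})$ and the formula for $s_*$, sorting the resulting terms by which clause of each formula they come from. The terms involving two copies of $d_C$ cancel by $(d_C)^2=0$; the remaining terms each involve a single copy of $d_C$ (coming from the first case of $(\ref{dSdconcise})$ against the identity clause of $s_*$, from the identity clause of $s_*$ against a $\pm\id$ case of $(\ref{dSdconcise})$, or from $d_C$ against the identity clause of $s_*$ on the other side), and one must pair them off between the two composites. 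This matching uses the explicit incidence relations of the stellar subdivision of $\sigma*\link(\sigma,X)$ by $\widehat{\sigma}*\partial\sigma*\link(\sigma,X)$ and the one-to-one correspondence $\{\widetilde{\rho}\leqslant\widetilde{\tau}\leqslant\widetilde{\sigma}\}\leftrightarrow\{r(\widetilde{\rho})\leqslant\tau\leqslant r(\widetilde{\sigma})\}$ recorded after the definition of $Sd_r$; the signs $(-1)^n$, $(-1)^{n+1}$ in $(\ref{dSdconcise})$ and in $s_*$ are arranged precisely so that the two members of each pair carry opposite signs.

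The main obstacle is exactly this last bookkeeping for $s_*$: one has to enumerate, for each $\widetilde{\rho}$ and $\tau$, the finitely many faces $\widetilde{\mu}$ of $X^\prime$ that mediate each composite, identify their types in $(\ref{dSdconcise})$ and in the clauses of $s_*$, and confirm the signs cancel — this is the ``technical and rather unilluminating'' computation alluded to in the text. The analogous maps for $\A_*(X)$ are handled by the dual argument, using $(\ref{dSdconciseother})$ and $(\ref{SdCnother})$ in place of $(\ref{dSdconcise})$ and $(\ref{SdCn})$. An essentially equivalent alternative is to make rigorous the identification of $(Sd_r\,C)|_{r^{-1}(\mathring{\tau})}$ with the componentwise tensor product $C(\tau)\otimes\Sigma^{-|\tau|}\Delta^{lf}_*(\overline{r^{-1}(\mathring{\tau})},\mathrm{Fr}\,r^{-1}(\mathring{\tau}))$ and of $s_*$, $r_*$ with $\id\otimes s$ and $\id\otimes r$ for the simplicial chain maps $s=(\id_X)_*$ and $r$, after which the chain-map property is inherited from the simplicial level; but setting up that tensor product carefully is the same work in disguise.
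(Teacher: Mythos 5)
Your plan follows the same route as the paper's proof: reduce to a single stellar subdivision and verify the identities $s_*d_C=d_{\rr_rSd_r\,C}\,s_*$ and $r_*d_{\rr_rSd_r\,C}=d_Cr_*$ entry by entry, with the terms containing two copies of $d_C$ cancelling via $(d_C)^2=0$ and the remaining single-$d_C$ terms pairing off with opposite signs; the paper executes this as a four-case analysis on the incidence of $\widehat{\sigma}$ and $r(\widehat{\sigma})$ with $\widetilde{\rho}$ for $s_*$, and an indicator-function computation for $r_*$. The difficulty is that your write-up stops exactly where the lemma begins. For a statement whose entire content is a finite but delicate cancellation, ``one must pair them off between the two composites \ldots the signs are arranged precisely so that the two members of each pair carry opposite signs'' asserts the conclusion rather than establishing it: the pairing is never exhibited, and the incidence cases genuinely behave differently (for instance, when $\widehat{\sigma}\in\widetilde{\rho}$ but $r(\widehat{\sigma})\notin\widetilde{\rho}$ the computation splits further according to whether $r(\widehat{\sigma}*\tildetilde{\sigma})=\sigma$, and the second clause of $s_*$ must absorb contributions from \emph{both} identity-type entries of $(\ref{dSdconcise})$). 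As it stands this is an accurate description of the proof, not a proof.

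The one reduction you do spell out, for $r_*$, is moreover incomplete. You only examine entries of $d_{Sd_r\,C}$ \emph{between} top-dimensional faces, but the identity $r_*d_{\rr_rSd_r\,C}=d_Cr_*$ must hold on every summand $Sd_r\,C(\widetilde{\nu})$ of $\rr_rSd_r\,C(\tau)$, in particular on those $\widetilde{\nu}$ containing both $\widehat{\sigma}$ and $r(\widehat{\sigma})$. On such a summand $r_*$ vanishes, yet $d_{Sd_r\,C}$ maps it by $(-1)^{n}\id$ and $(-1)^{n+1}\id$ into the two top-dimensional faces obtained by deleting $r(\widehat{\sigma})$ or $\widehat{\sigma}$, both of which $r_*$ then picks up; one needs these two contributions to cancel against each other, which is exactly the first pair of terms in the paper's verification of $r_*$ and is invisible to a ``top-dimensional to top-dimensional'' analysis. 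This would be recovered by rigorously identifying $r_*$ with $\id_{C(\tau)}\otimes r_{\tau,\tau}$ on the relative chain complex of $\overline{r^{-1}(\mathring{\tau})}$ rel its frontier, but, as you note yourself, setting that up is the same computation in disguise and it too is left undone.
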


\begin{proof}
In the following, for a statement $S$, $\mathbf{1}_{\{S\}}$ will denote the indicator function:\[ \mathbf{1}_{\{S\}} := \brcc{1,}{S\; \mathrm{true}}{0,}{S\;\mathrm{false}.}\] 
In verifying $s_*$ is a chain map we split into four cases:

\underline{$\widehat{\sigma}*r(\widehat{\sigma})\leqslant \widetilde{\rho}$}: Let $\widetilde{\rho} = \widehat{\sigma}*\tildetilde{\sigma}*\tildetilde{\rho}$ where $r(\widehat{\sigma})\in\tildetilde{\sigma}<\sigma$ and $\tildetilde{\rho}\in \link(\sigma,X)$. We consider the component \[(s_*d_C - d_{Sd_r\,C}s_*)_{\widetilde{\rho},\tau,n}\] and show it is zero for all $\tau$. Note that it is trivially zero unless $\tau = \sigma*\tildetilde{\tau}$ for some $\tildetilde{\rho}\leqslant \tildetilde{\tau}$ so we assume this in the following.

\begin{align}
(s_*d_C - d_{Sd_r\,C}s_*)_{\widetilde{\rho},\tau,n} 
&= \sum_{\tildetilde{\rho}\leqslant \tildetilde{\rho^\prime} \leqslant \tildetilde{\tau}}{(s_*)_{\widehat{\sigma}*\tildetilde{\sigma}*\tildetilde{\rho},\sigma*\tildetilde{\rho^\prime},n-1}(d_C)_{\sigma*\tildetilde{\rho^\prime},\sigma*\tildetilde{\tau},n}} \notag \\ 
&-\sum_{\begin{array}{c}{\tildetilde{\sigma}\leqslant \tildetilde{\sigma^\prime} < \sigma} \\ {\tildetilde{\rho}\leqslant \tildetilde{\rho^\prime} \leqslant \tildetilde{\tau}} \end{array}} (d_{Sd_r\, C})_{\widehat{\sigma}*\tildetilde{\sigma}* \tildetilde{\rho}, \widehat{\sigma}*\tildetilde{\sigma^\prime}*\tildetilde{\rho^\prime},n}(s_*)_{\widehat{\sigma}*\tildetilde{\sigma^\prime}*\tildetilde{\rho^\prime},\sigma* \tildetilde{\tau},n} \notag \\
&= \sum_{\begin{array}{c}{\tildetilde{\sigma}\leqslant \tildetilde{\sigma^\prime} = \sigma} \\ {\tildetilde{\rho}\leqslant \tildetilde{\rho^\prime} \leqslant \tildetilde{\tau}} \end{array}}(-1)^n (d_C)_{\tildetilde{\sigma}*\tildetilde{\rho},\sigma * \tildetilde{\rho^\prime}, \sigma*\tildetilde{\tau},n} -\sum_{\begin{array}{c}{\tildetilde{\sigma}\leqslant \tildetilde{\sigma^\prime} < \sigma} \\ {\tildetilde{\rho}\leqslant \tildetilde{\rho^\prime} \leqslant \tildetilde{\tau}} \end{array}}(-1)^{n+1}(d_C)_{\tildetilde{\sigma}*\tildetilde{\rho},\tildetilde{\sigma^\prime} * \tildetilde{\rho^\prime}, \sigma*\tildetilde{\tau},n} \notag \\ 
&= (-1)^n \sum_{\tildetilde{\sigma}*\tildetilde{\rho} \leqslant \rho \leqslant \sigma * \tildetilde{\tau}}(d_C)_{\tildetilde{\sigma}*\tildetilde{\rho},\rho,\sigma*\tildetilde{\tau},n} = (-1)^n(d_C^2)_{\tildetilde{\sigma}*\tildetilde{\rho},\sigma*\tildetilde{\tau}}=0. \notag
\end{align}

\underline{$\widehat{\sigma}\in \widetilde{\rho}$, $r(\widehat{\sigma})\notin \widetilde{\rho}$}: Let $\widetilde{\rho} = \widehat{\sigma}*\tildetilde{\sigma} * \tildetilde{\rho}$ with $r(\widehat{\sigma})\notin \tildetilde{\sigma}<\sigma$, $\tildetilde{\rho}\in \link(\sigma,X)$. Again for non-triviality we assume that $\tau = \sigma * \tildetilde{\tau}$ for $\tildetilde{\rho}\leqslant \tildetilde{\tau}$. Let $r^{-1}(\sigma)$ denote the simplex $\widehat{\sigma}*r(\widehat{\sigma})^\perp$ where $r(\widehat{\sigma})^\perp$ is the codimension subsimplex of $\sigma$ not containing $r(\widehat{\sigma})$.

\begin{align}
(s_*d_C - d_{Sd_r\,C}s_*)_{\widetilde{\rho},\tau,n} 
&= \mathbf{1}_{\{r(\widehat{\sigma}*\tildetilde{\sigma}) = \sigma\}}(d_C)_{\sigma*\tildetilde{\rho},\sigma*\tildetilde{\tau},n} \notag \\
&- (1 - \mathbf{1}_{\{r(\widehat{\sigma}*\tildetilde{\sigma})= \sigma\}})(d_{Sd_r\,C})_{\widehat{\sigma}*\tildetilde{\sigma}*\tildetilde{\rho},\widehat{\sigma}*r(\widehat{\sigma}) * \tildetilde{\sigma}*\tildetilde{\rho},n}(s_*)_{\widehat{\sigma}*r(\widehat{\sigma}) * \tildetilde{\sigma}*\tildetilde{\rho}, \sigma*\tildetilde{\tau}, n} \notag \\
&- \sum_{\sumlines{\tildetilde{\sigma}\leqslant \tildetilde{\sigma^\prime}\leqslant r(\widehat{\sigma})^\perp}{\tildetilde{\rho}\leqslant \tildetilde{\rho^\prime}\leqslant \tildetilde{\tau}}} (d_{Sd_r\,C})_{\widehat{\sigma}*\tildetilde{\sigma}*\tildetilde{\rho},\widehat{\sigma}*\tildetilde{\sigma^\prime}*\tildetilde{\rho^\prime},n}(s_*)_{\widehat{\sigma}*\tildetilde{\sigma^\prime}*\tildetilde{\rho^\prime},\sigma*\tildetilde{\tau},n} \notag \\
&= \mathbf{1}_{\{r(\widehat{\sigma}*\tildetilde{\sigma})= \sigma\}} (d_C)_{\sigma*\tildetilde{\rho},\sigma*\tildetilde{\tau},n} - (-1)^n(-1)^{n+1}(1 - \mathbf{1}_{\{r(\widehat{\sigma}*\tildetilde{\sigma})= \sigma\}})(d_C)_{r(\widehat{\sigma})*\tildetilde{\sigma}*\tildetilde{\rho},\sigma*\tildetilde{\tau},n} \notag \\
&- \sum_{\tildetilde{\rho}\leqslant \tildetilde{\rho^\prime}\leqslant \tildetilde{\tau}}(d_{Sd_r\,C})_{\widehat{\sigma}*\tildetilde{\sigma}*\tildetilde{\rho}, r^{-1}(\sigma)*\tildetilde{\rho^\prime},n}(s_*)_{r^{-1}(\sigma)*\tildetilde{\rho^\prime}, \sigma*\tildetilde{\tau},n} = 0. \notag
\end{align}
Where we note that $(d_C)_{\sigma*\tildetilde{\rho},\sigma*\tildetilde{\tau},n} = (d_C)_{r(\widehat{\sigma})*\tildetilde{\sigma}*\tildetilde{\rho},\sigma*\tildetilde{\tau},n}$ in the case that $r(\widehat{\sigma}*\tildetilde{\sigma}) = \sigma$ and the last term only contributes $-(d_C)_{r(\widehat{\sigma})*\tildetilde{\sigma}*\tildetilde{\rho}, \sigma*\tildetilde{\tau},n}$ from the $\tildetilde{\rho^\prime} = \tildetilde{\tau}$ term in the sum.

\underline{$\widehat{\sigma}\notin \widetilde{\rho}$, $r(\widehat{\sigma})\in \widetilde{\rho}$}: As before we write $\widetilde{\rho} = \tildetilde{\sigma} * \tildetilde{\rho}$ where $r(\widetilde{\sigma})\in \tildetilde{\sigma}<\sigma$ and $\tildetilde{\rho}\in \link(\sigma,X)$.

The only non-trivial $\tau$ to check are $\tau = \sigma * \tildetilde{\tau}$ for $\tildetilde{\rho}\leqslant \tildetilde{\tau}$ and $\tau = \tildetilde{\tau}\geqslant \widetilde{\rho}$. So suppose the former:

\begin{align}
(s_*d_C - d_{Sd_r\,C}s_*)_{\widetilde{\rho},\tau,n} 
&= (s_*)_{\tildetilde{\sigma}*\tildetilde{\rho}, \tildetilde{\sigma}*\tildetilde{\rho}, n-1}(d_C)_{\tildetilde{\sigma}*\tildetilde{\rho}, \sigma*\tildetilde{\tau},n} \notag \\
&- (d_{Sd_r\,C})_{\tildetilde{\sigma}*\tildetilde{\rho}, \widehat{\sigma}*\tildetilde{\sigma}*\tildetilde{\rho}, n}(s_*)_{\widehat{\sigma}*\tildetilde{\sigma}*\tildetilde{\rho}, \sigma*\tildetilde{\tau}, n} \notag \\
&- \sum_{\sumlines{\tildetilde{\sigma}\leqslant \tildetilde{\sigma^\prime}<\sigma}{\tildetilde{\rho}\leqslant \tildetilde{\rho^\prime}\leqslant \tildetilde{\tau}}}(d_{Sd_r\,C})_{\tildetilde{\sigma}*\tildetilde{\rho},\tildetilde{\sigma^\prime}*\tildetilde{\rho^\prime},n}(s_*)_{\tildetilde{\sigma^\prime}*\tildetilde{\rho^\prime},\sigma*\tildetilde{\tau},n} \notag \\
&= (d_C)_{\tildetilde{\sigma}*\tildetilde{\rho}, \sigma*\tildetilde{\tau},n} -0 \notag \\
&- (d_{Sd_r\,C})_{\tildetilde{\sigma}*\tildetilde{\rho},r^{-1}(\sigma)*\tildetilde{\tau},n}(s_*)_{r^{-1}(\sigma)*\tildetilde{\tau},\sigma*\tildetilde{\tau},n} = 0. \notag
\end{align}
Here the second term contributes $0$ since $(s_*)_{\tildetilde{\sigma}*\tildetilde{\rho}, \tildetilde{\sigma}*\tildetilde{\rho}, n-1} \neq 0$ only if $\tildetilde{\sigma} = r(\widehat{\sigma})^\perp$ which is not possible as $r(\widehat{\sigma})\in \tildetilde{\sigma}$.

Suppose now the latter: $\tau = \tildetilde{\tau}\geqslant \widetilde{\rho}$. Then
\begin{align}
(s_*d_C - d_{Sd_r\,C}s_*)_{\widetilde{\rho},\tau,n} 
&= (s_*)_{\widetilde{\rho},\widetilde{\rho},n-1}(d_C)_{\widetilde{\rho},\tau,n} - \sum_{\widetilde{\rho}\leqslant \rho \leqslant \tau}(d_{Sd_r\,C})_{\widetilde{\rho},\rho,n}(s_*)_{\rho,\tau,n} \notag \\
&= (d_C)_{\widetilde{\rho},\tau,n} - (d_C)_{\widetilde{\rho},\tau,n} = 0. \notag
\end{align}

\underline{$\widehat{\sigma}\notin \widetilde{\rho}$, $r(\widehat{\sigma})\notin \widetilde{\rho}$}: For a non-trivial computation we assume that 
$\widetilde{\rho}\leqslant \tau \in X$, but $\tau$ may or may not also be a simplex in $X^\prime$ dependent on whether it contains $\sigma$ or not.

\begin{align}
(s_*d_C - d_{Sd_r\,C}s_*)_{\widetilde{\rho},\tau,n} 
&= (s_*)_{\widetilde{\rho},\widetilde{\rho},n-1}(d_C)_{\widetilde{\rho},\tau,n} \notag \\
&-\mathbf{1}_{\{\tau \in X^\prime\}} (d_{Sd_r\,C})_{\widetilde{\rho},\tau,n}(s_*)_{\tau,\tau,n} \notag \\
&-\mathbf{1}_{\{\tau = \sigma * \tildetilde{\tau} \}} (d_{Sd_r\,C})_{\widetilde{\rho},\widehat{\sigma}*r(\widehat{\sigma})^\perp *\tildetilde{\tau},n}(s_*)_{\widehat{\sigma}*r(\widehat{\sigma})^\perp *\tildetilde{\tau},\sigma * \tildetilde{\tau},n} \notag \\
&= (d_C)_{\widetilde{\rho},\tau,n} - \mathbf{1}_{\{\tau \in X^\prime\}}(d_C)_{\widetilde{\rho},\tau,n} - \mathbf{1}_{\{\tau \notin X^\prime\}}(d_C)_{\widetilde{\rho},\tau,n} = 0. \notag
\end{align}

Next we verify that $r_*$ chain map by showing that $(r_*d_{Sd_r\,C} - d_C r_*)_{\tau,\widetilde{\rho},n} = 0$ for all $\tau\in X$, $\widetilde{\rho}\in X^\prime$:
\begin{align}
 (r_*d_{Sd_r\,C} - d_C r_*)_{\tau,\widetilde{\rho},n} =&\; \mathbf{1}_{\{\widehat{\sigma}r(\widehat{\sigma})\in \widetilde{\rho}\}}\mathbf{1}_{\{r(\widetilde{\rho}) = \tau = r(\widehat{\sigma})*\tildetilde{\tau}\}}(r_*)_{\tau,\tau,n-1}(d_{Sd_r\,C})_{\tau,\widetilde{\rho},n} \notag \\
 &+ \mathbf{1}_{\{\widehat{\sigma}r(\widehat{\sigma})\in \widetilde{\rho}\}}\mathbf{1}_{\{r(\widetilde{\rho}) = \tau = r(\widehat{\sigma})*\tildetilde{\tau}\}}(r_*)_{\tau,\widehat{\sigma}*\tildetilde{\tau}}(d_{Sd_r\,C})_{\widehat{\sigma}*\tildetilde{\tau},\widetilde{\rho},n} \notag \\
 &+ \mathbf{1}_{\{\tau \leqslant r(\widetilde{\rho})\}} \bigcup_{\widetilde{\rho}^\prime \leqslant \widetilde{\rho}}\mathbf{1}_{\{\widehat{\sigma}r(\widehat{\sigma})\notin \widetilde{\rho}\}} (r_*)_{\tau,\widetilde{\rho}^\prime,n-1}(d_{Sd_r\, C})_{\widetilde{\rho}^\prime, \widetilde{\rho},n} \notag \\
 &+ \mathbf{1}_{\{\tau \leqslant r(\widetilde{\rho})\}}\mathbf{1}_{\{\widehat{\sigma}r(\widehat{\sigma})\notin \widetilde{\rho}\}}(d_C)_{\tau,r(\widetilde{\rho}),n}(r_*)_{r(\widetilde{\rho}),\widetilde{\rho},n} \notag \\
 =& \; \mathbf{1}_{\{\widehat{\sigma}r(\widehat{\sigma})\in \widetilde{\rho}\}}\mathbf{1}_{\{r(\widetilde{\rho}) = \tau\}} \left( (-1)^{n+1}\id_{C(\tau)} + (-1)^n \id_{C(\tau)}\right) \notag \\
 &+ \mathbf{1}_{\{\tau \leqslant r(\widetilde{\rho})\}}\mathbf{1}_{\{\widehat{\sigma}r(\widehat{\sigma})\notin \widetilde{\rho}\}} \left((d_C)_{\tau,r(\widetilde{\rho}),n} - (d_C)_{\tau,r(\widetilde{\rho}),n} \right) \notag \\
 =& \; 0 \notag
\end{align}
\end{proof}

\bibliographystyle{hep}
\bibliography{spirosbib}{}
\end{document}